\theoremstyle{plain}
\newtheorem{theorem}{Theorem}
\newtheorem{corollary}[theorem]{Corollary}
\newtheorem{lemma}[theorem]{Lemma}
\newtheorem{conjecture}[theorem]{Conjecture}
\theoremstyle{definition}
\newtheorem{definition}[theorem]{Definition}
\newtheorem{example}[theorem]{Example}
\newcommand{\arc}[1]{{%
  \setbox9=\hbox{#1}%
  \ooalign{\resizebox{\wd9}{\height}{\texttoptiebar{\phantom{A}}}\cr#1}}}
\title{\textbf{Algebraic Properties of Euclidean Geometry with Transcendental Curves}}
\author{Nicole Venner\thanks{The author thanks John Carter for his wonderful advice on this project.}}
\date{January 1, 2023}
\begin{document}
\maketitle

\begin{abstract}
	While geometry with transcendental curves, like the Quadratrix of Hippias and the Spiral of Archimedes, played a significant role in our modern developments of geometry and algebra. The investigation has fallen off in the modern era despite advancements in algebraic tooling. This paper \footnote{Discussion of original results begins with Definition \vref{def:beginningofresults}} gives a description of the fields using modern techniques such as Galois theory while solving an open conjecture in a 1988 paper \cite{gleason_angle_1988} to provide an answer to if these curves can solve the problem of doubling the cube.
\end{abstract}

\section*{A Quick Review of Geometries}
Over 2000 years ago, the Ancient Greeks developed a system of mathematics in response to the discovery that some numbers such as $\sqrt{2}$ were not constructible as proportions of integers. 
\begin{definition}[Euclidean Geometry]
	A point $\alpha$ is \textbf{constructible} if and only if $\alpha$ can be constructed as a point using an unmarked ruler and compass. More specifically those tools allow you to:
	\begin{enumerate}
		\item Draw a line through any two points.
		\item Draw a circle centered at one point while passing through another point.
		\item For any 2 curves (either a line or a circle), you can draw points wherever they intersect.
	\end{enumerate}  
\end{definition}
An extremely important step in the understanding of problems in euclidean geometry is the invention of thinking of points on the plane as pairs of real numbers and more recently as complex numbers $a+bi$. Thinking of individual points as numbers allow you to prove some very interesting theorems:
\begin{theorem}
	Given any 2 constructible points $a, b\in \mathcal{C}$ represented as complex numbers. Then the points with complex representations of $a+b$, $a-b$, $ab$ and $\frac{a}{b}$ are constructible.
\end{theorem}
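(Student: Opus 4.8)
The plan is to establish each of the four closure properties by giving an explicit ruler-and-compass construction, working in the identification of the plane with $\mathbb{C}$. First I would fix the background data: the points $0$ and $1$ are constructible by assumption (they are needed just to have a coordinate system and a unit length), and from two given points one can construct the line through them and, by erecting perpendiculars and transferring lengths with the compass, one can construct parallels to a given line through a given point. These elementary sub-constructions (perpendicular bisector, dropping a perpendicular, copying a segment, copying an angle) are the standard toolkit, and I would state them once as available primitives rather than re-deriving them.

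Next I would handle $a+b$ and $a-b$, which are the easy cases: addition of complex numbers is vector addition, so $a+b$ is the fourth vertex of the parallelogram with vertices $0$, $a$, $b$, constructed by drawing the parallel to $0a$ through $b$ and the parallel to $0b$ through $a$ and intersecting them; negation $-b$ is the second intersection of the line $0b$ with the circle centered at $0$ through $b$, and then $a-b = a + (-b)$. For $ab$ and $a/b$ I would pass to polar form, writing $a = r(\cos\theta + i\sin\theta)$ and $b = s(\cos\phi + i\sin\phi)$: multiplying means multiplying the moduli and adding the arguments. Adding arguments is done by copying the angle $\phi$ onto the ray at angle $\theta$; multiplying the lengths $r$ and $s$ is done by the classical similar-triangles (intercept theorem) construction, which produces a segment of length $rs$ from segments of length $r$, $s$, and the unit $1$. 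Division is identical with the angle subtracted and the intercept theorem run in the other direction to produce $r/s$ (here one uses $b \neq 0$, so $s \neq 0$).

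I would present the multiplication argument carefully, since that is the only step with real content: I'd note that the map sending a point to its modulus and argument is realized constructibly (the modulus is the distance to $0$, laid off on the real axis; the argument is the angle with the positive real axis, which can be copied), that the intercept theorem construction of $rs$ from $1, r, s$ is valid because it only uses lines and parallels, and that reassembling modulus $rs$ with argument $\theta+\phi$ gives exactly the point $ab$ by the polar multiplication formula. I expect the main obstacle — or rather the main place where care is needed — to be purely expository: making sure that "construct a parallel" and "copy an angle" are justified from the three primitive operations in the definition rather than assumed, since the definition only literally grants lines through two points, circles, and intersections. Once those reductions are in hand, the four constructions are short, and the theorem follows.
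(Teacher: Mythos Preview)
Your proposal is correct and complete in outline; the constructions you describe are the standard ones and would establish the theorem. The paper's argument differs mainly in organization rather than substance: instead of treating all four operations on an equal footing, the paper first observes the algebraic reductions $a+b = a - ((a-a)-b)$ and $a\cdot b = a \div (1 \div b)$, so that it suffices to give ruler-and-compass constructions for subtraction and division only, and then dispatches those two cases by pointing to figures. Your route is more explicit about the underlying geometry (parallelogram for $\pm$, polar form with the intercept theorem and angle copying for $\times$ and $\div$), which has the advantage of being self-contained and making clear exactly which primitive constructions are invoked; the paper's route is more economical but leans on pictures for the actual content. Either way the mathematical core is the same similar-triangles argument for multiplying and dividing moduli, so the two proofs are close variants of one another.
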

\begin{proof}
	For any constructible numbers $a,b \in C$ that   $0=a-a$,and $1=a/a$ and $a+b=a-((a-a)-b)$ and $a\cdot b = \frac{a}{\frac{b}{b}\div b}$. You can subtract and divide complex numbers using ruler and compass constructions as shown in Figure \ref{fig:complexdiv} and \vref{fig:complexsub}:
	\begin{figure*}[h!]
		\center
		\includegraphics[width=0.5\textwidth]{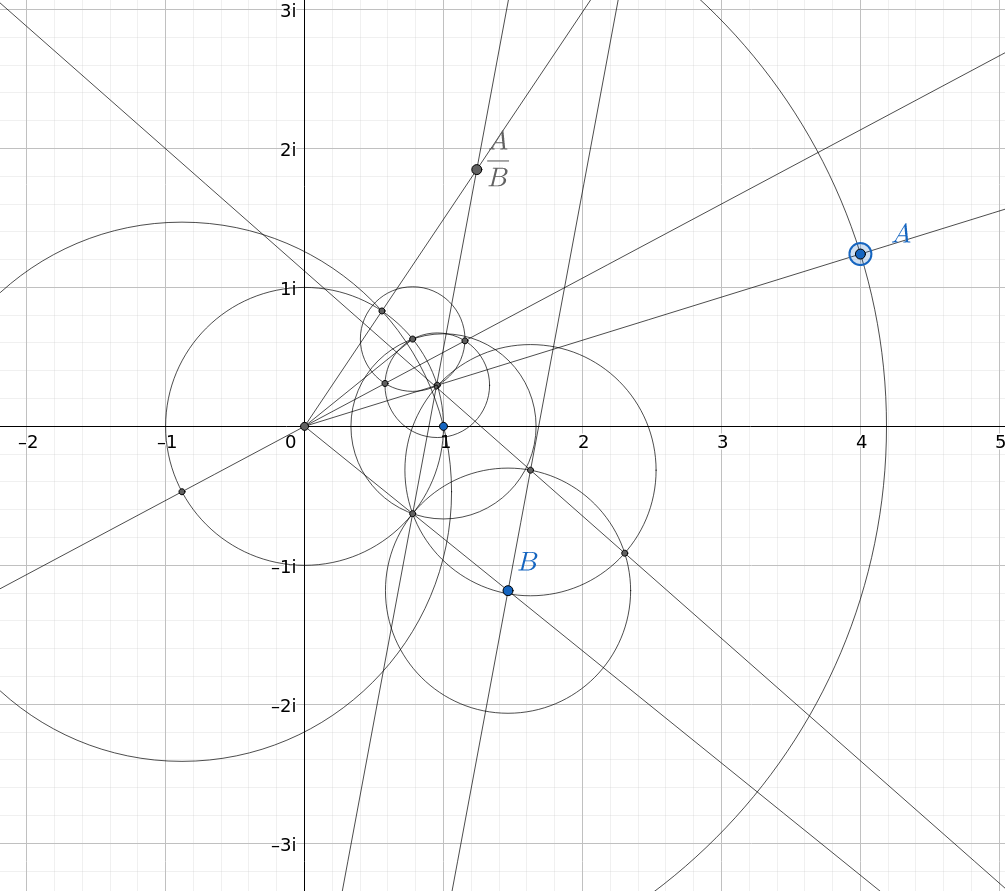}
		\caption[]{Complex Division}
		\label{fig:complexdiv}
	\end{figure*} 
	\begin{figure*}[h!]
		\center
		\includegraphics[width=0.5\textwidth]{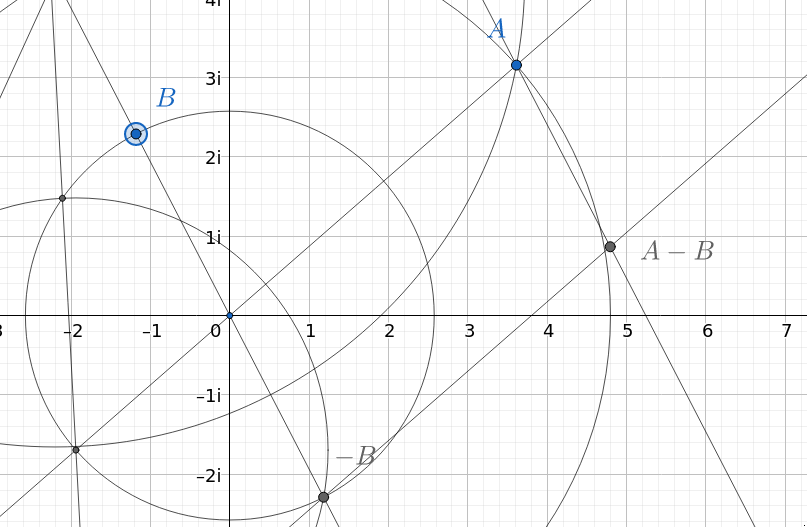}
		\caption[]{Complex Subtraction}
		\label{fig:complexsub}
	\end{figure*}
\end{proof}
\begin{lemma}
	For any numbers $a$ exist geometric constructions for: $\bar{a},\text{Re}(a),\text{Im}(a),|a|$ and $\sqrt{a}$
\end{lemma}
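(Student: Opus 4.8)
The plan is to reduce each of the five operations to a short combination of the field arithmetic already established in the previous Theorem together with three classical ruler-and-compass primitives: dropping a perpendicular from a point to a line, reflecting a point across a line, and constructing the altitude of a right triangle inscribed in a semicircle (the geometric-mean trick). Throughout I take $a \in \mathcal{C}$ to be a constructible number; the real axis is the line through the constructible points $0$ and $1$, the imaginary unit $i$ is the second intersection of the unit circle with the perpendicular to that line at $0$, and every rational real number --- in particular $\tfrac{1}{2}$ --- is constructible, so ``halving'' and ``multiplying by $i$'' are available as field operations via the Theorem.

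First I would handle $\bar a$: it is exactly the reflection of $a$ across the real axis, obtained by dropping the perpendicular from $a$ to that axis and marking off, on the far side of the foot, a point at the same distance (two compass arcs). From $\bar a$ the real and imaginary parts fall out algebraically, $\text{Re}(a) = \tfrac{1}{2}(a + \bar a)$ and $\text{Im}(a) = \tfrac{1}{2i}(a - \bar a)$, each a finite sequence of additions, subtractions, and divisions and hence constructible by the Theorem; equivalently, $\text{Re}(a)$ is the foot of the perpendicular from $a$ to the real axis, and $\text{Im}(a)$ is the foot of the perpendicular from $a$ to the imaginary axis transported by compass onto the real axis, which gives a purely synthetic alternative.

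For the modulus, $|a|$ is the intersection of the positive real axis with the circle centred at $0$ through $a$; alternatively $|a| = \sqrt{a\bar a}$ once square roots are in hand, so it suffices to construct $\sqrt{\cdot}$. For $\sqrt a$ I would argue in polar form: writing $a = r(\cos\theta + i\sin\theta)$ with $r = |a| > 0$, a square root is $\sqrt r\,(\cos\tfrac{\theta}{2} + i\sin\tfrac{\theta}{2})$. The length $\sqrt r$ is built from $r$ and $1$ as the geometric mean using a Thales semicircle erected on a segment of length $r+1$, and the angle $\tfrac{\theta}{2}$ is produced by bisecting the angle between the ray $\overrightarrow{0a}$ and the positive real axis; laying off the length $\sqrt r$ along the bisected ray yields the desired point, with the degenerate case $a=0$ giving $\sqrt 0 = 0$.

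I expect this last item to be the only real obstacle. One must verify that the composition ``bisect the argument, then scale by the geometric mean of $|a|$ and $1$'' genuinely squares back to $a$, which follows from the angle-addition identities $\cos\theta = \cos^2\tfrac{\theta}{2} - \sin^2\tfrac{\theta}{2}$ and $\sin\theta = 2\sin\tfrac{\theta}{2}\cos\tfrac{\theta}{2}$ together with $(\sqrt r)^2 = r$; and one should remark that choosing either of the two bisecting rays yields one of the two complex square roots of $a$, which is all the statement requires. The remaining primitives (perpendicular, reflection, angle bisection, Thales semicircle) are standard and can be cited rather than drawn out in full.
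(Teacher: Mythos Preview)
Your proposal is correct and mirrors the paper's proof almost exactly: both obtain $|a|$ by intersecting the circle of radius $|a|$ with the real axis, both take $\text{Re}(a)=\tfrac{1}{2}(a+\bar a)$, and both construct $\sqrt{a}$ by combining Euclid's geometric-mean (Thales semicircle) with an angle bisection. The only cosmetic difference is the order: you build $\bar a$ first by reflection and then read off $|a|$, whereas the paper builds $|a|$ first and recovers $\bar a$ algebraically via $\bar a = |a|^{2}/a$.
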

\begin{proof}
	Constructing the $|a|$ is relatively easy since the construction involves just constructing a circle with $a$ as a point on the radius and finding the intersection with the real line. From there you can find $\bar{a}$ since $\bar{a}=\frac{|a|^2}{a}$. This also lets you say construct $\text{Re}(a)=\frac{a+\bar{a}}{2}$ and $\text{Im}(a)=a-\text{Re}(a)$. The construction involving square roots in the complex plane follows from the general square root construction from Euclid combined with an angle bisector.
\end{proof}

As the Ancient Greeks were first developing geometry three classical problems remained unsolved. 
\begin{enumerate}
	\item Construct a square with the same area as a circle.
	\item Construct a cube with double the volume of another cube.
	\item Trisect an angle. (In general arbitrary angle partitions)
	\item Construct a regular-sided heptagon (In general an any-sided regular polygon)
\end{enumerate}
Under the classical rules of euclidean geometry, all three of these problems are impossible and we shall spend the next few sections giving a brief overview of why. The key insight comes from the fact that a solution to each one of these problems must involve constructing certain numbers, namely $\sqrt{\pi}$, $\sqrt[3]{2}$ and $\sin(20^\circ)$. (Importantly $\sin(20)$ happens to be the root of $8x^3 - 6x + \sqrt{3} = 0$)

So what numbers can be constructed with Ancient Greek geometry? Since you are dealing with the geometry of a two-dimensional plane it makes sense to involve numbers that carry the same structure, namely complex numbers. From here you can define

It turns out that these are all the numbers you can construct with classical euclidean geometry, via the following theorem:
\begin{theorem}
	Any Intersections that can be created with classic euclidean geometry (line intersected with line, circle and line, or circle and circle), can be represented using roots of quadratic polynomials
\end{theorem}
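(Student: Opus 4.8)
The plan is to set up coordinates and track the algebraic degree introduced at each construction step. Suppose every point constructed so far has coordinates lying in some field $K$ (thinking of the plane as $\mathbb{R}^2$, or equivalently working in the corresponding subfield of $\mathbb{C}$). I would first record the two structural facts that drive the whole argument. First, a line through two points $(x_1,y_1),(x_2,y_2)$ with all coordinates in $K$ is the zero set of a linear equation $ax+by+c=0$ with $a,b,c\in K$. Second, a circle centered at $(x_0,y_0)\in K^2$ and passing through a point of $K^2$ is the zero set of an equation $x^2+y^2+dx+ey+f=0$ with $d,e,f\in K$ — crucially, the radius enters this equation only through its \emph{square}, which is a sum of squares of differences of elements of $K$, so no square root is needed to write the circle down.

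Next I would dispatch the three intersection types in order of increasing work. For line–line, the intersection is the solution of two linear equations over $K$, obtained by Gaussian elimination (or Cramer's rule), hence already has coordinates in $K$; this is the degree-one case, trivially "a root of a quadratic." For line–circle, I would solve the linear equation for one variable, say $y=\lambda x+\mu$ with $\lambda,\mu\in K$, and substitute into the circle equation; this collapses to a single quadratic polynomial in $x$ with coefficients in $K$, so the $x$-coordinate is a root of a quadratic over $K$, and then $y$ lies in $K(x)$, an extension of degree at most two.

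For circle–circle, the key observation is that both circle equations have the same quadratic part $x^2+y^2$, so subtracting one from the other annihilates the quadratic terms and leaves a linear equation with coefficients in $K$ — the radical axis of the two circles. Intersecting this line with either of the original circles then reduces the problem to the line–circle case already treated. Assembling the three cases shows that any intersection of two constructible curves yields points whose coordinates are roots of quadratic polynomials over the field generated by the coordinates of the data, which is exactly the assertion.

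I expect the only real subtlety to be careful bookkeeping rather than any deep difficulty: making precise the inductive framework (what "the field built so far" means, starting from a base such as $\{0,1\}$ and closing under the three operations), and checking that the degenerate configurations — parallel or coincident lines, tangent or concentric circles — either produce no new point or still fall under the stated conclusion. I would close by noting the sharper corollary this argument yields: the field of all constructible complex numbers is obtained from $\mathbb{Q}$ by a tower of extensions each of degree $2$, which is precisely the input needed for the impossibility results on doubling the cube and trisecting the angle mentioned above.
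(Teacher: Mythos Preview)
Your argument is correct, and the overall shape matches the paper's: parametrize lines and circles in Cartesian coordinates and check that each of the three intersection types yields at worst a quadratic. The genuine difference is in the circle--circle case. The paper writes each circle as $y=\sqrt{r^2-(x-a)^2}+c$, sets the two expressions equal, and grinds through two rounds of isolating and squaring to eliminate both radicals, arriving at an explicit quadratic in $x$. You instead write both circles in the form $x^2+y^2+dx+ey+f=0$ and subtract, so the quadratic part cancels and you are left with the radical axis, a line over $K$; this reduces circle--circle to the already-handled line--circle case. Your route is shorter and conceptually cleaner, and it makes transparent \emph{why} no degree higher than two can appear; the paper's brute-force computation, on the other hand, produces the actual coefficients of the resulting quadratic. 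You also supply the inductive field framework and the tower-of-degree-$2$ corollary explicitly, which the paper leaves implicit or defers to later sections.
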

\begin{proof}
	Since in the cartesian plane, all circles can be written as $(x-c)^2+(y-d)^2=r^2$ and lines as $y=ax+b$. Then you can algebraically represent the x and y coordinates of intersections as solutions to polynomials, the calculations for a line intersecting a line and a circle intersecting a circle are relatively straightforward, so all that is left is to work out the algebra in the case of a circle intersecting with another circle.  
	\begin{align*}
		\sqrt{r^2-(x-a)^2}+c&=\sqrt{p^2-(x-b)^2}+d\\
	  \sqrt{r^2-(x-a)^2}&=\sqrt{p^2-(x-b)^2}+(d-c)\\
		r^2-(x-a)^2&=\left(\sqrt{p^2-(x-b)^2}+(d-c)\right)^2\\
	  r^2-(x-a)^2&=p^2-(x-b)^2+2(d-c)\sqrt{p^2-(x-b)^2}+(d-c)^2\\
	  \frac{(r^2-p^2)+(x-b)^2-(x-a)^2-(d-c)^2}{2(d-c)}&=\sqrt{p^2-(x-b)^2}\\
	  \frac{(2b-2a)x+(b^2+r^2-p^2-a^2-d^2+2dc-c^2)}{2(d-c)}&=\sqrt{p^2-(x-b)^2}\\
	  \frac{\left((2b-2a)x+(b^2+r^2-p^2-a^2-d^2+2dc-c^2)\right)^2}{4(d-c)^2}&=p^2-(x-b)^2
	\end{align*}
	Expanding out yields:
	\begin{gather*}
	  \frac{2x^2(b-a)^2+4x(b-a)(b^2+r^2-p^2-a^2-d^2+2dc-c^2)+(b^2+r^2-p^2-a^2-d^2+2dc-c^2)^2}{4(d-c)^2}=(p^2-(x-b)^2)\\
	  2x^2(b-a)^2+4x(b-a)(b^2+r^2-p^2-a^2-d^2+2dc-c^2)+(b^2+r^2-p^2-a^2-d^2+2dc-c^2)^2=4(d-c)^2(-x^2+2xb-b^2+p^2)\\
	  2x^2\left((b-a)^2+2(d-c)^2\right)+4x\left((b-a)(b^2+r^2-p^2-a^2-d^2+2dc-c^2)-2b(d-c)^2\right)+(b^2+r^2-p^2-a^2-d^2+2dc-c^2)^2+4(d-c)^2(b^2-p^2)=0
	\end{gather*}
	Since the $x$ is solvable, we can obtain the $y$ value by substituting $x$ into either $ax+b=y$ or $(x-a)^2+(y-b)^2=r^2$ yielding a quadratic as well. 	
\end{proof}
Since any quadratic can be solved using the quadratic formula, we can construct every constructible number, using the operations we have defined so far $(+,-,\times, \div, \sqrt{})$ suffice to construct any constructible number. Having reduced the problem to algebra we can show it just isn't enough to solve those classic geometry problems.
\subsection*{The Foldable Numbers}

Knowing that it is only possible to construct solutions of quadratics, it seems natural to ask if other possible sets of constructions could create more numbers. A recent modern example is the trigonometric numbers: 
\begin{definition}
	A number $\alpha$ is in $\mathcal{O}$ if the point representing the complex number $\alpha$ is constructible using simple reduced\footnote{Every other kind of simple paper fold can be completed using repeated applications of operation 3} origami folds
	\begin{enumerate}
		\item Given two points P1 and P2, we can fold a crease line connecting them. 

		\item Given two lines, we can locate their point of intersection, if it exists. 
		\item Given two points P1 and P2 and two lines L1 and L2, we can, whenever possible, make a fold that places P1 onto L1 and also places P2 onto L2.
	\end{enumerate}
\end{definition}
This may seem like an oversimplification of origami, but helpfully a proof from \textcite{hull_origametry_2020} (originally proved by Lang in 2003) shows us that
\begin{theorem}[Lang]
	If we only allow one fold at a time and assume all our creases are straight lines. Then any folding operation can be described by our three origami operations
\end{theorem}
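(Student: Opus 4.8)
The plan is to follow Lang's enumeration argument. First I would make precise what a ``folding operation'' is: a single straight crease is just a line $\ell$ in the plane, and the associated fold is the reflection $F_\ell$ across it. An \emph{operation} is then a rule that selects $\ell$ (up to finitely many choices) out of already-constructed data — points and lines — by requiring that $F_\ell$ carry certain marked points onto certain marked points or lines, that $\ell$ pass through certain marked points, or that $\ell$ be the image of certain marked lines. So the theorem asserts that every such selection rule is one of the three listed moves, or can be realized by finitely many applications of them together with the line-intersection move (operation 2).

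Second, I would set up the degree-of-freedom bookkeeping that drives the classification. The space of lines in the plane is $2$-dimensional. Imposing ``$F_\ell(P)=Q$'' is two scalar equations, so it pins $\ell$ down uniquely (it is the perpendicular bisector of $\overline{PQ}$); imposing ``$F_\ell(P)\in L$'' is a single scalar equation, cutting the admissible creases down to a $1$-parameter family, classically the tangent lines to the parabola with focus $P$ and directrix $L$; ``$\ell$ passes through $P$'' is one scalar equation; and ``$F_\ell(L)=M$'' restricts $\ell$ to one of the two angle bisectors of $L$ and $M$. Since only two degrees of freedom are available, a nondegenerate operation imposes constraints of total codimension two. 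Running through the combinations — two point-onto-point conditions, one point-onto-point plus one through-a-point, two point-onto-line conditions, one point-onto-line plus one through-a-point, one line-onto-line, and so on, discarding the over-determined, under-determined, and geometrically empty cases — reproduces exactly the classical single-fold axioms $\mathrm{O1}$--$\mathrm{O7}$ (Huzita--Hatori).

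Third, I would show that each axiom on that list is subsumed by operations 1--3, manufacturing auxiliary points with operation 2 as needed: $\mathrm{O1}$ is operation 1 verbatim and $\mathrm{O6}$ is operation 3 verbatim, while $\mathrm{O2}$ (perpendicular bisector), $\mathrm{O3}$ (angle bisector), $\mathrm{O4}$ (perpendicular through a point), $\mathrm{O5}$ (point onto a line, crease through a given point), and $\mathrm{O7}$ (point onto a line, crease perpendicular to another line) each occur either as a degenerate configuration of $\mathrm{O6}$ — letting two of its points or lines coincide, or placing one in special or limiting position — or as a short composition of operations 1, 2, and $\mathrm{O6}$. These are routine coordinate verifications that I would collect in a table; the one mildly delicate point is that $\mathrm{O6}$ can have up to three solutions (a common tangent to two parabolas, equivalently a real root of a cubic), so each degeneration must be checked to land on the intended branch.

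The main obstacle is the completeness half of the second step: proving that the enumerated constraint-combinations are genuinely \emph{all} of them, so that no exotic single-fold operation hides outside the list. This is the technical core of Lang's theorem. It requires first arguing that the only admissible atomic constraints are the incidence conditions above — a single crease cannot, for instance, meaningfully enforce an independent metric relation between two separate point-images — and then carrying out a finite but fussy case analysis of which such constraints can be imposed simultaneously without leaving the crease family empty or still positive-dimensional, supplemented by the algebraic fact that two parabolas have at most three common tangents. Everything past that point, namely the reductions in the third step, is bookkeeping.
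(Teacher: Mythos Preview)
The paper does not actually prove this theorem: it is stated with attribution to Lang (via \textcite{hull_origametry_2020}) and no argument is given in the text. So there is no in-paper proof to compare your proposal against.

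That said, your outline is a faithful reconstruction of Lang's original argument: formalize a single-fold operation as a reflection line determined by incidence constraints on already-constructed points and lines, count degrees of freedom to enumerate the nondegenerate constraint sets, recover the Huzita--Hatori list $\mathrm{O1}$--$\mathrm{O7}$, and then observe that each of those seven is either one of the paper's three operations verbatim or a degeneration of $\mathrm{O6}$ (operation~3). Your identification of the ``completeness'' step --- arguing that the listed incidence constraints exhaust the atomic conditions a single crease can enforce --- as the genuine technical content is accurate; the rest is, as you say, bookkeeping. If you were to flesh this out, the main thing to tighten is the claim that $\mathrm{O5}$ and $\mathrm{O7}$ arise as degenerate or limiting cases of $\mathrm{O6}$: in practice Lang handles these by direct construction (auxiliary points via operations~1 and~2, then a non-limiting application of $\mathrm{O6}$), and a literal ``limiting position'' argument would need care since the construction must remain finitary.
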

Using these techniques it becomes possible to create solutions to cubic equations, consider the following construction for the cube root of 2 in Figure \vref{fig:cuberoot2}
\begin{figure*}[h!]
	\center
	\includegraphics[width=0.5\textwidth]{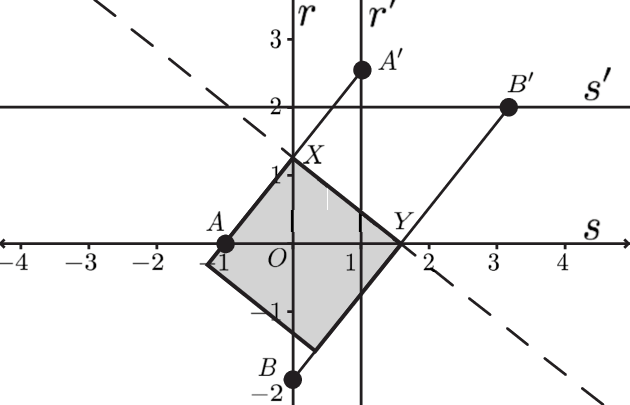}
	\caption{Construction of the cube root of 2 (Hull) (Maybe replace with own?)}
	\label{fig:cuberoot2}
\end{figure*}
This is done by moving the $x$ and $y$ axes to intersect at (1,2) and then using our third rule to map $(-1,0)$ and $(0,-2)$, to the new axes. From there it is possible to use conventional geometry and an argument from similar triangles to argue that the side length of the square is the cube root of 2.

This exact argument can be easily generalized to solving any cubic as first proved by Lil.

\begin{theorem} [Lil]
	The roots of any cubic can be expressed using simple paper folds.
\end{theorem}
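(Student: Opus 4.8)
The plan is to reduce an arbitrary cubic to the specific geometric configuration illustrated for $\sqrt[3]{2}$ and then run the similar-triangles argument in reverse. First I would normalize the cubic: given $ax^3+bx^2+cx+d=0$, by the already-established field operations (Theorem on $+,-,\times,\div$) we may divide through by $a$ and so assume a monic cubic $x^3+px^2+qx+r=0$ whose coefficients are themselves constructible/foldable numbers; a further substitution $x\mapsto x-p/3$ (again only field operations) produces a depressed cubic $x^3+mx+n=0$, so it suffices to fold a root of a depressed cubic with foldable coefficients $m,n$.

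Next I would set up Lill's circle/parabola-free geometric encoding directly. The cleanest route is to observe that a real root $t$ of $x^3+mx+n=0$ is exactly a common tangent-slope condition for two parabolas: writing the cubic as $x^3 = -mx-n$, the point $(t,t^2)$ lies on the parabola $y=x^2$ and simultaneously the line through a suitable pair of constructible points with slope determined by $t$ realizes $t^3+mt+n=0$. Concretely, following the Hull construction, I would place the axes (or equivalently two reference points $P_1,P_2$ and two reference lines $L_1,L_2$) built from $m$ and $n$ using only ruler-compass steps, then invoke origami operation~3 to make the single fold that sends $P_1\in L_1$ and $P_2\in L_2$; the crease line's position is governed by a cubic whose coefficients I will have arranged to be exactly $m$ and $n$. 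The key algebraic identity to check is that the "fold $P_1$ to $L_1$ and $P_2$ to $L_2$" condition, when the parabolas are $y = \tfrac12 x^2$-type conics with foci at $P_1,P_2$ and directrices $L_1,L_2$, is equivalent to a common tangent line whose slope satisfies the given depressed cubic — this is the similar-triangles computation alluded to in the $\sqrt[3]{2}$ case, now carried out with symbolic $m,n$ instead of the numbers $1,2$.

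Then I would verify the converse direction needed for completeness: every real root of the cubic actually arises from some valid fold, and conversely every valid fold of operation~3 in this configuration yields a real root, so the construction loses no solutions and introduces no spurious ones. Since a real cubic has at least one real root, operation~3 is "possible" in the sense required by its statement, so the construction never fails. The remaining (complex) roots are then recovered by polynomial division: having folded one real root $t$, the field operations plus the square-root construction from the earlier Lemma factor $x^3+mx+n = (x-t)(x^2+tx+(m+t^2))$ and solve the quadratic, so all three roots are foldable.

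The hard part will be pinning down the exact reference points and lines so that the cubic controlling the crease is literally $x^3+mx+n=0$ rather than some projectively-equivalent cubic, and doing the bookkeeping that shows each step only uses quantities already known to be foldable — in particular that translating and scaling the axes to the positions dictated by $m$ and $n$ is itself a ruler-compass (hence origami) operation on foldable inputs. I expect the similar-triangles/tangent-slope identity to be a short but delicate calculation; once it is in hand, the reduction to depressed form and the recovery of the other roots are routine applications of the field and square-root results established above.
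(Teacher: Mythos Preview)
The paper does not actually prove this theorem: it merely asserts that ``this exact argument can be easily generalized to solving any cubic as first proved by Lil'' and then cites Polster's video and Hull's book. There is no proof to compare your proposal against.

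Your proposal, by contrast, is a genuine and essentially correct proof outline. The reduction to a depressed monic cubic via field operations is fine, and the heart of the argument---that origami operation~3 produces a crease which is a common tangent to the two parabolas with foci $P_1,P_2$ and directrices $L_1,L_2$, and that the slope of such a common tangent satisfies a cubic whose coefficients you control by placing $P_i,L_i$ according to $m,n$---is exactly the standard Beloch/Lill argument. Your handling of the complex roots by factoring out the real root and applying the square-root lemma is also correct and is the right way to close the argument, since operation~3 only directly yields real solutions. The one place to be a bit careful is your claim that the reference data can be placed so the controlling cubic is \emph{literally} $x^3+mx+n$: in practice the crease-slope cubic comes out with an extra scaling or sign, and you absorb that into the choice of $P_1,P_2,L_1,L_2$ rather than getting it for free. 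This is routine but worth writing out explicitly.

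In short: you have supplied the proof the paper omits, and your route is the standard one; there is nothing different in the paper to contrast it with.
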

The author's first exposure to this was in a video produced by \textcite{polster_why_2019}. Furthermore using a similar argument from algebra it turns out to completely describe the numbers you can make with simple folds in origami. \footnote{There do exist extensions of origami involving multi-folds that create multiple creases at once. While these drastically expand the numbers that can be produced. The author believes there are significant issues in using them for real-life demonstrations or educational purposes. Not only since the constructions require a large amount of technical skill to execute, but inaccuracies in the construction are known to propagate and grow, making developing intuition quite difficult.}
\begin{theorem}
	The intersections created by any origami fold can be described by a cubic polynomial.
\end{theorem}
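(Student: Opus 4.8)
The plan is to reduce everything to the third origami operation. Operations~1 and~2 (creasing the line through two known points, and intersecting two known lines) only require solving linear systems, so they never leave the field $K$ generated over $\mathbb{Q}$ by the coordinates of the points constructed so far. Hence it suffices to show that each new point produced by operation~3 has coordinates satisfying a polynomial of degree at most $3$ over $K$.

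The key geometric observation is that a single fold is a reflection across a crease line $\ell$, and the requirement ``$\ell$ folds $P_1$ onto the line $L_1$'' says precisely that $\ell$ is the perpendicular bisector of the segment joining $P_1$ to some point $P_1' \in L_1$. As $P_1'$ ranges over $L_1$, these perpendicular bisectors are exactly the tangent lines to the parabola $\mathcal{P}_1$ with focus $P_1$ and directrix $L_1$: a point lies on such a bisector iff it is equidistant from $P_1$ and from $P_1'$, and the envelope of this one-parameter family of lines is the focus--directrix parabola. Likewise the folds carrying $P_2$ onto $L_2$ are the tangent lines to the parabola $\mathcal{P}_2$ with focus $P_2$ and directrix $L_2$. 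So a legal fold for operation~3 is exactly a \emph{common tangent line} to $\mathcal{P}_1$ and $\mathcal{P}_2$.

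Next I would parametrize a candidate crease as $y = mx + k$ (treating vertical creases as a separate, easier case) and impose tangency to $\mathcal{P}_1$. Since $\mathcal{P}_1$ is a conic with coefficients in $K$, this is a single polynomial equation in $(m,k)$ that is quadratic in $k$ with $K$-polynomial coefficients in $m$; similarly for $\mathcal{P}_2$. Eliminating $k$ between the two tangency conditions leaves one polynomial in $m$ over $K$. A crude resultant bound gives degree $4$, consistent with the fact that two conics have $4$ common tangents (duality plus B\'ezout); however, both parabolas are tangent to the line at infinity, so that ``common tangent'' is spurious here and factors out, leaving a polynomial of degree at most $3$ in $m$. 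Thus $m$ lies in an extension of $K$ of degree at most $3$, and then $k$ is a rational function of $m$ over $K$, so the crease $\ell$ is defined over a field $K' \supseteq K$ with $[K':K] \le 3$.

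Finally, every point produced by this fold — the image of a marked point under reflection in $\ell$, or the intersection of $\ell$ (or of a reflected line) with an existing line — is obtained from the data by rational operations, because reflection across a line defined over $K'$ is a $K'$-affine map and hence preserves $K'$-rationality. So the new coordinates lie in $K'$ and in particular are roots of a polynomial of degree at most $3$ over $K$, as claimed. The step I expect to be the main obstacle is the degree count in the elimination: making the ``tangent at infinity divides out'' reduction rigorous (cleanest in the dual projective picture), and checking the degenerate configurations — parallel directrices, a focus lying on its own directrix so the parabola collapses to a line, or $\mathcal{P}_1 = \mathcal{P}_2$ — where the common-tangent family must be reexamined by hand; in each such case the number of solutions only drops, so the cubic bound still holds.
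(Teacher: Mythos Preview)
Your argument is correct and is precisely the standard proof: reduce to operation~3, identify the admissible creases as common tangents to the two focus--directrix parabolas, and observe that the quartic count of common tangents to two conics drops to a cubic because every parabola is tangent to the line at infinity, so that shared tangent factors out. The paper itself gives no proof of this theorem at all --- it simply defers to \textit{Hull, Origametry} --- so there is nothing to compare against beyond noting that your sketch is exactly the argument one finds there. Your caveats about degenerate configurations (parallel directrices, focus on directrix, coincident parabolas) are the right ones to flag, and as you say they only reduce the degree.
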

Proof for this can be found in \textcite{hull_origametry_2020}

\subsection*{Galois Theory and Unconstructible Numbers}
From here since we have 2 notions of geometry that can be modeled as algebraically solving polynomial equations, it remains if it is possible to construct different numbers like $\sqrt[5]{2}$ or transcendental numbers like $\pi$ or $e$.

The original idea that these problems are impossible to construct in either comes from a branch of mathematics called Galois Theory, which we shall briefly stroll-through for the sake of brevity.
\begin{definition}
    A polynomial is irreducible if it cannot be factored.
\end{definition}
\begin{definition}
	A \textbf{Field} $\mathcal{F}$ is a set of numbers that is closed under the operations required for traditional arithmetic (addition, subtraction, multiplication, and division). A Field Extension is a process of extending a field $\mathcal{F}$ with new numbers to form a field $\mathcal{G}$, this extension is often written as $G/F$
\end{definition}
\begin{definition}
	A field $\mathcal{F}$ \textbf{splits} over a polynomial $p(x)$ if every root of $p(x)$ is contained in $\mathcal{F}$. An extension $\mathcal{F}/\mathcal{G}$ is a splitting extension if for any irreducible polynomial in $\mathcal{G}$ then either all or none of its roots are in $\mathcal{F}$.
\end{definition}
While this is a helpful definition, it might seem hard to come up with examples where this is true.
\begin{theorem}
	For any irreducible polynomial $p(x$ with coefficients $\mathcal{F}$, then the extension smallest field containing all roots $\mathcal{F}(x_1,x_2,\dots)/\mathcal{F}$ is splitting.
\end{theorem}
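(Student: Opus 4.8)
The plan is to prove the statement in the classical way, by lifting field isomorphisms along the roots of $p$. Write $F$ for the coefficient field and $K = F(x_1, x_2, \dots)$ for the smallest field containing all the roots of $p$; by the previous theorem $K$ is obtained from $F$ by adjoining finitely many algebraic elements, so $K/F$ is a finite extension. To verify that $K/F$ is a splitting extension in the sense defined above, I would fix an arbitrary irreducible $q \in F[x]$ that happens to have \emph{some} root $\alpha \in K$, and show that \emph{every} root of $q$ then lies in $K$. For this, fix a field $L \supseteq K$ in which $q$ splits (built by iterating the $F[t]/(q)$ construction of a root of an irreducible polynomial), and let $\beta \in L$ be any root of $q$.

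The heart of the argument is to produce an $F$-automorphism of $K$ carrying $\alpha$ to $\beta$. First, because $q$ is irreducible over $F$, both $F(\alpha)$ and $F(\beta)$ are isomorphic to $F[t]/(q)$ via $t \mapsto \alpha$ and $t \mapsto \beta$; composing these gives an isomorphism $\varphi : F(\alpha) \xrightarrow{\sim} F(\beta)$ with $\varphi(\alpha) = \beta$ and $\varphi|_F = \mathrm{id}_F$. Next observe that $K = K(\alpha)$, since $\alpha \in K$, and that $K$ is a splitting field of $p$ over $F(\alpha)$, while $K(\beta)$ is a splitting field of $p$ over $F(\beta)$ — here I use that $\varphi$ fixes $F$ and hence carries the polynomial $p$ (coefficients in $F$) to itself. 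Applying the isomorphism-extension theorem — a field isomorphism extends to an isomorphism of splitting fields of corresponding polynomials — the map $\varphi$ lifts to an isomorphism $\tau : K \xrightarrow{\sim} K(\beta)$ restricting to $\varphi$ on $F(\alpha)$.

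To finish: since $\tau$ extends $\varphi$ it fixes $F$ pointwise, so it sends each root $x_i$ of $p$ to another root of $p$, all of which lie in $K$ by definition of $K$; hence $\tau(K) \subseteq K$. But $\tau$ is an isomorphism of $K$ onto $K(\beta)$, so $K(\beta) = \tau(K) \subseteq K$, which forces $K(\beta) = K$ and therefore $\beta \in K$. As $\beta$ was an arbitrary root of $q$, all roots of $q$ lie in $K$, which is exactly the condition for $K/F$ to be a splitting extension. (Note that irreducibility of $p$ is never actually used; the same argument works for any $p \in F[x]$, and $p$ could even be replaced by the finite list of minimal polynomials of the $x_i$.)

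The one genuinely nontrivial ingredient — and thus the step I expect to be the main obstacle — is the isomorphism-extension theorem invoked to pass from $\varphi$ to $\tau$. Its own proof is an induction on $[K : F(\alpha)]$: one adjoins the roots of $p$ one at a time, and at each stage extends the isomorphism across a simple algebraic extension by matching the minimal polynomial of the newly adjoined root with its image under the current isomorphism, using that an irreducible factor maps to an irreducible factor. I would state and prove this lemma on its own immediately before this theorem, after which every remaining step above is bookkeeping. A secondary point worth stating explicitly is the existence of the ambient field $L$ in which both $p$ and $q$ split, which only needs the basic adjunction-of-a-root construction applied repeatedly.
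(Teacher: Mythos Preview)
Your argument is correct and is the standard proof that the splitting field of a polynomial is a normal extension: build the $F$-isomorphism $F(\alpha)\cong F(\beta)$ from irreducibility of $q$, lift it via the isomorphism-extension theorem to $\tau:K\xrightarrow{\sim}K(\beta)$, and then observe that $\tau$ must permute the roots of $p$ so that $\tau(K)\subseteq K$, forcing $\beta\in K$. Your parenthetical remark that irreducibility of $p$ plays no role is also right.

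There is nothing to compare against, however: the paper does not actually supply a proof of this theorem. The sentence immediately following the statement, ``this can be shown to not be the case,'' is not a proof sketch; it refers back to the preceding remark that ``it might seem hard to come up with examples where this is true,'' and the theorem is simply asserted as the reason such examples are plentiful. So your write-up would be filling a genuine gap in the exposition rather than paralleling or diverging from anything the paper does. If you intend to insert it, your plan to isolate the isomorphism-extension lemma beforehand is the right call, since the paper as written never states it.
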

this can be shown to not be the case.

Some examples and a short lemma might be helpful to understand the concept.
\begin{example}
	Our first example is going to be the smallest field containing both the rationals and $\sqrt{5}$, written as $\mathbb{Q}(\sqrt{5})$. It is reasonable to ask what the elements of this field look like, and it turns out it is completely describable using 2 rational numbers like so: $\mathbb{Q}(\sqrt{5})=\left\{a+b\sqrt{5}| a,b \in \mathbb{Q}\right\}$. These form a field since you can prove they are closed under subtraction and division like so:
	\begin{align*}
		(a+b\sqrt{5})-(c+d\sqrt{5})&=(a-b)+(b-d)\sqrt{5}\\
		\frac{a+b\sqrt{5}}{c+d\sqrt{5}}&=\left(\frac{ac-5bd}{c^2-5d^2}\right)+\left(\frac{bc-ad}{c^2-5d^2}\right)\sqrt{5}
	\end{align*}
	We can also see that this field is the splitting field for the polynomial $x^2-5$ since both of its roots of $\sqrt{5}$ and $-\sqrt{5}$ are contained in it.
\end{example}
\begin{lemma}
	If a field $\mathcal{F}$ is composed of a tower of fields $\mathbb{Q} \subset \mathcal{F}_1\subset \mathcal{F}_2\subset \mathcal{F}_3 \subset \dots$, where each extension is splitting. Then $\mathcal{F}/\mathbb{Q}$ is splitting.
	\label{lem:splittower}
\end{lemma}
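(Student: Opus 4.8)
The plan is to induct on the height of the tower after first reducing the (possibly infinite) tower to a finite one. Let $p(x)$ be irreducible over $\mathbb{Q}$ with some root $\alpha\in\mathcal{F}$. Since every element of $\mathcal{F}$ already lies in one of the finite stages, $\alpha\in\mathcal{F}_n$ for some $n$, so it suffices to show that each finite subtower $\mathbb{Q}=\mathcal{F}_0\subset\mathcal{F}_1\subset\cdots\subset\mathcal{F}_n$ gives a splitting extension $\mathcal{F}_n/\mathbb{Q}$; then all roots of $p$ lie in $\mathcal{F}_n\subseteq\mathcal{F}$ and we are done. So assume the tower is finite of length $n$ and induct on $n$, the case $n=0$ being vacuous.

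For the inductive step I would pass to the automorphism description of splitting extensions: $\mathcal{G}/\mathcal{K}$ is splitting exactly when every embedding of $\mathcal{G}$ into a fixed algebraic closure of $\mathcal{K}$ that fixes $\mathcal{K}$ pointwise maps $\mathcal{G}$ onto itself (this I would derive from the Theorem above on splitting fields, by writing $\mathcal{G}$ as $\mathcal{K}$ adjoined with the roots of a family of polynomials). Granting it, take an embedding $\sigma:\mathcal{F}_n\hookrightarrow\overline{\mathbb{Q}}$ fixing $\mathbb{Q}$; applying the inductive hypothesis to $\sigma|_{\mathcal{F}_{n-1}}$ gives $\sigma(\mathcal{F}_{n-1})=\mathcal{F}_{n-1}$, so $\sigma$ carries the tower $\mathbb{Q}\subset\mathcal{F}_{n-1}\subset\mathcal{F}_n$ to $\mathbb{Q}\subset\mathcal{F}_{n-1}\subset\sigma(\mathcal{F}_n)$ with the same first two floors. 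It then remains to conclude $\sigma(\mathcal{F}_n)=\mathcal{F}_n$ from the fact that $\mathcal{F}_n$ is a splitting extension of $\mathcal{F}_{n-1}$.

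That last step is where I expect the real trouble, because being a splitting extension is \emph{not} transitive along towers: in $\mathbb{Q}\subset\mathbb{Q}(\sqrt{2})\subset\mathbb{Q}(\sqrt[4]{2})$ both steps are splitting (each has degree two) yet the composite is not, since the non-real fourth roots of $2$ are missing. In the argument above the precise obstruction is that $\sigma|_{\mathcal{F}_{n-1}}$ need not be the identity on $\mathcal{F}_{n-1}$ — it is only \emph{some} automorphism — so the splitting hypothesis for $\mathcal{F}_n/\mathcal{F}_{n-1}$, which governs only embeddings that fix $\mathcal{F}_{n-1}$ pointwise, does not apply directly. To make the lemma go through I would therefore read (or strengthen) the hypothesis to one of: each $\mathcal{F}_i/\mathbb{Q}$ is splitting, in which case the finite-subtower reduction already finishes the proof with no induction needed, since $\alpha$ and all its conjugates lie in the splitting extension $\mathcal{F}_i\subseteq\mathcal{F}$; or each step adjoins all the roots of a polynomial with coefficients in $\mathbb{Q}$ rather than merely in $\mathcal{F}_i$, in which case $\mathcal{F}_n$ is the splitting field over $\mathbb{Q}$ of the product of those polynomials and the conclusion is immediate from the quoted Theorem. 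Pinning down which of these versions is actually in force — and which is the one needed for the constructibility arguments to come — is the point I would be most careful about.
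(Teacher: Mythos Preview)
The paper does not actually prove this lemma: the entire ``proof'' in the source is the single line ``As proved in \cite{dummit_abstract_2009}.'' So there is nothing to compare your argument against at the level of technique.

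More importantly, your analysis is sharper than the paper's treatment, because you have correctly identified that the lemma \emph{as literally stated} is false. With the paper's own definition of a splitting extension (every irreducible polynomial over the base has all or none of its roots upstairs --- i.e.\ normality), transitivity along towers fails, and your counterexample $\mathbb{Q}\subset\mathbb{Q}(\sqrt{2})\subset\mathbb{Q}(\sqrt[4]{2})$ is exactly the standard one: both steps are degree-two normal extensions, yet $x^4-2$ is irreducible over $\mathbb{Q}$ with one real root in $\mathbb{Q}(\sqrt[4]{2})$ and two non-real roots outside it. Your diagnosis of \emph{where} the inductive argument breaks --- that $\sigma|_{\mathcal{F}_{n-1}}$ is only an automorphism of $\mathcal{F}_{n-1}$, not the identity, so the normality of $\mathcal{F}_n/\mathcal{F}_{n-1}$ does not control $\sigma$ --- is precisely right.

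Your two proposed repairs are also the correct ones. For the applications the paper has in mind (towers of quadratic extensions for the constructible numbers, and later towers built from splitting fields of specific polynomials over $\mathbb{Q}$), the intended hypothesis is almost certainly your second reading: each $\mathcal{F}_{i+1}$ is obtained from $\mathcal{F}_i$ by adjoining \emph{all} roots of a polynomial with coefficients in $\mathbb{Q}$ (or at least in some fixed base field), so that $\mathcal{F}_n$ is the splitting field over $\mathbb{Q}$ of a product of rational polynomials and normality over $\mathbb{Q}$ is immediate. That is presumably what the cited passage in Dummit--Foote actually says. You would be justified in flagging the statement as needing this correction.
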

As proved in \cite{dummit_abstract_2009}.
\begin{definition}
	A \textbf{field automorphism} is an invertible function on a field such that.
	\begin{equation*}
		f(a-b)=f(a)-f(b) \qquad f\left(\frac{a}{b}\right)=\frac{f(a)}{f(b)}
	\end{equation*}
	Based on those properties we can conclude that $f(a)\cdot f(b)=f(ab)$ and $f(a)+f(b)=f(a+b)$. And for any rational $q$ then $f(q)=q$.
\end{definition}
Through the power of field automorphisms, we can prove some useful results:
\begin{lemma}
	If $f$ is a field automorphism defined on a splitting field $\mathcal{F}$ defined by a polynomial with rational coefficients $p(x)$, for any root $z$, then $f(z)$ is also a root.
\end{lemma}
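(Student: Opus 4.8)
The plan is to use the fact that a field automorphism commutes with every operation that appears in evaluating a polynomial, together with the fact that it fixes every rational. Write $p(x) = a_n x^n + a_{n-1}x^{n-1} + \dots + a_1 x + a_0$ with each $a_i \in \mathbb{Q}$, and suppose $z \in \mathcal{F}$ satisfies $p(z) = 0$. The goal is to show $p(f(z)) = 0$.

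First I would record the auxiliary facts that are already available from the definition of a field automorphism. From $f(a-b) = f(a) - f(b)$ we get $f(0) = f(a-a) = f(a) - f(a) = 0$. The excerpt has already derived $f(a+b) = f(a) + f(b)$ and $f(ab) = f(a)f(b)$, so by an immediate induction $f$ commutes with any finite sum and, in particular, $f(z^k) = f(z)^k$ for every $k \ge 0$. Finally, the excerpt also records that $f(q) = q$ for every rational $q$, which applies to each coefficient $a_i$.

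With those in hand the argument is a single computation: apply $f$ to $p(z) = 0$. The right-hand side becomes $f(0) = 0$. The left-hand side, $f(a_n z^n + \dots + a_1 z + a_0)$, splits across the sum into $f(a_n z^n) + \dots + f(a_1 z) + f(a_0)$; each term $f(a_i z^i)$ becomes $f(a_i)\,f(z^i) = a_i\, f(z)^i$ using multiplicativity, the power identity, and $f(a_i) = a_i$. Summing these gives exactly $a_n f(z)^n + \dots + a_1 f(z) + a_0 = p(f(z))$, so $p(f(z)) = 0$ and $f(z)$ is a root of $p$.

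There is no real obstacle here; the only thing needing care is making the "$f$ distributes over a finite sum" and "$f(z^k) = f(z)^k$" steps precise, since the definition only supplies the two-argument identities — but those inductions are routine once the additive and multiplicative laws are in hand. I would also remark that the hypothesis that $\mathcal{F}$ is a \emph{splitting} field is not actually used in this direction: all that matters is that $z$, and hence $f(z)$, lie in $\mathcal{F}$. The splitting hypothesis only becomes relevant afterwards, when one wants to conclude that $f$ restricts to a permutation of the full set of roots.
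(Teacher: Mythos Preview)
Your proof is correct and follows essentially the same approach as the paper: apply $f$ to the equation $p(z)=0$, distribute across the sum, use $f(a_i)=a_i$ for rational coefficients and $f(z^k)=f(z)^k$, and read off $p(f(z))=0$. Your additional remarks---that the finite-sum and power identities require a routine induction, and that the splitting hypothesis is not actually needed for this direction---are valid observations that the paper leaves implicit.
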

\begin{proof}
	If $z$ is a root of $p(x)$, then we know by definition that
	\begin{equation*}
		0=a_n z^n+a_{n-1}x^{n-1} + \dots + a_1 x^n +a_0
	\end{equation*}
	We can now apply $f$ to both sides of this equation, and since by the properties of a field automorphism we know that you can distribute across integer powers $f(a^n)=f(a)^n$ (proven by using induction with multiplication) we can finish the proof.
	\begin{align*}
		f(0)&=f \left(a_n z^n+a_{n-1}z^{n-1} + \dots + a_1 z +a_0\right)\\
		0&=f(a_n z^n)+f(a_{n-1}z^{n-1}) + \dots + f(a_1 z) +f(a_0)\\
		0&=a_n f(z^n)+a_{n-1}f(z^{n-1}) + \dots + a_1 f(z) +a_0\\
		0&=a_n f(z)^n+a_{n-1}f(z)^{n-1} + \dots + a_1 f(z) +a_0\\
	\end{align*}
	Thus $f(z)$ is a root of $p(x)$. (This applies generally to polynomials with irrational coefficients if the automorphism $f$ keeps the coefficients fixed in place.)
\end{proof}
\begin{corollary}
	If $\mathcal{F}$ is a splitting field of a polynomial $p(x)$ with rational coefficients, for any root $z$ of $p(x)$, then both the conjugate $\bar{z}$ as well as the real, imaginary parts $\Re(z)$ and $i \cdot \Im(z)$ are in $\mathcal{F}$,
\end{corollary}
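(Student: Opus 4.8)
The plan is to obtain all three claimed elements from a single, very concrete automorphism: complex conjugation, and then to finish with routine field arithmetic.

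First I would record that the map $c\colon \mathbb{C}\to\mathbb{C}$, $c(w)=\bar w$, is a field automorphism of $\mathbb{C}$ — it respects $-$ and $\div$, since $\overline{w_1-w_2}=\bar w_1-\bar w_2$ and $\overline{w_1/w_2}=\bar w_1/\bar w_2$ — and that it fixes every rational number (indeed every real number). Second, I would upgrade this to an automorphism of $\mathcal{F}$ itself. Because $\mathcal{F}$ is the splitting field of $p(x)$ over $\mathbb{Q}$, we may write $\mathcal{F}=\mathbb{Q}(z_1,\dots,z_n)$ where $z_1,\dots,z_n$ are \emph{all} the roots of $p$. Applying $c$ to $p(z_i)=0$ and using that the coefficients of $p$ are rational (hence fixed by $c$) gives $p(\bar z_i)=\overline{p(z_i)}=0$, so each $\bar z_i$ is again one of the $z_j$ and thus lies in $\mathcal{F}$. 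Hence $c(\mathcal{F})\subseteq\mathcal{F}$, and since $c\circ c=\mathrm{id}$ the restriction $c|_{\mathcal{F}}$ is a bijection, i.e. a field automorphism of $\mathcal{F}$ — which is exactly the situation in which the preceding lemma applies.

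Third, for any root $z$ of $p$ (which is one of the $z_i$), the second step gives $\bar z=c(z)\in\mathcal{F}$. Finally, since $\mathcal{F}$ is a field containing $\mathbb{Q}$, it is closed under the arithmetic operations, so from $z,\bar z\in\mathcal{F}$ we conclude
\[
\Re(z)=\frac{z+\bar z}{2}\in\mathcal{F},\qquad i\,\Im(z)=z-\Re(z)=\frac{z-\bar z}{2}\in\mathcal{F},
\]
which completes the argument. I would add the remark that this does \emph{not} claim $i\in\mathcal{F}$, only the product $i\,\Im(z)$; indeed $i$ may fail to lie in $\mathcal{F}$, as already happens for the splitting field of $x^3-2$.

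The main (really the only) obstacle is the second step: one must genuinely verify that $\mathcal{F}$ is stable under conjugation before treating $c$ as an automorphism of $\mathcal{F}$ and invoking the earlier lemma. This stability is precisely where the hypothesis that $\mathcal{F}$ is a \emph{splitting} field of a rational polynomial — so that it is generated over $\mathbb{Q}$ by the full set of conjugate roots — is used; without it the statement can fail. Everything downstream of that point is elementary.
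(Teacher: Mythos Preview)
Your argument is correct and follows essentially the same route as the paper: verify that complex conjugation is a field automorphism fixing $\mathbb{Q}$, invoke the preceding lemma to get $\bar z\in\mathcal{F}$, and then recover $\Re(z)$ and $i\,\Im(z)$ by field arithmetic. You are in fact more careful than the paper in explicitly checking that conjugation restricts to an automorphism of $\mathcal{F}$ (using that $\mathcal{F}$ is generated by the full set of roots), a point the paper leaves implicit.
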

\begin{proof}
	Consider that the field automorphism $f(a+bi)=a-bi$. We can prove that this is a field automorphism with some basic algebra
	\begin{align*}
		f(a+bi)-f(c+di)=a-bi-c+di=(a-c)-(b-d)i=f(a-c+(b-d)i)
	\end{align*}
	Furthermore
	\begin{align*}
		\frac{f(a+bi)}{f(c+di)}&=\frac{a-bi}{c-di}\\
		&=\frac{(a-bi)(c+di)}{c^2+d^2}\\
		&=\frac{ac+bd}{c^2+d^2}+i \frac{ad-bc}{c^2+d^2}\\
		&=f\left(\frac{ac+bd}{c^2+d^2}+i \frac{bc-ad}{c^2+d^2}\right)\\
		&=f\left(\frac{a+bi}{c+di}\right)
	\end{align*}
	
	any root $z=a+bi$ of a polynomial with rational coefficients, then consider the 
\end{proof}
Now 
\begin{theorem}
	Every field and field extension can be put into correspondence with a group, often written as $Gal(\mathcal{F})$ or $Gal(\mathcal{F}/\mathcal{G})$. Defined to be the group formed by all field automorphisms on your field. (The Galois Group of a field extension $\mathcal{F}/\mathcal{G}$ are the field automorphisms of $\mathcal{G}$ that keep the values of $\mathcal{F}$ unchanged.)
\end{theorem}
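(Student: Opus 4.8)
The plan is to make the phrase \textbf{``put into correspondence with a group''} precise and then verify the group axioms directly from the definition of a field automorphism. Concretely, for an extension $\mathcal{F}/\mathcal{G}$ I would \emph{define} $Gal(\mathcal{F}/\mathcal{G})$ to be the set of all field automorphisms $f\colon\mathcal{F}\to\mathcal{F}$ with $f(x)=x$ for every $x\in\mathcal{G}$, equipped with composition of functions as its binary operation; when no base field is named, $Gal(\mathcal{F})$ means $Gal(\mathcal{F}/\mathbb{Q})$, which is the same as taking automorphisms over the prime subfield, since by the last remark in the definition of a field automorphism every rational is automatically fixed. The ``correspondence'' is then just the rule assigning this set to the data $(\mathcal{F},\mathcal{G})$, and the mathematical content left to check is that this set really is a group.

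First I would check closure: if $f,g\in Gal(\mathcal{F}/\mathcal{G})$ then $g\circ f$ is again a bijection $\mathcal{F}\to\mathcal{F}$, it fixes $\mathcal{G}$ pointwise since $g(f(x))=g(x)=x$ for $x\in\mathcal{G}$, and it respects the field operations:
\begin{align*}
  (g\circ f)(a-b)&=g\big(f(a)-f(b)\big)=g(f(a))-g(f(b)),\\
  (g\circ f)\!\left(\tfrac{a}{b}\right)&=g\!\left(\tfrac{f(a)}{f(b)}\right)=\tfrac{g(f(a))}{g(f(b))},
\end{align*}
so $g\circ f\in Gal(\mathcal{F}/\mathcal{G})$. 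Associativity is immediate because composition of functions is always associative, and the identity map $\mathrm{id}_{\mathcal{F}}$ is visibly a field automorphism fixing $\mathcal{G}$, so it serves as the neutral element.

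The one step that needs an actual argument — and the place I expect the only real friction — is showing that $f^{-1}$ again lies in $Gal(\mathcal{F}/\mathcal{G})$. Its bijectivity is free, and it fixes $\mathcal{G}$ because $f$ does (from $f(x)=x$ we get $f^{-1}(x)=x$). To see it preserves the operations, write two arbitrary elements of $\mathcal{F}$ as $f(a)$ and $f(b)$ — possible since $f$ is onto — and compute
\begin{align*}
  f^{-1}\big(f(a)-f(b)\big)&=f^{-1}\big(f(a-b)\big)=a-b=f^{-1}(f(a))-f^{-1}(f(b)),\\
  f^{-1}\!\left(\tfrac{f(a)}{f(b)}\right)&=f^{-1}\!\left(f\!\left(\tfrac{a}{b}\right)\right)=\tfrac{a}{b}=\tfrac{f^{-1}(f(a))}{f^{-1}(f(b))},
\end{align*}
using the automorphism property of $f$ at the first equality of each line. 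Hence $f^{-1}\in Gal(\mathcal{F}/\mathcal{G})$, and all four group axioms hold, establishing the asserted correspondence. I would close by remarking that this fits the earlier lemmas: on a splitting field every element of $Gal$ permutes the roots of any defining polynomial, so the group of a splitting extension embeds into a finite symmetric group — but that refinement is not needed for the bare statement here, which is exactly the verification that the field automorphisms form a group under composition.
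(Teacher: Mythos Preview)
Your argument is correct: you carefully verify closure, associativity, identity, and inverses for composition of automorphisms fixing $\mathcal{G}$, which is exactly what is needed to justify calling $Gal(\mathcal{F}/\mathcal{G})$ a group.

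The paper, however, does not actually prove this statement at all. It is treated as a definition-cum-theorem: immediately after stating it the paper simply remarks that ``a true understanding of Galois theory is going to require a fair bit of understanding of the properties of groups'' and then illustrates the concept with the worked example of the splitting field of $x^3-2$. So there is nothing to compare your argument against --- you have supplied a genuine proof where the paper offers none. Your closing remark about the Galois group of a splitting extension embedding in a symmetric group is in the same spirit as the paper's example, which computes $Gal$ for $x^3-2$ and identifies it with $S_3$.
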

A true understanding of galois theory is going to require a fair bit of understanding of the properties of groups. But even then an example might be helpful:
\begin{example}
	Let $\mathcal{F}$ be the smallest field where $x^3-2$ splits over the rationals. Since $\mathcal{F}$ is the smallest field that contains all the roots of $x^3-2$ we can say $\mathcal{F}=\mathbb{Q}(\sqrt[3]{2},\omega\sqrt[3]{2},\omega^2\sqrt[3]{2})$, where $\omega^3=1$. It also does have a representation using rational numbers like so:
	\begin{equation*}
		\mathcal{F}=\left\{a+b\cdot 2^{1/3}+c\cdot 2^{2/3}+d\cdot\omega+e\cdot\omega 2^{1/3}+f\cdot \omega 2^{2/3} \quad | \quad a,b,c,d,e,f \in \mathbb{Q}\right\}
	\end{equation*}
	Since $\mathcal{F}$ is a splitting extension over the rationals it is relatively easy to calculate its Galois group. Since the rationals (or your preferred base field) must remain fixed by any field automorphism, by the properties of the automorphism for any root $z$, then $f(z)$ must also be a root. Likewise, since every element of your field can be written as an element of the base field $p_n$, times a product of roots to some polynomial $\alpha$, applying the permutation yields:
	\begin{equation*}
		f\left(\sum_{i=1}^k a_i \cdot \alpha_1^{n_{1_i}}\cdot \alpha_2^{n_{2_i}}\cdot \alpha_3^{n_{3_i}}\cdot \dots\right)=\sum_{i=1}^k a_i \cdot f(\alpha_1)^{n_{1_i}}\cdot f(\alpha_2)^{n_{2_i}}\cdot f(\alpha_3)^{n_{3_i}}\cdot \dots
	\end{equation*}
	Thus the field automorphisms are given by any permutation of the roots of the polynomial while keeping all of their algebraic properties intact.
	
	For our specific polynomial, the only algebraic properties the roots need to respect are:
	\begin{gather*}
		x_0^3=x_1^3=x_2^3=x_0\cdot x_1\cdot x_2=2\\
	\end{gather*}
	Thus any permutation leads to a valid automorphism. Showing that the Galois group of $x^3-2$ is the symmetric group $S_3$.
\end{example}
\begin{definition}
	The \textbf{order} of any (Galois) group $G$ written as $|G|$ is defined to be the number of elements (field automorphisms) in the group.
\end{definition}
\begin{theorem}
	For any fields $F\subset G$ with a splitting field extension $G/F$. Then
	\begin{equation*}
		|Gal(G/F)| \cdot |Gal(F)|=|Gal(G)|
	\end{equation*}
\end{theorem}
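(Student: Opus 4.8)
The plan is to exhibit a single group homomorphism from $Gal(G)$ onto $Gal(F)$ whose kernel is exactly $Gal(G/F)$, and then read off the orders. Throughout I interpret $Gal(F)$ as $Gal(F/\mathbb{Q})$ and $Gal(G)$ as $Gal(G/\mathbb{Q})$, and I take $F/\mathbb{Q}$ to be splitting as well, so that by Lemma \ref{lem:splittower} the whole tower $\mathbb{Q}\subset F\subset G$ is splitting and all three Galois groups are finite.

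First I would check that restriction to $F$ is well defined on $Gal(G)$. Because $F/\mathbb{Q}$ is splitting, $F$ is generated over $\mathbb{Q}$ by roots of polynomials with rational coefficients; by the lemma that an automorphism of a splitting field carries each root of such a polynomial to another of its roots, any $\sigma\in Gal(G)$ sends every one of these generators into $F$, so $\sigma(F)\subseteq F$, and since $\sigma$ is injective on the finite-dimensional $\mathbb{Q}$-vector space $F$ it restricts to an automorphism $\sigma|_F$ of $F$ fixing $\mathbb{Q}$. Thus $\rho\colon Gal(G)\to Gal(F)$, $\sigma\mapsto\sigma|_F$, is a group homomorphism, and straight from the definition $\ker\rho=\{\sigma:\sigma|_F=\mathrm{id}\}=Gal(G/F)$. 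By the first isomorphism theorem together with Lagrange's theorem, $|Gal(G)|=|\ker\rho|\cdot|\rho(Gal(G))|$, so the entire claim reduces to showing $\rho$ is surjective.

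The main obstacle is exactly this surjectivity: every $\tau\in Gal(F)$ must extend to an automorphism of $G$, which is the isomorphism extension theorem; one can cite \cite{dummit_abstract_2009} or argue directly as follows. Write $G$ as the splitting field over $\mathbb{Q}$, hence also over $F$, of some polynomial $p(x)$ with rational coefficients, so $G=F(\beta_1,\dots,\beta_k)$ with $\beta_1,\dots,\beta_k$ the roots of $p$. I would lift $\tau$ one root at a time up the tower $F\subset F(\beta_1)\subset\dots\subset G$: given an embedding $\tau_j$ of $F(\beta_1,\dots,\beta_j)$ into $G$ that extends $\tau$, let $m(x)$ be the minimal polynomial of $\beta_{j+1}$ over $F(\beta_1,\dots,\beta_j)$; since $m\mid p$ and every coefficient of $p$ is rational and hence fixed by $\tau_j$, applying $\tau_j$ to the coefficients of $m$ gives an irreducible polynomial that still divides $p(x)$, so it splits in $G$ and has a root $\beta'\in G$, and the assignment $\beta_{j+1}\mapsto\beta'$ extends $\tau_j$ to $F(\beta_1,\dots,\beta_{j+1})$. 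After $k$ steps this yields an injective $\mathbb{Q}$-linear map $G\to G$ extending $\tau$, and being injective on a finite-dimensional space it is onto, hence an automorphism of $G$ fixing $\mathbb{Q}$ whose restriction to $F$ is $\tau$. Therefore $\rho$ is surjective, $|\rho(Gal(G))|=|Gal(F)|$, and $|Gal(G/F)|\cdot|Gal(F)|=|Gal(G)|$ follows. The subtlety to keep in view is the bookkeeping of what each $\tau_j$ fixes: the inductive step needs $\tau_j$ to fix all of $\mathbb{Q}$ so that applying it to the coefficients of $p$ leaves $p$ unchanged, and this holds because $\tau_0=\tau\in Gal(F/\mathbb{Q})$ already fixes $\mathbb{Q}$ and each stage only adds the image of a single new generator.
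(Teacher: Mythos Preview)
Your argument is correct and is the standard restriction-homomorphism proof of the degree-multiplicativity for a tower of Galois extensions; you are also right to flag that one needs $F/\mathbb{Q}$ to be splitting for $Gal(F)$ to even be the right object and for the restriction map to land in $Gal(F)$. The paper itself does not supply a proof of this theorem at all: it simply refers the reader to \textcite{dummit_abstract_2009}, so there is nothing to compare your argument against beyond noting that what you wrote is essentially the argument one finds in that reference.
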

Furthermore
\begin{theorem}
	For any irreducible polynomial $p$ of degree $n$, the splitting field $F$ containing all the roots $F(p_1,p_2,\dots)$ then the order of $Gal(F(p_1,p_2,\dots)/F)$ is divisible by $n$ and divides $n!$.
\end{theorem}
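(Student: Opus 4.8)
The plan is to prove the two divisibility claims separately. Write $G = F(p_1,\dots,p_n)$ for the splitting field of $p$ over $F$, so that the group in question is $Gal(G/F)$, the automorphisms of $G$ fixing $F$ pointwise. Since we are working with subfields of $\mathbb{C}$, every extension in sight is separable and $G/F$ is a splitting (hence Galois) extension, so the structural lemmas proved above apply; note also that although those lemmas were stated for rational coefficients, the parenthetical remark there covers the case at hand, because the coefficients of $p$ lie in $F$ and every element of $Gal(G/F)$ fixes $F$. First I would establish $|Gal(G/F)| \mid n!$ by realizing $Gal(G/F)$ as a subgroup of the symmetric group $S_n$; then I would establish $n \mid |Gal(G/F)|$ by showing the action of $Gal(G/F)$ on the roots is transitive and applying the orbit–stabilizer theorem.

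For the upper bound, I would invoke the lemma that any $f \in Gal(G/F)$ carries each root $p_i$ of $p$ to a root of $p$, so $f$ permutes the finite set $\{p_1,\dots,p_n\}$. This yields a map $\rho : Gal(G/F) \to S_n$, and it is a group homomorphism because composing automorphisms corresponds to composing the induced permutations. The crucial point is that $\rho$ is injective: every element of $G$ is an expression in $p_1,\dots,p_n$ with coefficients in $F$ (this is precisely what it means for $G$ to be generated over $F$ by the roots), and $f$ fixes $F$, so $f$ is completely determined by the tuple $(f(p_1),\dots,f(p_n))$; hence $\rho(f) = \mathrm{id}$ forces $f = \mathrm{id}$. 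Therefore $Gal(G/F)$ is isomorphic to a subgroup of $S_n$, and Lagrange's theorem gives $|Gal(G/F)| \mid |S_n| = n!$. In particular $Gal(G/F)$ is finite, which is needed for the next step.

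For the lower bound, I would show that $Gal(G/F)$ acts transitively on $\{p_1,\dots,p_n\}$. Fix $\alpha = p_1$ and let $\beta = p_j$ be any root. Because $p$ is irreducible of degree $n$, it is the minimal polynomial of both $\alpha$ and $\beta$, so there is an isomorphism of fields $F(\alpha) \to F(\beta)$ that fixes $F$ and sends $\alpha \mapsto \beta$ (both are isomorphic to $F[x]/(p(x))$ over $F$). Now $G$ is simultaneously the splitting field of $p$ over $F(\alpha)$ and over $F(\beta)$, so by the isomorphism-extension theorem this isomorphism lifts to an automorphism $\sigma$ of $G$; since $\sigma$ restricts to the identity on $F$, we get $\sigma \in Gal(G/F)$ with $\sigma(\alpha) = \beta$. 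Hence the orbit of $\alpha$ is all of $\{p_1,\dots,p_n\}$, and the orbit–stabilizer theorem gives
\[
n = |\text{orbit of }\alpha| = \frac{|Gal(G/F)|}{|\mathrm{Stab}(\alpha)|},
\]
so $n \mid |Gal(G/F)|$, completing the argument.

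The step I expect to be the main obstacle is the lifting of the $F$-isomorphism $F(\alpha) \to F(\beta)$ to an automorphism of the whole splitting field $G$. This is the isomorphism-extension theorem, whose proof goes by induction on $[G:F(\alpha)]$: one adjoins a root of an irreducible factor of $p$ at a time and must check that these factorizations correspond correctly across the isomorphism, with separability (automatic in characteristic zero) ensuring the required roots exist in $G$. One must also verify that the resulting $\sigma$ genuinely fixes $F$, so that it lands in $Gal(G/F)$ and not merely in $\mathrm{Aut}(G)$ — but this is immediate, since the base isomorphism is the identity on $F$. By comparison, the $n!$ direction is routine once the "automorphisms permute the roots" lemma is in hand.
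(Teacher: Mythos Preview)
Your argument is correct and is the standard one: embed $Gal(G/F)$ into $S_n$ via the permutation action on the roots to get the upper bound, and use irreducibility plus the isomorphism-extension theorem to obtain transitivity, hence the lower bound via orbit--stabilizer. The paper itself does not give a proof of this theorem at all; it simply remarks that ``the proofs for these statements are a combination and simplification of several proofs in the Galois theory chapter of'' Dummit and Foote, so there is no in-paper argument to compare against. Your write-up is exactly the kind of proof that reference would supply, and your identification of the isomorphism-extension step as the only nontrivial ingredient is accurate.
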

For interested readers, the proofs for these statements are a combination and simplification of several proofs in the Galois theory chapter of \textcite{dummit_abstract_2009}. Using these 2 theorems we can show that the three problems are impossible. First starting by saying that
\begin{theorem}
	Every constructible number $c$ is contained in a field with a Galois group with an order of $2^n$ for some natural number $n$.
\end{theorem}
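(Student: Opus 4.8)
The plan is to place $c$ at the top of a tower of quadratic extensions of $\mathbb{Q}$ and then enlarge that tower, one level at a time, to a splitting (normal) tower, checking by induction that the order of the Galois group never acquires an odd prime factor.

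First I would use the work already done: since the operations $(+,-,\times,\div,\sqrt{})$ suffice to produce every constructible number, any constructible $c$ lies in a chain $\mathbb{Q}=F_0\subset F_1\subset\cdots\subset F_k$ with $c\in F_k$ and $F_{i+1}=F_i(\sqrt{\alpha_i})$ for some $\alpha_i\in F_i$, so that each step has degree $1$ or $2$. This is not yet enough: the $F_i$ need not be splitting extensions of $\mathbb{Q}$ (for example $\mathbb{Q}(\sqrt[4]{2})$ over $\mathbb{Q}(\sqrt{2})$), so $Gal(F_i)$ is not the well-behaved object the statement refers to, and repairing this is the heart of the argument.

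Next I would build a parallel tower $\mathbb{Q}=L_0\subset L_1\subset\cdots\subset L_k$ by setting
\[
	L_{i+1}=L_i\left(\sqrt{\tau(\alpha_i)} : \tau\in Gal(L_i/\mathbb{Q})\right),
\]
and prove by induction on $i$ that: (i) $L_i/\mathbb{Q}$ is a splitting extension; (ii) $F_i\subseteq L_i$; and (iii) $|Gal(L_i/\mathbb{Q})|$ is a power of $2$. For (i), the set $\{\tau(\alpha_i)\}$ is finite by (iii) applied to $L_i$, and $L_{i+1}$ is the splitting field over $L_i$ of $\prod_{\tau}(x^2-\tau(\alpha_i))\in L_i[x]$, so $L_{i+1}/L_i$ is splitting; Lemma \ref{lem:splittower} applied to $\mathbb{Q}\subseteq L_i\subseteq L_{i+1}$ then makes $L_{i+1}/\mathbb{Q}$ splitting. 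For (ii), $\alpha_i\in F_i\subseteq L_i$ by induction, so taking $\tau=\mathrm{id}$ gives $\sqrt{\alpha_i}\in L_{i+1}$, hence $F_{i+1}=F_i(\sqrt{\alpha_i})\subseteq L_{i+1}$. For (iii), $L_{i+1}$ is obtained from $L_i$ by adjoining finitely many square roots one after another, so $[L_{i+1}:L_i]$ is a product of factors each equal to $1$ or $2$ and therefore a power of $2$; since $L_{i+1}/L_i$ is splitting, $|Gal(L_{i+1}/L_i)|=[L_{i+1}:L_i]$ is a power of $2$ as well, and the tower-order theorem $|Gal(L_{i+1}/L_i)|\cdot|Gal(L_i/\mathbb{Q})|=|Gal(L_{i+1}/\mathbb{Q})|$ combined with the inductive hypothesis on $|Gal(L_i/\mathbb{Q})|$ yields (iii) for $L_{i+1}$.

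Taking $i=k$, the field $L_k$ then satisfies $c\in F_k\subseteq L_k$, is splitting over $\mathbb{Q}$, and has $|Gal(L_k/\mathbb{Q})|=2^n$ for some $n$, which is exactly the claim. I expect the main obstacle to be the normality bookkeeping in the inductive step: one cannot just take a quadratic tower and read off its Galois group, since an intermediate quadratic extension need not be normal over $\mathbb{Q}$, so the passage to all conjugates $\tau(\alpha_i)$ is genuinely necessary, and one must verify both that this closure remains normal over $\mathbb{Q}$ (so that Lemma \ref{lem:splittower} and the tower-order theorem apply) and that it only ever adjoins square roots (so that each new factor in the order of the Galois group is $1$ or $2$). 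The rest is routine.
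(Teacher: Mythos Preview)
Your argument is correct and, in fact, substantially more careful than the paper's. The paper's proof is a two-sentence sketch: it observes that a constructible number sits atop a tower of quadratic extensions of $\mathbb{Q}$ and then invokes the multiplicativity of Galois-group orders to conclude the total order is a power of $2$, without ever addressing whether the intermediate fields are normal over $\mathbb{Q}$ (so that the tower-order theorem actually applies). You identify precisely this gap and repair it by passing at each stage to a normal closure $L_i$, adjoining all $Gal(L_i/\mathbb{Q})$-conjugates of the radicand before taking the next square root; this is the standard device for making the multiplicativity theorem legitimately applicable, and your inductive verification of (i)--(iii) is sound. So the overall strategy --- build a quadratic tower and multiply degrees --- is the same as the paper's, but your version supplies the normality bookkeeping the paper simply omits, at the cost of a longer induction. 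What you gain is an honest proof; what the paper gains is brevity at the expense of rigor.
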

\begin{proof}
	Since every constructible number is created using a finite number of ruler and compass constructions, you can interpret it as extending the base field of the rationals using a finite number of solutions to quadratic polynomials. Combining the previous 2 theorems, we get that the order of the final field containing our number $c$ must be contained in a field with degree $2^n$.
\end{proof}
Likewise following the same argument one can see that

But notice that classic problems of doubling the cube and trisecting the angle involve extensions of irreducible cubic polynomials, $x^3-2$ and $8x^3 - 6x + \sqrt{3}$. Thus any 'nice' field containing them must be divisible by 3\footnote{If you calculate them out it turns out that the first polynomial has a Galois group of order 6 and the second has order 3.} But since every number constructible with classic euclidean geometry has a degree with no factors of 3. 

We can further see that the field of foldable numbers can solve these since:

From here we have one final result 
\begin{corollary}
	For any number $o \in \mathcal{O}_1$ then the smallest field containing $o$ is of degree $2^n \cdot 3^k$ for any integers $n$ and $k$
\end{corollary}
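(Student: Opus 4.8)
The plan is to run the same argument used for the constructible numbers, with cubics in place of quadratics. Fix $o \in \mathcal{O}_1$. By definition $o$ is produced from $0$ and $1$ by a finite list of simple origami folds, each fold creating finitely many new points. By the theorem that the intersections created by any origami fold can be described by a cubic polynomial, the coordinates of each newly created point are roots of a polynomial of degree at most $3$ whose coefficients already lie in the field generated by the data constructed so far; so, exactly as in the ruler-and-compass case, constructing $o$ amounts to extending $\mathbb{Q}$ by finitely many solutions of cubic (or lower degree) polynomials.

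First I would build a tower $\mathbb{Q} = F_0 \subset F_1 \subset \cdots \subset F_m$ with $o \in F_m$ in which each step $F_{i+1}/F_i$ is a splitting extension. To do this, process the folds in order; whenever a cubic, quadratic, or linear polynomial appears, factor it over the current field into irreducibles and adjoin the roots of one irreducible factor at a time. Each resulting step is the splitting field over $F_i$ of an irreducible polynomial of degree $d \in \{1,2,3\}$, hence splitting by the theorem that adjoining all roots of an irreducible polynomial yields a splitting extension; and using the corollary on conjugates and real/imaginary parts I may further enlarge $F_m$ by finitely many such steps so that it is closed under the operations appearing in the constructions, without changing the form of the conclusion.

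Next I would count $|Gal(F_m)|$. By Lemma \vref{lem:splittower}, each $F_i/\mathbb{Q}$ is splitting, so the multiplicativity theorem $|Gal(G/F)|\cdot|Gal(F)| = |Gal(G)|$ applies at every level, giving
\begin{equation*}
  |Gal(F_m)| = \prod_{i=0}^{m-1} |Gal(F_{i+1}/F_i)|.
\end{equation*}
By the theorem bounding the Galois group of the splitting field of an irreducible polynomial of degree $d$, each factor divides $d! \le 3! = 6$, so $|Gal(F_{i+1}/F_i)| \in \{1,2,3,6\}$, i.e. has the form $2^{a_i}3^{b_i}$ with $a_i,b_i \in \{0,1\}$. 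Hence $|Gal(F_m)| = 2^n 3^k$ with $n = \sum a_i$, $k = \sum b_i$. Since $\mathbb{Q} \subseteq \mathbb{Q}(o) \subseteq F_m$ and $F_m/\mathbb{Q}$ is splitting, the smallest field $\mathbb{Q}(o)$ containing $o$ has $[\mathbb{Q}(o):\mathbb{Q}]$ dividing $[F_m:\mathbb{Q}] = |Gal(F_m)| = 2^n 3^k$, and a divisor of $2^n 3^k$ is again of the form $2^{n'}3^{k'}$, which is the claim.

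The main obstacle I anticipate is not the arithmetic but the bookkeeping around the word "splitting": a single raw fold can introduce several interrelated coordinates whose defining cubics have coefficients in an already-extended field rather than in $\mathbb{Q}$, so one must take care to refine the construction into a tower of genuine successive splitting extensions before invoking Lemma \ref{lem:splittower} and the multiplicativity of Galois-group orders. Once the tower is arranged correctly the degree count is immediate. (As with the analogous statement for constructible numbers, the quantifier in the statement should be read as "for some integers $n$ and $k$".)
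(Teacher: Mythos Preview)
Your proposal is correct and follows essentially the same approach as the paper: the paper gives no separate proof for this corollary beyond the phrase ``Likewise following the same argument one can see that,'' referring back to the $2^n$ theorem for constructible numbers, and you have carried out exactly that analogy with cubics in place of quadratics. Your version is in fact considerably more careful than the paper's sketch---you explicitly build the tower of splitting extensions, invoke Lemma~\ref{lem:splittower}, and track the $d!\le 6$ bound on each step---whereas the paper leaves all of this implicit.
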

Since it is possible to describe doubling the cube and trisecting the angle as the solutions of the cubics $x^3-a$ and $3x^3-2x+c$ for $c<1$ and any $a$. However, some numbers are still out of reach, arbitrary angle division and arbitrary regular polygons are still unconstructible as well as the elusive square of the circle.


\section*{The Quadratrix of Hippias}
In Ancient Greece, the first partial solution to the angle trisection problem comes from Hippias where he imagines a new curve in the plane drawn with a compass-like construction. The definition of the curve can be given as so. 
\begin{figure}[h!]
	\center
	\includegraphics*[width=.4\textwidth]{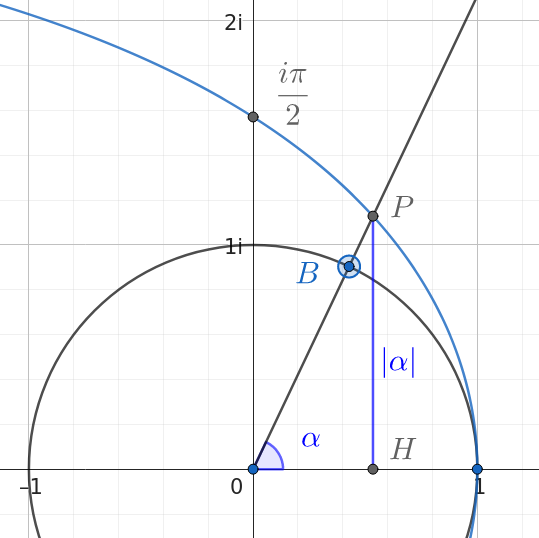}
	\caption{Definition of the Quadratrix}
	\label{fig:quaddef}
\end{figure}
\begin{definition}
	Given a circle with center $A$ and point on the radius $B$. The quadratrix is defined so that for any point on the circle $X$. Then the point on the quadratrix $Z$ on the line $\overline{AX}$ is defined so that the arc-length of $\arc{BX}$ is equal to the distance from the line $\overline{AB}$ As seen in Figure \vref{fig:quaddef}
\end{definition}

\begin{example}(Construction of the number $\pi/7$)
	Using the definition of the quadratrix, we can try to find the points where it intersects the imaginary axis, since the imaginary line is angled at $\pi/2$ radians from the real line, then by \vref{fig:quaddef}, it must intersect at the points $\pm \frac{i\pi}{2}$. Since the number $i\pi/2$ is constructible, we can multiply by the constructible number $\frac{-2i}{7}$ to construct the point $\frac{\pi}{7}$.
\end{example}
\begin{example}(Partition of an angle into 5 pieces)
	\begin{figure}[h!]
		\center
		\includegraphics*[width=.4\textwidth]{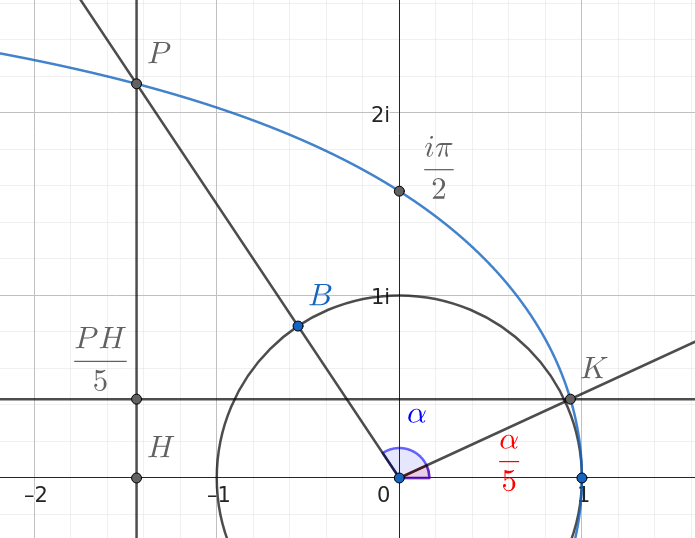}
		\caption{Partition of an angle into 5 parts.}
		\label{fig:quadquint}
	\end{figure}
\end{example}
Hippias also proved that this curve can partition an arbitrary angle, and can therefore create any regular-sided polygon. 130 years later the mathematician Dinostratus proved that it is also possible to square the circle as well. (Both techniques will be proved later by saying that the field containing quadratrix is also inside) But from here to the best of the author's knowledge no one has done a further dive into the quadratrix aside from reiterating the results from over 2 millennia ago.



\section*{Algebraic Definition of $\mathcal{T}_1$}
As we have just seen, the majority of the power of the quadratrix in classic constructions comes from its ability to convert angles into segment lengths and segment lengths into angles, thus let us consider a field generated by those 2 constructions:
\begin{definition}
	The field of ''Perpendicular Trigonometric'' Numbers $\mathcal{T}_1$ is defined to be the smallest field satisfying
	\begin{enumerate}
		\item The set of rational numbers $\mathbb{Q}$ is contained in $\mathcal{T}_1$ (Lemma \ref{lem:t1constructible} lets us show that every constructible number is in $\mathcal{T}_1$)
		\item The field is closed under the operation defined in Definition \ref{def:seg2arc}
		\item The field is closed under the operation defined in Definition \ref{def:arc2seg}
	\end{enumerate}
	\label{def:beginningofresults}
\end{definition}
\begin{definition}[Arc $\to$ Segment: \textbf{T1}]
	For any arc $\arc{AB}$ it is possible to construct a segment $\overline{CD}$ with length equal to the arc-length of $\arc{AB}$
	\label{def:arc2seg}
\end{definition}
\begin{definition}[Segment $\to$ Arc: \textbf{T2}]
	For any segment $\overline{CD}$ it is possible to construct a arc $\arc{AB}$ with arc-length equal to the length of $\overline{CD}$
	\label{def:seg2arc}
\end{definition}
It is quite apparent that this field is already solving ancient problems from antiquity as since it is easy to construct a segment of length $\pi$, you can easily square the circle (Also showing that $\mathcal{C}(\pi)\subset \mathcal{T}_1$).  It goes significantly deeper than that as the following theorem should show 
 
\begin{theorem}\label{thm:exp}
	For any real trigonometric $\alpha$, then $\alpha$ is trigonometric if and only if $e^{i\alpha}$ is trigonometric. (Alternatively stated, for any $|\beta|=1$ is trigonometric if and only if $\ln(\beta)$ is trigonometric)
\end{theorem}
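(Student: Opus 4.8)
The plan is to exploit the fact that $e^{i\alpha}$ is literally the point one reaches by rolling a segment of length $\alpha$ along the unit circle, starting from $1$. Set up the standing reductions first: the unit circle $|z|=1$ and the point $1$ are classically constructible, hence lie in $\mathcal{T}_1$; applying the Arc~$\to$~Segment operation of Definition~\ref{def:arc2seg} to the semicircular arc of the unit circle shows $\pi\in\mathcal{T}_1$, so $2\pi\in\mathcal{T}_1$ as well. For a real $r\in\mathcal{T}_1$ the reduced number $r-2\pi\lfloor r/2\pi\rfloor$ is again in $\mathcal{T}_1$, since $\lfloor r/2\pi\rfloor$ is a fixed integer and every field contains $\mathbb{Z}$, and $e^{ir}$ depends only on this reduced value. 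So I may assume throughout that $\alpha\in[0,2\pi)$, in which case the arc of the unit circle from $1$ to $\beta:=e^{i\alpha}$ (traversed counterclockwise) has arc-length exactly $\alpha$, and the complementary arc has length $2\pi-\alpha$.

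For the forward direction, suppose $\alpha\in\mathcal{T}_1\cap\mathbb{R}$. Then the segment from $0$ to $\alpha$ has length $\alpha$, and applying the Segment~$\to$~Arc operation of Definition~\ref{def:seg2arc} produces an arc of that length, which I lay off along the unit circle starting at $1$; by the observation above its far endpoint is $e^{i\alpha}$, so $e^{i\alpha}\in\mathcal{T}_1$.

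For the converse, suppose $\beta=e^{i\alpha}\in\mathcal{T}_1$ with $|\beta|=1$. Both arcs of the unit circle joining $1$ to $\beta$ are constructible (their endpoints and the centre $0$ lie in $\mathcal{T}_1$), so applying the Arc~$\to$~Segment operation of Definition~\ref{def:arc2seg} to each yields segments whose lengths $\ell_1,\ell_2$ lie in $\mathcal{T}_1$ (using the earlier lemma that $|CD|\in\mathcal{T}_1$ for constructible $C,D$), with $\ell_1+\ell_2=2\pi$ and $\{\ell_1,\ell_2\}=\{\alpha,\,2\pi-\alpha\}$. Since $2\pi\in\mathcal{T}_1$, in either case $\alpha\in\mathcal{T}_1$, and adding back the multiple of $2\pi$ removed during the reduction recovers the original $\alpha$. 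The parenthetical reformulation is the same statement: for $|\beta|=1$ one has $\ln\beta=i\alpha$ up to an additive multiple of $2\pi i$, and multiplication by the constructible number $i$ together with the lemma relating a point to $\Re$ and $\Im$ converts between $\alpha\in\mathcal{T}_1$ and $\ln\beta\in\mathcal{T}_1$.

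The routine content is all of the above; the care goes entirely into keeping the two transfer operations inside their domains of definition. An arc of the unit circle has length at most $2\pi$, the operations return an \emph{unsigned} length, and $\log$ is multivalued — all three wrinkles are absorbed by the mod-$2\pi$ reduction and by carrying both arcs through the converse. One genuine gap to address: Definition~\ref{def:seg2arc} asserts only that \emph{some} arc of the prescribed length can be built, so in the forward direction I must note that this arc can be transported to (or, since the unit circle is constructible, rescaled onto) the unit circle with initial point $1$; this transport step, not any Galois-theoretic ingredient, is the one place the argument needs explicit justification, and I expect it to be the main — and only — obstacle.
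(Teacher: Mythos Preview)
Your proof is correct and follows essentially the same approach as the paper: use \textbf{T2} (Segment~$\to$~Arc) to wrap the segment of length $\alpha$ onto the unit circle for the forward direction, and \textbf{T1} (Arc~$\to$~Segment) to unwind the arc from $1$ to $e^{i\alpha}$ for the converse. You are simply more careful than the paper about the mod-$2\pi$ reduction, the two-arc ambiguity, and the transport of the arc onto the unit circle---all of which the paper's proof silently assumes.
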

\begin{figure}[h!]
	\center
	\includegraphics[width=0.5\textwidth]{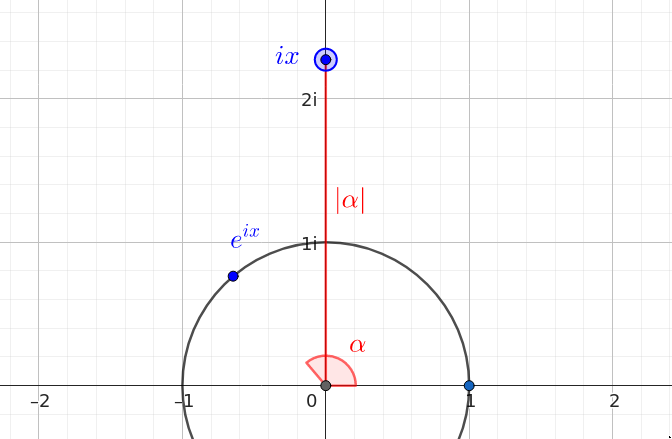}
	\caption[]{A Geometric View of Complex Exponentiation}
	\label{fig:exp}
\end{figure}
\begin{proof}
	If a real number $\alpha$ is constructible then via \textbf{T2} we can wrap the segment from $0$ to $\alpha$ around the unit circle with an endpoint of $\cos(\alpha)+i\sin(\alpha)=e^{i\alpha}$. Likewise for any trigonometric point on the unit circle is expressible as $\cos(\alpha) + i \sin(\alpha)=e^{i\alpha} $then we can use \textbf{T1} to unwind that to a segment length, thereby constructing $\alpha$. (This can also be expressed by taking the natural log of any point on the unit circle in the complex plane.)
\end{proof}
\begin{theorem}[Trigonometric Closure]\label{thm:trig}
	Any real number $\alpha$ is trigonometric if and only if $\sin (\alpha)$, $\cos (\alpha)$, $\tan (\alpha)$, are trigonometric.
\end{theorem}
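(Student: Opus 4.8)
The plan is to reduce the whole statement to Theorem~\ref{thm:exp} together with the field structure of $\mathcal{T}_1$. Since $\mathcal{T}_1$ contains every constructible number (Lemma~\ref{lem:t1constructible}), it contains $i$, and, being a field, it is closed under $+,-,\times,\div$; moreover it is closed under the auxiliary Euclidean constructions $\bar z,\Re(z),\Im(z),|z|,\sqrt{z}$ of the earlier lemma. Because $\alpha$ is real, $e^{i\alpha}$ lies on the unit circle, so Theorem~\ref{thm:exp} applies in \emph{both} directions: $\alpha\in\mathcal{T}_1$ if and only if $e^{i\alpha}\in\mathcal{T}_1$. Everything else is bookkeeping with the identity $e^{i\alpha}=\cos\alpha+i\sin\alpha$.

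For the forward direction I would assume $\alpha$ is trigonometric. Then $e^{i\alpha}\in\mathcal{T}_1$ by Theorem~\ref{thm:exp}, and since $|e^{i\alpha}|=1$ we also have $e^{-i\alpha}=1/e^{i\alpha}\in\mathcal{T}_1$. Hence $\cos\alpha=\frac12(e^{i\alpha}+e^{-i\alpha})$ and $\sin\alpha=\frac1{2i}(e^{i\alpha}-e^{-i\alpha})$ lie in $\mathcal{T}_1$ by field operations alone, and $\tan\alpha=\sin\alpha/\cos\alpha\in\mathcal{T}_1$ whenever $\cos\alpha\neq 0$ (when $\cos\alpha=0$ the tangent is undefined and there is nothing to prove).

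For the converse, suppose $\sin\alpha$, $\cos\alpha$, $\tan\alpha$ are all trigonometric. Then $e^{i\alpha}=\cos\alpha+i\sin\alpha\in\mathcal{T}_1$, and Theorem~\ref{thm:exp} gives $\alpha\in\mathcal{T}_1$. For the stronger ``any one of them suffices'' form I would note that the Pythagorean identity lets us pass between the three quantities while staying inside $\mathcal{T}_1$: $\cos\alpha$ is one of the two values $\pm\sqrt{1-\sin^2\alpha}$, both of which belong to $\mathcal{T}_1$ by closure under square roots once $\sin\alpha$ does; symmetrically $\sin\alpha$ is recovered from $\cos\alpha$; and from $\tan\alpha$ one gets $\cos\alpha=\pm 1/\sqrt{1+\tan^2\alpha}$ and then $\sin\alpha=\tan\alpha\cos\alpha$.

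The only point requiring care — and the one I would flag as the ``obstacle,'' though it is minor — is the branch/sign choice in the converse: we never need to determine the correct sign constructively, because $\cos\alpha$ is \emph{equal} to one of finitely many candidate values, each of which is already in $\mathcal{T}_1$, so its membership is automatic. The remaining fiddly cases ($\cos\alpha=0$, or deciding which of the three hypotheses to start from) are dispatched by using whichever of $\sin\alpha,\cos\alpha,\tan\alpha$ is defined. No new geometry is needed beyond Theorem~\ref{thm:exp} and the field and Euclidean closure properties of $\mathcal{T}_1$.
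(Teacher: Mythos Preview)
Your proof is correct and follows essentially the same route as the paper: both directions reduce to Theorem~\ref{thm:exp} via the identity $e^{i\alpha}=\cos\alpha+i\sin\alpha$, with the forward direction literally identical. For the converse the paper packages the argument as the explicit formulas $\sin^{-1}(x)=-i\ln\bigl(\sqrt{1-x^2}+ix\bigr)$, $\cos^{-1}(x)=\tfrac{\pi}{2}+i\ln\bigl(\sqrt{1-x^2}+ix\bigr)$, $\tan^{-1}(x)=\tfrac{i}{2}\ln\bigl(\tfrac{1-ix}{1+ix}\bigr)$ and observes the input to $\ln$ has modulus~$1$, whereas you form $\cos\alpha+i\sin\alpha$ directly and recover the missing quantity via the Pythagorean identity---but this is the same computation in slightly different dress.
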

\begin{proof}[Proof $\Rightarrow$]
	Its notable that if $\alpha$ is real and trigonometric then by theorem \vref{thm:exp}, then $e^{i\alpha}$ is trigonometric and since:
	\begin{align*}
		\sin(x)&=\frac{e^{ix}-e^{-ix}}{2i}\\
		\cos(x)&=\frac{e^{ix}+e^{-ix}}{2}\\
		\tan(x)&=-i\frac{e^{ix}-e^{-ix}}{e^{ix}+e^{-ix}}
	\end{align*}
	Every number of this type is therefore constructible.
\end{proof}
\begin{proof}[Proof $\Leftarrow$]
	Using the functions above, it is possible to derive the following formulas using the socks and shoes rule ie. $(f\circ g)^{-1}=g^{-1} \circ f^{-1}$.
	\begin{align*}
		\sin^{-1}(x)&=-i \ln \left(\sqrt{1-x^2}+ix\right)\\
		\cos^{-1}(x)&= \frac{\pi}{2}+i \ln \left(\sqrt{1-x^2}+ix\right)\\
		\tan^{-1}(x)&= \frac{i}{2} \ln \left(\frac{1-ix}{1+ix}\right)
	\end{align*}
	Where the inverse trig functions are defined for real values ($[0,1]$ for $\sin^{-1}$ and $\cos^{-1}$, and the entire real line for $\tan^{-1}$), then the values that are given to $\ln(x)$ are all of magnitude 1.
\end{proof}

\section*{Geometric Definitions of $\mathcal{T}_1$}
So how does this curve relate to the quadratrix and other transcendental curves throughout history that have been used to solve similar problems? While it might be tempting to look at the fields generated by creating a "quadratrix" compass, or "trigonometric" compass that can draw these shapes and think about what intersections you might get, however trying this method can easily lead to intractable problems since by definition of your field is going to include solutions to equations like this:
\begin{align*}
	\alpha x = \sin(x)
\end{align*}
Since it must include the intersections of a sloped line and the standard trigonometric curve. Based on that it makes sense to examine subfields with fewer allowed constructions.

\subsection*{Trigonometric Curves}
\begin{definition}
	Consider the field of numbers generated by allowing intersections with the curves $x \mapsto (x,\sin(x))$ or $x \mapsto (x, \cos(x))$, then consider the subfield:
	\begin{enumerate}
		\item All the standard constructions in classic euclidean geometry, (ie, creating lines and circles, and finding intersections of any two lines, two circles, or a line and a circle)
		\item Given the curve $x \mapsto (x,\sin(x))$ or $x \mapsto (x, \cos(x))$ when graphed on $\mathbb{R}^2$, you can construct the intersection of the curves, and any vertical line.
		\item Given the curve $x \mapsto (x,\sin(x))$ or $x \mapsto (x, \cos(x))$ when graphed on $\mathbb{R}^2$, you can construct the intersection of the curves, and any horizontal line.
	\end{enumerate}
	\label{def:trigt1field}
\end{definition}
\begin{theorem}
	The field described in Definition \vref{def:trigt1field} is equal to $\mathcal{T}_1$.
\end{theorem}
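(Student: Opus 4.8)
The statement asserts an equality of two fields, so the plan is to prove the two inclusions separately, in each case exploiting that one of the fields is \emph{minimal} among fields closed under a prescribed list of operations. Write $\mathcal{S}$ for the field of Definition~\ref{def:trigt1field}. To obtain $\mathcal{S}\subseteq\mathcal{T}_1$ it will suffice to show that $\mathcal{T}_1$ is itself closed under operations (1)--(3) of that definition; to obtain $\mathcal{T}_1\subseteq\mathcal{S}$ it will suffice to show that $\mathcal{S}$ contains $\mathbb{Q}$ and is closed under the two operations \textbf{T1} and \textbf{T2}. Throughout I will identify a point $(x,y)$ with the complex number $x+iy$ and use freely that both fields are closed under $\Re$, $\Im$, conjugation, and square roots, which holds for any field containing all constructible numbers by the Lemma following the first theorem.

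For the first inclusion I would argue as follows. Operation (1) lands in $\mathcal{T}_1$ by Lemma~\ref{lem:t1constructible}. For operation (2), a vertical line $x=a$ with $a\in\mathcal{T}_1$ real meets $y=\sin x$ (resp.\ $y=\cos x$) at $a+i\sin a$ (resp.\ $a+i\cos a$); since $a$ is real and trigonometric, Theorem~\ref{thm:trig} puts $\sin a,\cos a\in\mathcal{T}_1$, so the intersection point is in $\mathcal{T}_1$. For operation (3), a horizontal line $y=b$ with $b\in\mathcal{T}_1$ real and $|b|\le1$ meets $y=\sin x$ at the points $\arcsin b+2\pi k$ and $\pi-\arcsin b+2\pi k$; here $\pi\in\mathcal{T}_1$ (apply \textbf{T1} to a semicircular arc of the unit circle), and $\arcsin b=-i\ln(\sqrt{1-b^2}+ib)$ with $\sqrt{1-b^2}\in\mathcal{T}_1$ and $|\sqrt{1-b^2}+ib|=1$, so Theorem~\ref{thm:exp} puts $\arcsin b\in\mathcal{T}_1$; hence every such point is in $\mathcal{T}_1$, and the cosine case is the same with $\arccos$. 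Since $\mathcal{T}_1$ is thus closed under (1)--(3) and contains $\mathbb{Q}$, minimality of $\mathcal{S}$ forces $\mathcal{S}\subseteq\mathcal{T}_1$.

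For the second inclusion I would first note $\mathbb{Q}\subseteq\mathcal{S}$, that $\mathcal{S}$ contains every constructible number through operation (1), and that $\pi\in\mathcal{S}$ since intersecting $y=\sin x$ with $y=0$ yields the point $\pi$. For closure under \textbf{T2}: an arc of arc-length $\ell$ on a circle of constructible radius $r>0$ and center $c$ has an endpoint $c+re^{i\ell/r}=c+r\cos(\ell/r)+ir\sin(\ell/r)$, and with $\ell,r,c\in\mathcal{S}$ we have $\ell/r\in\mathcal{S}$, so operation (2) supplies $\cos(\ell/r)$ and $\sin(\ell/r)$ and the endpoint lies in $\mathcal{S}$ — this is just the $\Rightarrow$ half of Theorem~\ref{thm:exp}. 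For closure under \textbf{T1}: an arc $\arc{AB}$ comes equipped with a constructible circle (center $c$, radius $r$) and constructible endpoints $A,B$ on it, so $(A-c)/r$ and $(B-c)/r$ are unit-modulus elements of $\mathcal{S}$ and their quotient $\beta=e^{i\theta}\in\mathcal{S}$, where $\theta$ is the central angle; writing $\beta=p+iq$ with $p,q\in\mathcal{S}$ real, the value $\theta$ (the principal value $\arccos p\in[0,\pi]$, then corrected by the sign of $q$ and by an integer multiple of $2\pi\in\mathcal{S}$) appears as an intersection of $y=\cos x$ with the line $y=p$, hence $\theta\in\mathcal{S}$, so the arc-length $r\theta$ (or $r(2\pi-\theta)$ for the major arc) and the endpoint of the resulting segment lie in $\mathcal{S}$. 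Minimality of $\mathcal{T}_1$ then gives $\mathcal{T}_1\subseteq\mathcal{S}$.

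The hard part will not be the algebra but the geometric bookkeeping: every curve--line intersection yields a countable family of points (all branches of $\arcsin$ or $\arccos$ translated by $2\pi\mathbb{Z}$), and I will need to check that the single value demanded by \textbf{T1} or \textbf{T2} — a specific branch of $\ln$, or the minor versus major arc — actually occurs among them, which is exactly where $\pi\in\mathcal{S}$ and the sign of $q$ enter. I also have to be careful to invoke Theorem~\ref{thm:exp} only on unit-modulus inputs and Theorem~\ref{thm:trig} only on real inputs, and to confirm that all the data defining an arc (center, radius, both endpoints) genuinely lies in the field so that the central angle can be recovered; once those points are pinned down, the two inclusions close as above.
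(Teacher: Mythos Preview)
Your proposal is correct and follows essentially the same route as the paper: both inclusions are reduced to Theorem~\ref{thm:trig} (equivalently Theorem~\ref{thm:exp}), identifying vertical-line intersections with evaluating $\sin$/$\cos$ and horizontal-line intersections with their inverses. You are in fact considerably more thorough than the paper's own argument, which spells out only the inclusion $\mathcal{S}\subseteq\mathcal{T}_1$ and leaves the reverse inclusion and the branch/arc bookkeeping to the reader.
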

\begin{proof}
	The first direction of proving that $\mathcal{T}_1$ contains the construction amounts to showing that all the intersections with vertical lines are constructible since this happens to be the same as evaluating the function, and horizontal lines are the same as finding function inverses. Thus applying Theorem \vref{thm:trig} deals with both halves of our equivalence. Showing every number constructible using classic Euclidean constructions will wait until lemma \vref{lem:t1constructible}
\end{proof}
\subsection*{Quadratrix of Hippias}
\begin{definition}
	Consider a subfield of numbers constructible with the quadratrix, generated using these constructions:
	\begin{enumerate}
		\item All the standard constructions in classic euclidean geometry, (ie, creating lines and circles, and finding intersections of any two lines, two circles, or a line and a circle)
		\item Given the standard quadratrix as constructed in Figure \vref{fig:quaddef}, you can construct the intersection of the quadratrix and any line passing through the origin.
		\item Given the standard quadratrix as constructed in Figure \vref{fig:quaddef}, you can construct the intersection of the quadratrix and any horizontal line.
	\end{enumerate}
	\label{def:quadt1field}
\end{definition}
\begin{theorem}
	The field described in Definition \vref{def:quadt1field} is equal to $\mathcal{T}_1$.
\end{theorem}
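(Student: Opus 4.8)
The plan is to prove the two inclusions $\mathcal{Q}\subseteq\mathcal{T}_1$ and $\mathcal{T}_1\subseteq\mathcal{Q}$ separately, writing $\mathcal{Q}$ for the field of Definition~\ref{def:quadt1field}. First I would put the circle of Figure~\ref{fig:quaddef} in standard position --- centre $0$, radius $1$, $B=1$ --- and read off from the defining relation (arc-length of $\arc{BX}$ equals the distance from $Z$ to $\overline{AB}$) that the point of the quadratrix on the ray of angle $\theta$ has imaginary part $\theta$; hence the quadratrix is the curve
\[
Z(\theta)=\theta\cot\theta+i\,\theta=\frac{\theta}{\sin\theta}\,e^{i\theta},\qquad \theta\in\left(0,\tfrac{\pi}{2}\right],
\]
with $\arg Z(\theta)=\theta$ and $\Im Z(\theta)=\theta$. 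Since both $\theta\mapsto\arg Z(\theta)$ and $\theta\mapsto\Im Z(\theta)$ are injective on $\left(0,\tfrac{\pi}{2}\right]$, the line through the origin of angle $\phi\in\left(0,\tfrac{\pi}{2}\right]$ meets the quadratrix exactly at $Z(\phi)$, and the horizontal line $y=c$ with $c\in\left(0,\tfrac{\pi}{2}\right]$ meets it exactly at $Z(c)$. This is the dictionary that makes the two quadratrix moves mimic \textbf{T1} and \textbf{T2}: a line of angle $\phi$ records $\phi$ as an imaginary part, a horizontal line of height $c$ records $c$ as an argument.

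For $\mathcal{Q}\subseteq\mathcal{T}_1$ I would verify that $\mathcal{T}_1$ is already closed under each of the three constructions generating $\mathcal{Q}$. That $\mathcal{T}_1$ is closed under the classical ruler-and-compass constructions is Lemma~\ref{lem:t1constructible}. For the line-through-origin construction, which yields a point only when the line's angle $\phi$ lies in $\left(0,\tfrac{\pi}{2}\right]$: such a line carrying a nonzero $p\in\mathcal{T}_1$ meets the unit circle at $e^{i\phi}=p/|p|\in\mathcal{T}_1$, so \textbf{T1} (Definition~\ref{def:arc2seg}) applied to the arc from $1$ to $e^{i\phi}$ yields the real number $\phi\in\mathcal{T}_1$, after which $Z(\phi)=\phi\cot\phi+i\phi$ is assembled from $\phi,\cos\phi,\sin\phi$ by field operations. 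For the horizontal-line construction (only $c\in\left(0,\tfrac{\pi}{2}\right]$ occurs): from the height $c\in\mathcal{T}_1$, \textbf{T2} (Definition~\ref{def:seg2arc}) produces $e^{ic}\in\mathcal{T}_1$, whence $Z(c)=\tfrac{c}{\sin c}\,e^{ic}\in\mathcal{T}_1$. So no $\mathcal{Q}$-construction leaves $\mathcal{T}_1$, and $\mathcal{Q}\subseteq\mathcal{T}_1$.

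For $\mathcal{T}_1\subseteq\mathcal{Q}$, since $\mathcal{T}_1$ is the smallest field containing $\mathbb{Q}$ and closed under \textbf{T1} and \textbf{T2} (Definition~\ref{def:beginningofresults}), it is enough to show $\mathcal{Q}$ is closed under both; here I use freely that $\mathcal{Q}$, having the Euclidean constructions among its generators, is closed under real and imaginary parts, moduli, conjugates and square roots. For \textbf{T2}: given $d>0$ in $\mathcal{Q}$, halve $d$ exactly $k$ times so that $d/2^k\in\left(0,\tfrac{\pi}{2}\right]$, intersect the horizontal line $y=d/2^k$ with the quadratrix to get $Z(d/2^k)$, divide it by its (constructible) distance to $0$ to get $e^{id/2^k}$, and square $k$ times to reach $e^{id}$; negative $d$ is handled by conjugation, and the arc demanded by \textbf{T2} is then a ruler-and-compass construction on $1$ and $e^{id}$. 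For \textbf{T1}: an arc $\arc{AB}$ has centre and endpoints in $\mathcal{Q}$, so its radius $r\in\mathcal{Q}$; translating the centre to $0$, scaling by $1/r$ and rotating one endpoint to $1$ turns it into the arc of the unit circle from $1$ to $e^{i\phi}$ with central angle $\phi\in(0,2\pi)$. Now halve $\phi$ by successive square roots, at each step taking the root in the open upper half-plane --- the one whose argument is genuinely half, since the current argument lies in $(0,\pi)$ --- until $\phi/2^k\in\left(0,\tfrac{\pi}{2}\right]$; intersecting the line through $0$ and $e^{i\phi/2^k}$ with the quadratrix gives $Z(\phi/2^k)$, whose imaginary part is the real number $\phi/2^k\in\mathcal{Q}$, and multiplying by $2^k$ and then by $r$ gives the arc-length $L=r\phi$, which is laid off as a segment. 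Thus $\mathcal{Q}$ is closed under \textbf{T1} and \textbf{T2}, so $\mathcal{T}_1\subseteq\mathcal{Q}$, and with the first inclusion $\mathcal{Q}=\mathcal{T}_1$.

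The main obstacle is the bookkeeping forced by the quadratrix's restricted domain $\theta\in\left(0,\tfrac{\pi}{2}\right]$: arbitrary angles and lengths must first be driven into this window by repeated halving and then the halving undone, and in the \textbf{T1} step one must justify that selecting the argument-halving square root (rather than the one that adds $\pi$) is a legitimate ruler-and-compass choice --- which it is, since an argument in $(0,2\pi)$ halves into $(0,\pi)$ and so the two square roots are separated by the real axis. I would also spend a sentence confirming that, on the principal branch drawn in Figure~\ref{fig:quaddef}, the intersections used above really are the single points claimed; if one also admits the natural continuation of the quadratrix past the first quadrant, the extra branches obey the same formula shifted by a multiple of $\tfrac{\pi}{2}$ in $\theta$ and so contribute nothing new.
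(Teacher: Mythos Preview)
Your proof is correct and follows essentially the same two-inclusion strategy as the paper: show that the quadratrix intersection points lie in $\mathcal{T}_1$ via the explicit parameterisation $Z(\theta)=\theta\cot\theta+i\theta$, and show that $\mathcal{Q}$ is closed under \textbf{T1} and \textbf{T2} by reading angles and heights off the quadratrix. Your version is more careful than the paper's on two points the paper glosses over entirely: the domain restriction $\theta\in\bigl(0,\tfrac{\pi}{2}\bigr]$, which you handle with the halving-and-squaring/square-rooting manoeuvre, and the orientation of the formula (the paper writes the intersection point as $|\alpha|+i\,|\alpha|\cot\alpha$, with real and imaginary parts swapped relative to its own Figure~\ref{fig:quaddef}; your $Z(\theta)$ is the one consistent with the definition).
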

\begin{proof}
	To proceed with this proof we shall show that every number in each field is present in the other
	$(Quad \Rightarrow \mathcal{T}_1)$. Notice that given any angle measure $\alpha$, you can construct a ray at the origin where the angle formed with the real axis is $\alpha$. Then by definition, the distance between the intersection and the real axis is $\alpha$. Likewise, creating a horizontal line given by $\{x+i\cdot \alpha \quad x \in \mathbb{R}\}$. Then find the intersection with the quadratrix and draw the line to the origin creating an angle with the real axis of value $\alpha$. Showing every number constructible using classic Euclidean constructions will wait until lemma \vref{lem:t1constructible}

	$(\mathcal{T}_1 \Rightarrow Quad )$. Consider a line drawn through the origin with angle $\alpha$, by definition, the point on the quadratrix can be written as $|\alpha|+ i \cdot |\alpha|\cdot \cot(\alpha)$. Since the point is expressible using trigonometric numbers, it does not extend the field further. Likewise, taking a horizontal line, by the first property we can turn its distance from the real axis into an angle value and see where that line intersects 
	
\end{proof}
\subsection*{Archimedean Spiral}
\begin{theorem}
	The field $\mathcal{T}_1$ is equal to the field of constructible numbers with the additional ability to construct intersections with the Archimedean Spiral with lines passing through the origin and circles centered at the origin.
\end{theorem}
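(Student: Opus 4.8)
The plan is to prove set equality by the same two-directional inclusion argument used for the trigonometric-curve and quadratrix characterizations, exploiting the fact that the Archimedean spiral is the polar graph $r = \theta$ (suitably scaled), so that intersecting it with rays and origin-centered circles is exactly the operation of trading an angle for a radial distance and vice versa. First I would fix coordinates: write the spiral as the set of points $\theta \mapsto (\theta\cos\theta, \theta\sin\theta)$ for $\theta \ge 0$ (any positive scaling is constructible and hence harmless). A line through the origin making angle $\alpha$ with the real axis meets the spiral at the point with polar radius $|\alpha|$ (or $\alpha + 2\pi k$ for the other branches), i.e. at the complex number $|\alpha| e^{i\alpha}$; a circle of constructible radius $\rho$ centered at the origin meets the spiral at the point with polar angle $\rho$, i.e. at $\rho e^{i\rho}$.

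For the inclusion $(\text{Spiral} \Rightarrow \mathcal{T}_1)$ I would argue that each spiral construction is already realizable inside $\mathcal{T}_1$. Given a ray at angle $\alpha$ with $\alpha \in \mathcal{T}_1$ (angles of constructible rays are in $\mathcal{T}_1$ because, e.g., $e^{i\alpha}$ is constructible and Theorem~\ref{thm:exp} gives $\alpha$), the intersection point $|\alpha|e^{i\alpha}$ is a product of the $\mathcal{T}_1$-number $|\alpha|$ and the $\mathcal{T}_1$-number $e^{i\alpha}$, hence in $\mathcal{T}_1$. Given an origin-centered circle of radius $\rho \in \mathcal{T}_1$, the intersection $\rho e^{i\rho}$ lies in $\mathcal{T}_1$ because $e^{i\rho}$ is trigonometric by Theorem~\ref{thm:exp}. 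So no spiral intersection escapes $\mathcal{T}_1$; combined with the (deferred) Lemma~\ref{lem:t1constructible} that all classically constructible numbers are in $\mathcal{T}_1$, this direction is done.

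For the reverse inclusion $(\mathcal{T}_1 \Rightarrow \text{Spiral})$ I would show the spiral field is closed under the two generating operations \textbf{T1} and \textbf{T2} of Definition~\ref{def:beginningofresults}. For \textbf{T2} (segment $\to$ arc): given a constructible length $\ell$, draw the origin-centered circle of radius $\ell$, intersect it with the spiral to get $\ell e^{i\ell}$, then recover the angle $\ell$ as the argument of that point (the argument of a constructible point is constructible by the classical lemma on $\Re, \Im, |\cdot|$ together with angle constructions), giving an arc of that angular measure on the unit circle — equivalently producing the trigonometric number $e^{i\ell}$, which is what \textbf{T2} asks for. For \textbf{T1} (arc $\to$ segment): given a point $e^{i\beta}$ on the unit circle (constructible), draw the ray through the origin and that point, intersect with the spiral to obtain $|\beta| e^{i\beta}$, and read off the radial distance $|\beta|$ as its modulus — a classically constructible operation from that point — which realizes the segment of length equal to the arc. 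Since the spiral field contains $\mathbb{Q}$ and is closed under \textbf{T1} and \textbf{T2}, minimality of $\mathcal{T}_1$ forces $\mathcal{T}_1 \subseteq \text{Spiral}$.

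The main obstacle, and the point I would be most careful about, is the multivaluedness of the spiral: a ray through the origin meets $r=\theta$ in infinitely many points $\alpha + 2\pi k$, and likewise a circle of radius $\rho$ meets it at angles $\rho, \rho-2\pi, \dots$, so I must argue that we are free to select the "first" intersection (smallest nonnegative parameter), or alternatively that any of the intersections suffices because the ambiguity is always by a constructible additive multiple of $2\pi$ (and $2\pi$ is constructible in $\mathcal{T}_1$, being $-i\ln 1$ in the degenerate sense, or simply the length obtained by unwrapping the full unit circle via \textbf{T1}). I would state explicitly that the spiral construction is interpreted as yielding the branch with parameter in $[0,2\pi)$, matching how the quadratrix was implicitly taken on its principal branch in Definition~\ref{def:quadt1field}; with that convention the correspondences above are single-valued and the proof goes through cleanly. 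A secondary minor point is handling $\alpha<0$ (negative angles), which is dispatched by reflecting across the real axis, a classical construction.
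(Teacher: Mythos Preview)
Your proposal is correct and follows essentially the same two-inclusion strategy as the paper: one direction shows the spiral implements \textbf{T1}/\textbf{T2} (ray intersection reads off $|\alpha|$ from an angle, circle intersection reads off an angle from a radius), and the other direction writes the spiral intersection points as $\theta\cos\theta + i\,\theta\sin\theta$ and $r\cos r + i\,r\sin r$, which lie in $\mathcal{T}_1$ by Theorem~\ref{thm:exp}. Your treatment is in fact more careful than the paper's---you explicitly address branch selection and the $2\pi k$ ambiguity, which the paper silently ignores---but the underlying argument is the same.
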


\begin{figure}[h!]
		\center
		\includegraphics[width=0.5\textwidth]{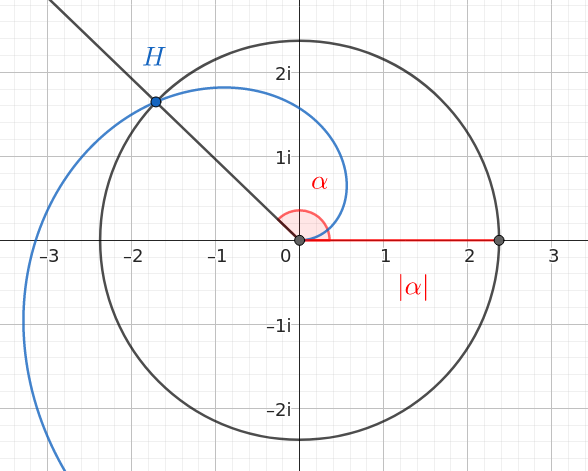}
		\caption[]{The Spiral of Archemedies}
		\label{fig:spiraldef}
\end{figure}

\begin{definition}
	Consider a subfield of numbers constructible with the Spiral of Archimedes, generated using these constructions:
	\begin{enumerate}
		\item All the standard constructions in classic euclidean geometry, (ie, creating lines and circles, and finding intersections of any two lines, two circles, or a line and a circle)
		\item Given the standard Spiral of Archimedes as constructed in Figure \vref{fig:spiraldef}, you can construct the intersection of the spiral and any line passing through the origin.
		\item Given the standard Spiral of Archimedes as constructed in Figure \vref{fig:spiraldef}, you can construct the intersection of the spiral and any circle centered at the origin.
	\end{enumerate}
	\label{def:spiralt1field}
\end{definition}
\begin{theorem}
	The field described in Definition \vref{def:spiralt1field} is equal to $\mathcal{T}_1$.
\end{theorem}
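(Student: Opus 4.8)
The plan is to prove the two inclusions separately, reusing the template that already worked for the trigonometric curves (Theorem after Definition \ref{def:trigt1field}) and for the quadratrix (Theorem after Definition \ref{def:quadt1field}). Parametrise the Archimedean spiral as $t \mapsto t(\cos t + i\sin t) = t e^{it}$ for $t \ge 0$, so that the point of the spiral sitting at polar angle $t$ lies at distance exactly $t$ from the origin; this built-in identification of radial distance with angular measure is precisely the Arc $\leftrightarrow$ Segment dictionary of Definitions \ref{def:arc2seg} and \ref{def:seg2arc}, which is why the spiral ought to generate $\mathcal{T}_1$ and nothing more.

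For the inclusion of the spiral field into $\mathcal{T}_1$: by Lemma \ref{lem:t1constructible} the classical ruler-and-compass steps never leave $\mathcal{T}_1$, so it suffices to check that the two new spiral operations stay inside $\mathcal{T}_1$. A line through the origin is determined by some nonzero point $z$ already built, hence $z \in \mathcal{T}_1$ and $e^{i\arg z} = z/|z| \in \mathcal{T}_1$ since $|z| \in \mathcal{T}_1$; by Theorem \ref{thm:exp} this forces $\arg z \in \mathcal{T}_1$, and every intersection of the line with the spiral has the form $(-1)^k(\arg z + k\pi)\,e^{i\arg z}$ for some $k \in \mathbb{Z}$, hence lies in $\mathcal{T}_1$. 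A circle about the origin has some radius $\rho \in \mathcal{T}_1$, meets the spiral in the single point $\rho e^{i\rho}$, and $e^{i\rho} \in \mathcal{T}_1$ by Theorem \ref{thm:exp} applied to the real number $\rho$, so $\rho e^{i\rho} \in \mathcal{T}_1$.

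For the reverse inclusion, the spiral field is a field containing $\mathbb{Q}$, so by minimality of $\mathcal{T}_1$ it suffices to show the spiral field is closed under the operations \textbf{T1} and \textbf{T2}. For \textbf{T1}, given an arc $\arc{AB}$ on a circle of centre $O$, radius $\rho$ and central angle $\theta \in (0,2\pi)$, translate $O$ to the origin and rotate so that $B$ sits at polar angle $\theta$; the line $OB$ meets the spiral, and its intersection lying on the ray $OB$ nearest the origin is at distance $\theta$, producing a segment of length $\theta$, which we rescale by $\rho$ using similar triangles to obtain the arc length $\rho\theta$. For \textbf{T2}, first note that the spiral crosses the positive real axis at the points $2\pi, 4\pi, \dots$, so $2\pi$ belongs to the spiral field; given a segment of length $s$, bisect it repeatedly to reach $s' = s/2^{n}$ with $s' < 2\pi$, draw the circle of radius $s'$ about the origin, which meets the spiral in the single point at polar angle $s'$, run the ray from the origin through that point out to the circle of radius $2^{n}$ about the origin, and take the arc cut off on that circle between the positive real axis and this ray: it has length $2^{n}s' = s$.

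I expect the real work here to be bookkeeping rather than anything deep. The spiral differs from the quadratrix in that a line through the origin meets it in infinitely many points, so the proof must justify selecting ``the intersection on ray $OB$ nearest the origin'' and must track the $2\pi k$ and $\pm\pi$ shifts in the coefficients above so as to stay inside $\mathcal{T}_1$; symmetrically, the \textbf{T2} step needs the explicitly constructed value $2\pi$ in order to fold an arbitrary prescribed arc length back into a single turn of the spiral. The one genuinely non-elementary ingredient is the appeal to Theorem \ref{thm:exp}, which is exactly what converts ``the angle a ray makes with the real axis'' into a bona fide element of $\mathcal{T}_1$ and back; once that is in hand, everything reduces to the already-established equivalences of Theorems \ref{thm:exp} and \ref{thm:trig} together with routine ruler-and-compass scaling, so I anticipate no obstruction beyond careful case analysis of which point each construction returns.
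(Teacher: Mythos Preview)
Your proposal is correct and follows essentially the same two-inclusion template as the paper: parametrise the spiral as $t\mapsto t e^{it}$, then use the built-in ``radius $=$ angle'' property to realise \textbf{T1}/\textbf{T2} inside the spiral field, and conversely express every spiral intersection point as $\theta e^{i\theta}$ or $\rho e^{i\rho}$ and invoke Theorem~\ref{thm:exp}/\ref{thm:trig} to land in $\mathcal{T}_1$. Where you go beyond the paper is in bookkeeping the paper simply omits---tracking all the line--spiral intersections as $(-1)^k(\arg z + k\pi)e^{i\arg z}$, and explicitly constructing $2\pi$ so that an arbitrary segment length can be folded into a single turn before invoking the circle--spiral intersection---so your version is strictly more complete, not a different argument.
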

\begin{proof}
	To proceed with this proof we shall show that every number in each field is present in the other

	$(Archm \Rightarrow \mathcal{T}_1)$.  Notice that if you have any angle, you can create a ray from the origin with an angle $\alpha$ the distance from the point of intersection on the spiral and the origin will be $|\alpha|$, thus showing it is constructible. Likewise for any constructible real number $r$, then taking that number we can create a circle of radius $r$ centered at the origin. Drawing a ray from the origin to this point lets us create a ray of $z$ radians from the real axis. Showing every number constructible using classic Euclidean constructions will wait until lemma \vref{lem:t1constructible}
 
	$(\mathcal{T}_1 \Rightarrow Archem)$.   Notice the parameterization of the spiral of Archimedes as $(t\cdot \cos(t),t\cdot \sin(t))$., this lets us algebraically solve for the intersection points of any circle centered at the origin with radius $r$, namely $r\cos(t)+i\cdot r \sin(t)$. Likewise for any ray from the origin with an angle $\theta$ its intersection point will be defined by $\theta \cos(\theta)+ i \cdot \theta + \sin(\theta)$
\end{proof}

\section*{Transcendence of $\mathcal{T}_1$}
Now having established some various formulas for the algebra of $\mathcal{T}_1$ we can continue deriving results first citing an important theorem in transcendental number theory.
\begin{definition}
	The \textbf{transcendence degree} of a field $\mathcal{F}$ is the smallest number of transcendental elements $a_1, \dots, a_n$ such that every element of $\mathcal{F}$ can be expressed as the roots of polynomials with coefficients in $\mathbb{Q}(a_1, \dots, a_n)$
\end{definition}
\begin{theorem}[Lindemann–Weierstrass]
	For any linearly independent set of algebraic numbers $a_1,a_2,\dots,a_n$, then the transcendence degree of $\mathbb{Q}(e^{a_1},e^{a_2} \dots, e^{a_n})$ is $n$
\end{theorem}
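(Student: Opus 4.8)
The plan is to deduce the transcendence-degree statement from the classical linear-independence form of the theorem: if $\gamma_1,\dots,\gamma_m$ are distinct algebraic numbers, then $e^{\gamma_1},\dots,e^{\gamma_m}$ are linearly independent over the field of all algebraic numbers. Granting this, suppose the transcendence degree of $\mathbb{Q}(e^{a_1},\dots,e^{a_n})$ were strictly less than $n$; then some nonzero polynomial $P$ with rational coefficients satisfies $P(e^{a_1},\dots,e^{a_n})=0$. Expanding $P=\sum_{\mathbf{k}}c_{\mathbf{k}}X_1^{k_1}\cdots X_n^{k_n}$ over finitely many exponent vectors $\mathbf{k}=(k_1,\dots,k_n)\in\mathbb{Z}_{\ge 0}^{n}$ with $c_{\mathbf{k}}\neq 0$ and substituting gives $\sum_{\mathbf{k}}c_{\mathbf{k}}\,e^{k_1 a_1+\cdots+k_n a_n}=0$. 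Because $a_1,\dots,a_n$ are linearly independent over $\mathbb{Q}$, the numbers $k_1 a_1+\cdots+k_n a_n$ are pairwise distinct algebraic numbers, so this is a nontrivial $\mathbb{Q}$-linear (in particular algebraic) relation among exponentials of distinct algebraic numbers, contradicting the linear-independence form. Conversely that form rules out every polynomial relation, and the transcendence degree is at most $n$ trivially, so it equals $n$.

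For the linear-independence form I would run the classical Hermite--Lindemann argument, adapted to several exponentials. Starting from a hypothetical relation $\sum_{i=1}^{m}\beta_i e^{\gamma_i}=0$ with the $\gamma_i$ distinct algebraic and the $\beta_i$ nonzero algebraic, one normalizes: multiply through by $e^{-\gamma_1}$ so that $0$ occurs among the exponents with a nonzero coefficient, scale so that the coefficients are algebraic integers, pass to a finite normal extension $K/\mathbb{Q}$ containing all of the data, and multiply the relation by all its $\mathrm{Gal}(K/\mathbb{Q})$-conjugates; expanding and collecting equal exponents yields a relation $\sum_{j=0}^{r}b_j e^{\delta_j}=0$ with $\delta_0=0$, $b_0\neq 0$, the $\delta_j$ and $b_j$ algebraic integers satisfying $b_{\sigma(\delta_j)}=\sigma(b_j)$ for every $\sigma\in\mathrm{Gal}(K/\mathbb{Q})$, the nonzero exponents forming complete conjugate systems so that $g(x)=\prod_{j=1}^{r}(x-\delta_j)$ has rational coefficients, and the relation still nontrivial. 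For a large prime $p$, choose a positive integer $\ell$ clearing denominators and set
\[
 f_p(x)=\ell^{\,rp}\,x^{\,p-1}\,g(x)^{p}\in\mathbb{Z}[x],\qquad
 I(t)=\int_0^{t}e^{t-x}f_p(x)\,dx=e^{t}\sum_{h\ge 0}f_p^{(h)}(0)-\sum_{h\ge 0}f_p^{(h)}(t),
\]
the second equality following from repeated integration by parts.

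The crux is to estimate $J_p=\sum_{j=0}^{r}b_j\,I(\delta_j)$ in two ways. Using the relation $\sum_j b_j e^{\delta_j}=0$, the exponential terms in each $I(\delta_j)$ cancel and $J_p=-\sum_{j=0}^{r}\sum_{h\ge 0}b_j f_p^{(h)}(\delta_j)$; because the nonzero $\delta_j$ come in complete conjugate systems with $b_{\sigma(\delta_j)}=\sigma(b_j)$ and $f_p$ has rational coefficients, summing over each conjugacy class produces a rational integer (a trace $\mathrm{Tr}_{K/\mathbb{Q}}$ of the relevant algebraic integers), so $J_p\in\mathbb{Z}$. Tracking the orders of vanishing of $f_p$ at the origin (order $p-1$) and at each $\delta_j$ (order $p$) then shows $(p-1)!\mid J_p$ while $p\nmid J_p$: the only term that survives modulo $p$ after dividing by $(p-1)!$ is the contribution of $b_0 f_p^{(p-1)}(0)=b_0(p-1)!\,\ell^{\,rp}g(0)^{p}$, which is $(p-1)!$ times a fixed nonzero integer once $p$ is larger than the data. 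Hence $J_p$ is a nonzero integer with $|J_p|\ge(p-1)!$. On the analytic side, the trivial estimate $|I(t)|\le|t|\,e^{|t|}\sup_{|x|\le|t|}|f_p(x)|$ gives $|J_p|\le C^{p}$ for a constant $C$ depending only on $r$, $\ell$, the $\delta_j$, and the $b_j$. Since $(p-1)!>C^{p}$ for all sufficiently large $p$, this is a contradiction, which proves the linear-independence form and hence the theorem.

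I expect the real difficulty to sit in two places. First, the normalization: turning an arbitrary algebraic relation into one with $0$ present, algebraic-integer coefficients compatible with the Galois action, and --- crucially --- proving that what remains after multiplying all the conjugate relations together is not the trivial identity $0=0$. This non-vanishing is the combinatorial heart of the proof and is precisely where the distinctness of the $\gamma_i$ must be exploited, typically through an extremal choice of one exponent so that its coefficient cannot be cancelled. Second, the arithmetic of $J_p$: confirming that $J_p$ is a rational integer via the trace argument, and pinning down exactly which derivative values $f_p^{(h)}(\delta_j)$ carry a factor of $p$ and which carry only $(p-1)!$, so that $J_p$ emerges as a nonzero integer of absolute value at least $(p-1)!$. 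By contrast, the integration-by-parts identity, the analytic bound $|J_p|\le C^{p}$, and the final comparison of $(p-1)!$ against $C^{p}$ are routine.
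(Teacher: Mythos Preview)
The paper does not prove this theorem; it simply cites an external reference (Natarajan--Pillai, \emph{Pillars of Transcendental Number Theory}). So there is nothing in the paper to compare your argument against.

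Your proof is correct and follows the standard route. The reduction step is clean: algebraic dependence of $e^{a_1},\dots,e^{a_n}$ over $\mathbb{Q}$ yields, upon expanding the polynomial relation, a nontrivial $\mathbb{Q}$-linear relation among the numbers $e^{k_1a_1+\cdots+k_na_n}$, and $\mathbb{Q}$-linear independence of the $a_i$ guarantees the exponents are pairwise distinct algebraic numbers, contradicting the linear-independence form. Your sketch of the Hermite--Lindemann machinery for the linear-independence form is the textbook one (essentially Baker's presentation): symmetrize via Galois conjugates to force rational-integer traces, build the auxiliary polynomial $f_p(x)=\ell^{rp}x^{p-1}g(x)^p$, extract the integer $J_p$ from the integration-by-parts identity, and play the arithmetic lower bound $|J_p|\ge(p-1)!$ against the analytic upper bound $|J_p|\le C^p$. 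You have also correctly identified where the genuine work lies --- the non-triviality of the symmetrized relation (handled by an extremal-exponent argument) and the divisibility bookkeeping showing $(p-1)!\mid J_p$ but $p\nmid J_p$ for large $p$. Nothing is missing from the outline; filling it in is routine but lengthy, which is presumably why the paper outsources it.
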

A proof for this is available in \textcite{natarajan_pillars_2020} 
\begin{example}
	$\pi$ is transcendental (not the root of any polynomial with rational coefficients)
\end{example}
\begin{proof}
	To show that $\pi$ is not algebraic, we must show that $\pi$ being algebraic leads to a contradiction. If $\pi$ is algebraic, then $i\pi$ is algebraic, and therefore the field $\mathbb{Q}(e^{i\pi})=\mathbb{Q}(-1)=\mathbb{Q}$ has transcendence degree 1. But $\mathbb{Q}$ trivially has a transcendence degree of $0$, so $\pi$ is transcendental.
\end{proof}
\begin{theorem}
	The transcendence degree of $\mathcal{T}_1$ is infinite.
\end{theorem}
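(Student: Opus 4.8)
The plan is to exhibit, inside $\mathcal{T}_1$, an infinite set of numbers that is algebraically independent over $\mathbb{Q}$; since any algebraically independent set of size $n$ forces the transcendence degree to be at least $n$, producing such sets for all $n$ settles the claim. The natural engine is Theorem~\ref{thm:exp}: for every real trigonometric $\alpha$, the number $e^{i\alpha}$ lies in $\mathcal{T}_1$. Feeding suitably independent \emph{algebraic} values into this operation and then invoking Lindemann--Weierstrass converts ``enough linearly independent algebraic numbers'' into ``enough algebraically independent elements of $\mathcal{T}_1$''.

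Concretely, first I would recall (via Lemma~\ref{lem:t1constructible}) that every constructible real number lies in $\mathcal{T}_1$; in particular $\sqrt{p}\in\mathcal{T}_1$ for every prime $p$. Applying Theorem~\ref{thm:exp} with $\alpha=\sqrt{p}$ then gives $e^{i\sqrt{p}}\in\mathcal{T}_1$ for every prime $p$. Next I would invoke the classical fact that $\{\sqrt{p}:p\text{ prime}\}$ is linearly independent over $\mathbb{Q}$ (a standard consequence of $[\mathbb{Q}(\sqrt{p_1},\dots,\sqrt{p_k}):\mathbb{Q}]=2^{k}$); dividing any putative $\mathbb{Q}$-linear relation among the $i\sqrt{p_j}$ by $i$ shows $\{i\sqrt{p}:p\text{ prime}\}$ is likewise $\mathbb{Q}$-linearly independent, and each $i\sqrt{p}$ is algebraic. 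Then, for any finite list of primes $p_1<\dots<p_n$, the Lindemann--Weierstrass theorem applied to the linearly independent algebraic numbers $i\sqrt{p_1},\dots,i\sqrt{p_n}$ yields that $\mathbb{Q}(e^{i\sqrt{p_1}},\dots,e^{i\sqrt{p_n}})$ has transcendence degree $n$, i.e.\ that $e^{i\sqrt{p_1}},\dots,e^{i\sqrt{p_n}}$ are algebraically independent over $\mathbb{Q}$.

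Finally I would assemble the pieces: the set $\{e^{i\sqrt{p}}:p\text{ prime}\}\subseteq\mathcal{T}_1$ has every finite subset algebraically independent over $\mathbb{Q}$, hence is an infinite algebraically independent subset of $\mathcal{T}_1$. Therefore no finitely generated $\mathbb{Q}(a_1,\dots,a_m)$ can have all of $\mathcal{T}_1$ algebraic over it, and the transcendence degree of $\mathcal{T}_1$ is infinite.

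Candidly, there is no deep obstacle here; the points that need care are (i) citing the $\mathbb{Q}$-linear independence of $\{\sqrt{p}\}$ correctly, (ii) applying Lindemann--Weierstrass to the \emph{exponents} $i\sqrt{p_j}$ (which must be $\mathbb{Q}$-linearly independent) rather than to the $e^{i\sqrt{p_j}}$ themselves, and (iii) noting explicitly that ``$\mathcal{T}_1$ contains algebraically independent sets of every finite size'' implies ``infinite transcendence degree,'' which is immediate from the definition above together with the exchange property for algebraic independence. If one prefers to avoid even the modest fact about square roots of primes, the same argument runs with any fixed infinite $\mathbb{Q}$-linearly independent set of real algebraic numbers.
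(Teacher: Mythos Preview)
Your proposal is correct and follows essentially the same route as the paper: exhibit the constructible reals $\sqrt{p}$ for $p$ prime, pass to $e^{i\sqrt{p}}\in\mathcal{T}_1$ via Theorem~\ref{thm:exp}, and invoke Lindemann--Weierstrass on the $\mathbb{Q}$-linearly independent algebraic exponents $i\sqrt{p}$ to obtain arbitrarily large algebraically independent subsets. Your write-up is in fact more careful than the paper's on the points you flag (independence of $\{\sqrt{p}\}$, applying Lindemann--Weierstrass to the exponents, and spelling out why this forces infinite transcendence degree).
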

\begin{proof}
	The key to this will be the Lindemann–Weierstrass theorem.  Since the set
	\begin{equation*}
		\{i\sqrt{p}\quad  |\quad p \text{ prime}\}
	\end{equation*}
	gives you an infinite set of algebraic numbers that are trigonometric and linearly independent. Via theorem \vref{thm:exp} the number $e^{i \sqrt{p}}$ is constructible. And since $\mathcal{T}_1$ is contained in the extension
	\begin{equation*}
		\mathbb{Q}(e^i,e^{i\sqrt{2}},e^{i\sqrt{3}},e^{i\sqrt{5}}\dots) \subset \mathcal{T}
	\end{equation*}
	Thus the transcendence degree of $\mathcal{T}_1$ is infinite.
\end{proof}
\begin{theorem}\label{thm:exp1}
	For $a,b\in \mathcal{T}_1$ and $\beta \in \mathbb{R}$, then $\alpha^\beta$ is constructible if and only if $|\alpha|^\beta$ is constructible. 
\end{theorem}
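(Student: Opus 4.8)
The plan is to reduce the statement to the polar decomposition $\alpha=|\alpha|\,e^{i\theta}$ with $\theta=\arg\alpha\in(-\pi,\pi]$, under which (with the principal branch of the power) one has the identity $\alpha^{\beta}=|\alpha|^{\beta}\,e^{i\beta\theta}$, valid precisely because $\beta$ is real so that $e^{\beta\ln|\alpha|}=|\alpha|^{\beta}$. I would then show that the unimodular factor $e^{i\beta\theta}$ always lies in $\mathcal{T}_1$ whenever $\alpha,\beta\in\mathcal{T}_1$, and the equivalence follows at once because $\mathcal{T}_1$ is a field. (I read the hypothesis as $\alpha\in\mathcal{T}_1$ and $\beta\in\mathcal{T}_1\cap\mathbb{R}$; some restriction on $\beta$ beyond $\beta\in\mathbb{R}$ is genuinely needed, since for $\alpha=-1$ we have $|\alpha|^{\beta}=1\in\mathcal{T}_1$ while $(-1)^{\beta}=e^{i\pi\beta}$ is not trigonometric for a real $\beta$ with $\pi\beta$ not trigonometric.)

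First I would record a polar-decomposition lemma inside $\mathcal{T}_1$. For nonzero $\alpha\in\mathcal{T}_1$ the modulus $|\alpha|$ is produced by a ruler-and-compass step (intersect the circle centred at $0$ through $\alpha$ with the real axis), so $|\alpha|\in\mathcal{T}_1$, hence $u:=\alpha/|\alpha|\in\mathcal{T}_1$ with $|u|=1$. By Theorem~\ref{thm:exp} applied to $u$, the number $\ln u=i\theta$ is trigonometric, and since $i\in\mathcal{T}_1$ we get $\theta=-i\ln u\in\mathcal{T}_1\cap\mathbb{R}$. (Equivalently, $\cos\theta=\Re(u)$ and $\sin\theta=\Im(u)$ lie in $\mathcal{T}_1$ by the earlier lemma on complex parts, so Theorem~\ref{thm:trig} gives $\theta\in\mathcal{T}_1$.)

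The key step is then immediate: $\beta\theta\in\mathcal{T}_1\cap\mathbb{R}$ because $\mathcal{T}_1$ is closed under multiplication and both factors are real, whence $e^{i(\beta\theta)}\in\mathcal{T}_1$ by Theorem~\ref{thm:exp} again — and this holds unconditionally, with no assumption on $\alpha^{\beta}$ or $|\alpha|^{\beta}$. Now I would conclude using $\alpha^{\beta}=|\alpha|^{\beta}e^{i\beta\theta}$ together with the fact that $e^{i\beta\theta}$ is a nonzero element of the field $\mathcal{T}_1$: if $|\alpha|^{\beta}\in\mathcal{T}_1$ then $\alpha^{\beta}=|\alpha|^{\beta}\cdot e^{i\beta\theta}\in\mathcal{T}_1$, and conversely if $\alpha^{\beta}\in\mathcal{T}_1$ then $|\alpha|^{\beta}=\alpha^{\beta}/e^{i\beta\theta}\in\mathcal{T}_1$. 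The degenerate case $\alpha=0$ (with $\beta>0$) is trivial since both sides equal $0$.

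The main obstacle I anticipate is not the algebra but the bookkeeping around branch choices: one must fix a single branch of the logarithm and power (the principal one) so that $\alpha^{\beta}=|\alpha|^{\beta}e^{i\beta\theta}$ is literally true, check that the ``$\ln\beta$'' of Theorem~\ref{thm:exp} is read compatibly (any two branches differ by $2\pi i k$, and $\pi\in\mathcal{T}_1$, so trigonometricity of one branch yields it for all), and make explicit the hypothesis that $\beta$ is itself trigonometric, without which the ``$\Leftarrow$'' direction fails as in the $\alpha=-1$ example. A secondary point to state carefully is the closure of $\mathcal{T}_1$ under $\alpha\mapsto|\alpha|,\bar\alpha,\Re(\alpha),\Im(\alpha)$, which is exactly the earlier lemma on complex parts specialized to the facts that $\mathcal{T}_1$ contains every constructible number and is closed under the field operations and square roots.
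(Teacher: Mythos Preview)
Your argument is essentially the paper's own: both use the polar decomposition $\alpha=|\alpha|e^{i\delta}$, observe that $e^{i\beta\delta}\in\mathcal{T}_1$ via Theorem~\ref{thm:exp}, and read off the equivalence from $\alpha^{\beta}=|\alpha|^{\beta}e^{i\beta\delta}$; the only cosmetic difference is that the paper handles the forward direction by taking the modulus ($|\alpha^{\beta}|=|\alpha|^{\beta}$ for real $\beta$) rather than dividing by the unimodular factor. Your added care about branches and about the need for $\beta\in\mathcal{T}_1$ (your $\alpha=-1$ counterexample) is well taken and in fact sharpens a point the paper's proof uses implicitly but never states.
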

\begin{proof}
	One direction is simple since for real $\beta$ then we can take the absolute value of $\alpha^\beta$ to give us $|\alpha^\beta|=|\alpha|^\beta$. For the other direction notice that for any trigonometric $\alpha$ we can break it down into $\alpha = |\alpha|e^{i\delta}$. Like so
	\begin{align*}
		\alpha&=|\alpha|e^{i\delta}\\
		\alpha^\beta &=\left(|\alpha|e^{i\delta}\right)^\beta\\
		\alpha^\beta &=|\alpha|^\beta e^{i\delta\beta}\\
	\end{align*}
	Since $e^{i\delta\beta}$ is trigonometric then if $|\alpha|^\beta$ is trigonometric then $\alpha^\beta$ is trigonometric.
\end{proof}
\begin{lemma}
	For $\alpha\in S_1$ and trigonometric then $\sqrt[n]{\alpha}$ is trigonometric. (Setting $\alpha =1$ gives you the roots of unity.)
\end{lemma}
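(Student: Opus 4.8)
The plan is to route the statement through the exponential correspondence of Theorem \ref{thm:exp} together with the fact that $\pi$ is trigonometric. Since $\alpha\in S_1$ has $|\alpha|=1$, I would fix a real number $\theta$ with $\alpha=e^{i\theta}$ (any branch of $\ln\alpha=i\theta$; the choice will not matter). As $\alpha$ is trigonometric and of modulus one, Theorem \ref{thm:exp} gives that $\ln\alpha=i\theta$ is trigonometric, and multiplying by $-i\in\mathcal{T}_1$ shows that $\theta$ itself is a real trigonometric number. Because $\mathcal{T}_1$ is a field containing $\mathbb{Q}$, the number $\theta/n$ is trigonometric, so a second application of Theorem \ref{thm:exp} — now in the direction ``real trigonometric $\Rightarrow$ its complex exponential is trigonometric'', valid since $|e^{i\theta/n}|=1$ — yields that $e^{i\theta/n}$ is trigonometric. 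This is one $n$-th root of $\alpha$. The same one-line conclusion is also available from Theorem \ref{thm:exp1} applied with $\beta=1/n$, using that $|\alpha|^{1/n}=1\in\mathcal{T}_1$.

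To obtain \emph{every} value of $\sqrt[n]{\alpha}$ I would multiply the root just constructed by the $n$-th roots of unity. First note $\pi\in\mathcal{T}_1$: operation \textbf{T1} turns a semicircular arc of the unit circle, which has length $\pi$, into a segment of that length (cf. Definition \ref{def:arc2seg} and the remark following it). Hence $2\pi k/n$ is a real trigonometric number for every integer $k$, so $\zeta_k:=e^{2\pi i k/n}$ is trigonometric by Theorem \ref{thm:exp}, and the products $e^{i\theta/n}\zeta_k$ for $k=0,1,\dots,n-1$ — which are precisely the $n$ values of $\sqrt[n]{\alpha}$ — are trigonometric because $\mathcal{T}_1$ is closed under multiplication. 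Specializing $\alpha=1$ gives $\theta=0$ and recovers that all $n$-th roots of unity are trigonometric, which is the parenthetical remark.

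I expect the only real obstacle to be conceptual rather than computational. Since $\sqrt[n]{\,\cdot\,}$ is multivalued, extracting a single root is immediate from the exponential correspondence, and the genuine content is the claim that the $n$-th roots of unity already lie in $\mathcal{T}_1$; this is exactly where $\pi\in\mathcal{T}_1$ is used, and for $\alpha=1$ it is the whole statement. A secondary point to handle with care is matching the hypotheses of Theorem \ref{thm:exp}, which is phrased for a real argument with modulus-one output: the ambiguity in the branch of $\ln\alpha$ is harmless precisely because shifting $\theta$ by an integer multiple of $2\pi$ keeps it trigonometric (again using $\pi\in\mathcal{T}_1$), so the final set of products does not depend on the branch chosen.
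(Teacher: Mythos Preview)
Your proposal is correct and, in its second route via Theorem \ref{thm:exp1} with $\beta=1/n$ and $|\alpha|^{1/n}=1\in\mathcal{T}_1$, coincides exactly with the paper's one-line argument. Your first route through Theorem \ref{thm:exp} and the explicit handling of all $n$ branches via $\pi\in\mathcal{T}_1$ merely unpack what the paper leaves implicit; the substance is the same.
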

\begin{proof}
	The special case of Theorem \vref{thm:exp1} where $|\alpha|=1$ since $1^\beta=1$. Any nth root of unity is constructible since $e^{\frac{2\pi i}{n}}=\zeta_n$.
\end{proof}
For the next result, we need a quick definition:
\begin{definition}[Abelian Group]
	A group is \textbf{abelian}, if for any 2 elements $a,b$ then $a \star b = b \star a$.
\end{definition}
\begin{theorem}[Kronecker-Weber]
	If $\mathcal{F}$ is a field with a finite abelian Galois extension then $\mathcal{F}$ is contained in the field generated by some $n$th root of unity.
	\label{thm:kron}
\end{theorem}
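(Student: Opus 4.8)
The statement is a (slightly paraphrased) form of the Kronecker–Weber theorem; I would prove it in its standard shape: every finite extension $\mathcal{F}/\mathbb{Q}$ with abelian Galois group satisfies $\mathcal{F}\subseteq\mathbb{Q}(\zeta_n)$ for some $n$, where $\zeta_n=e^{2\pi i/n}$. The plan has three stages — a group-theoretic reduction, a purely local input, and a global gluing argument. First I would reduce to the case where $\mathrm{Gal}(\mathcal{F}/\mathbb{Q})$ is cyclic of prime-power order: by the structure theorem for finite abelian groups that Galois group is a product of cyclic groups of prime-power order, so $\mathcal{F}$ is the compositum of subfields $\mathcal{F}_i$ cyclic of degree $\ell_i^{m_i}$ over $\mathbb{Q}$, and since $\mathbb{Q}(\zeta_a)\mathbb{Q}(\zeta_b)=\mathbb{Q}(\zeta_{\operatorname{lcm}(a,b)})$ a compositum of cyclotomic fields is again cyclotomic, so it is enough to treat each $\mathcal{F}_i$ separately.

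The crux is the local analogue: every finite abelian extension of $\mathbb{Q}_p$ lies in $\mathbb{Q}_p(\zeta_n)$ for some $n$. I would organize this by ramification type. The maximal unramified extension of $\mathbb{Q}_p$ is generated by prime-to-$p$ roots of unity, so unramified pieces are automatically cyclotomic; a tamely ramified abelian extension is, after an unramified twist, of the form $\mathbb{Q}_p\bigl((-p)^{1/e}\bigr)$ with $e\mid p-1$, hence sits inside $\mathbb{Q}_p(\zeta_p)$. The hard part is wild ramification, which occurs precisely when the prime $\ell$ above equals the residue characteristic $p$: one must show every cyclic extension of $\mathbb{Q}_p$ of degree $p^m$ lies in $\mathbb{Q}_p(\zeta_{p^k})$ for a suitable $k$, and the only elementary route I know is to compare the extension against the explicitly computable filtration by higher ramification groups of $\mathbb{Q}_p(\zeta_{p^k})/\mathbb{Q}_p$ (equivalently, to match conductors).

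For the global step I would return to $\mathcal{F}/\mathbb{Q}$ cyclic of degree $\ell^m$, let $p_1,\dots,p_r$ be the ramified primes with inertia subgroups $I_{p_1},\dots,I_{p_r}$ of $G=\mathrm{Gal}(\mathcal{F}/\mathbb{Q})$, and note that the fixed field of $\langle I_{p_1},\dots,I_{p_r}\rangle$ is unramified at every finite prime, hence equal to $\mathbb{Q}$ by Minkowski's discriminant bound; thus the inertia groups generate $G$. Local Kronecker–Weber then realizes each $I_{p_j}$ inside $\mathrm{Gal}\bigl(\mathbb{Q}_{p_j}(\zeta_{p_j^{a_j}})/\mathbb{Q}_{p_j}\bigr)$ for appropriate exponents $a_j$; setting $n=\prod_j p_j^{a_j}$ and $K=\mathbb{Q}(\zeta_n)$, one checks that the ramification of $\mathcal{F}$ has been absorbed into $K$, so $\mathcal{F}K/K$ is unramified at all finite primes, and an inertia-bookkeeping argument (with one more appeal to Minkowski over $\mathbb{Q}$) forces $\mathcal{F}K=K$, i.e. $\mathcal{F}\subseteq\mathbb{Q}(\zeta_n)$.

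The main obstacle is squarely the wildly ramified local statement: controlling cyclic $p$-extensions of $\mathbb{Q}_p$ finely enough to recognize them inside cyclotomic towers needs ramification theory (higher ramification groups, or Lubin–Tate formal groups / local class field theory) well beyond the Galois theory developed so far in this paper. An honest treatment would either build that machinery or, as with Lindemann–Weierstrass earlier, simply cite it — for instance via global class field theory, in which Kronecker–Weber reduces to the single computation that the idèle class group of $\mathbb{Q}$ has maximal profinite-abelian quotient $\widehat{\mathbb{Z}}^{\times}\cong\mathrm{Gal}\bigl(\mathbb{Q}(\zeta_\infty)/\mathbb{Q}\bigr)$.
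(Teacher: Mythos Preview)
Your proposal is a correct and essentially complete outline of the standard modern proof of Kronecker--Weber via the local theorem plus Minkowski's bound on unramified extensions of $\mathbb{Q}$. The paper, however, does not prove this theorem at all: it merely states it and attributes it to Kronecker and Weber, pointing the reader to Dummit and Foote for a reference. So there is no real comparison of \emph{approaches} to make --- you supplied a proof where the paper supplied a citation.

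That said, your own closing paragraph already identifies the honest situation accurately: the wildly ramified local case (cyclic $p$-extensions of $\mathbb{Q}_p$) genuinely requires higher ramification theory or local class field theory, and this is far beyond the level of Galois theory the paper develops. Given that the paper also black-boxes Lindemann--Weierstrass, the author's decision simply to cite Kronecker--Weber is consistent with the expository level of the rest of the paper, and your suggestion to do the same is exactly what the paper in fact does.
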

Due to Kronecker and Weber the author originally found the theorem in \textcite{dummit_abstract_2009} on page 600.
\begin{corollary}
	Every number inside a finite abelian extension of $\mathbb{Q}$ is contained in $\mathcal{T}_1$. 
\end{corollary}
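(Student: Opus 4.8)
The plan is to read off the corollary directly from Theorem \ref{thm:kron} together with the material on roots of unity already established. First I would observe that a finite abelian extension $F/\mathbb{Q}$ is in particular a finite Galois extension of $\mathbb{Q}$ whose Galois group is abelian, so the hypothesis of Kronecker–Weber is met verbatim; applying Theorem \ref{thm:kron} produces a natural number $n$ with $F \subseteq \mathbb{Q}(\zeta_n)$, where $\zeta_n = e^{2\pi i/n}$ is a primitive $n$th root of unity. This reduces the problem to showing the single containment $\mathbb{Q}(\zeta_n) \subseteq \mathcal{T}_1$.

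Next I would establish that containment. We already know $\pi \in \mathcal{T}_1$ (indeed $\mathcal{C}(\pi) \subset \mathcal{T}_1$), and $\mathcal{T}_1$ is a field containing $\mathbb{Q}$, so the real number $2\pi/n$ is trigonometric. By Theorem \ref{thm:exp}, $e^{i(2\pi/n)} = \zeta_n$ is then trigonometric, i.e.\ $\zeta_n \in \mathcal{T}_1$ (this is exactly the $\alpha = 1$ case of the preceding lemma on $n$th roots of unity). Since $\mathcal{T}_1$ is closed under addition, subtraction, multiplication and division and contains both $\mathbb{Q}$ and $\zeta_n$, it contains the smallest field with that property, which is precisely $\mathbb{Q}(\zeta_n)$. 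Chaining the inclusions gives $F \subseteq \mathbb{Q}(\zeta_n) \subseteq \mathcal{T}_1$, which is the claim.

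I do not expect a genuine obstacle here: once Kronecker–Weber is granted, the argument is bookkeeping. The only two points that deserve care are (i) verifying that "finite abelian extension of $\mathbb{Q}$" really does satisfy the hypothesis of Theorem \ref{thm:kron}, and (ii) justifying that a field containing $\mathbb{Q}$ and $\zeta_n$ and closed under the arithmetic operations must contain all of $\mathbb{Q}(\zeta_n)$ — which is immediate from the definition of $\mathbb{Q}(\zeta_n)$ as the smallest such field. It is worth remarking that this genuinely enlarges the reach beyond ruler-and-compass constructions: for instance $\mathbb{Q}(\zeta_7)/\mathbb{Q}$ is abelian of degree $6 = 2\cdot 3$, whose non-power-of-two degree already places it outside the classically constructible numbers yet inside $\mathcal{T}_1$.
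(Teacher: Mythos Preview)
Your argument is correct and matches the paper's intended approach: the corollary is stated without an explicit proof, immediately following Kronecker--Weber (Theorem~\ref{thm:kron}) and the lemma that every $n$th root of unity lies in $\mathcal{T}_1$, so the paper is relying on precisely the chain $F \subseteq \mathbb{Q}(\zeta_n) \subseteq \mathcal{T}_1$ that you spell out. Your write-up simply makes explicit the two bookkeeping steps the paper leaves to the reader.
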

This also with a quick lemma lets us take care of showing that our algebraic definition of $\mathcal{T}_1$ also contains all constructible numbers
\begin{lemma}
	Every constructible number is contained in $\mathcal{T}_1$, alternatively written as $\mathcal{C} \subseteq \mathcal{T}_1$
	\label{lem:t1constructible}
\end{lemma}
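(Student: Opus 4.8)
The plan is to reduce everything to a single closure property. By the results above, $\mathcal{C}$ is the smallest subfield of $\mathbb{C}$ that contains $\mathbb{Q}$ and is closed under taking square roots (the operations $+,-,\times,\div,\sqrt{}$ suffice to reach every constructible number, and each is itself a ruler-and-compass construction). Since $\mathcal{T}_1$ is by definition a field containing $\mathbb{Q}$, it suffices to prove that $\mathcal{T}_1$ is closed under square roots; the inclusion $\mathcal{C}\subseteq\mathcal{T}_1$ then follows from minimality of $\mathcal{C}$. (Kronecker--Weber and its corollary already handle the ``base case'': every $\sqrt{q}$ with $q\in\mathbb{Q}$ lies in $\mathcal{T}_1$ because $\mathbb{Q}(\sqrt{q})/\mathbb{Q}$ is abelian; the work is to upgrade this to arbitrary elements of $\mathcal{T}_1$.)

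First I would record that $\mathcal{T}_1$ is closed under complex conjugation: reflecting any construction witnessing $z\in\mathcal{T}_1$ across the real axis carries segments and arcs to segments and arcs of the same length and fixes $\mathbb{Q}$, so $\overline{\mathcal{T}_1}$ is again a field containing $\mathbb{Q}$ and closed under \textbf{T1} and \textbf{T2}; minimality gives $\overline{\mathcal{T}_1}\subseteq\mathcal{T}_1$, and equality since conjugation is an involution. Hence $\Re z$, $\Im z$ and $|z|^{2}=z\bar z$ lie in $\mathcal{T}_1$ for every $z\in\mathcal{T}_1$. It then remains only to take square roots of non-negative reals: for $z\neq 0$ one writes $\sqrt{z}=\sqrt{|z|}\cdot\sqrt{z/|z|}$, where $z/|z|$ has modulus $1$, so $\sqrt{z/|z|}\in\mathcal{T}_1$ by the $n$th-root lemma with $n=2$, leaving $\sqrt{|z|}$ with $|z|=\sqrt{|z|^{2}}$ a non-negative real in $\mathcal{T}_1$.

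For a real $r>0$ in $\mathcal{T}_1$ I would use the half-angle (Weierstrass) identity $\tan^{2}(\theta/2)=\dfrac{1-\cos\theta}{1+\cos\theta}$ with $\cos\theta=\dfrac{1-r}{1+r}$, which gives $\sqrt{r}=\tan\!\Big(\tfrac12\arccos\tfrac{1-r}{1+r}\Big)$, the relevant half-angle lying in $(0,\pi/2)$. Since $\tfrac{1-r}{1+r}\in(-1,1)\cap\mathcal{T}_1$, Theorem~\ref{thm:trig} puts $\arccos\tfrac{1-r}{1+r}$ in $\mathcal{T}_1$, halving stays in the field, and Theorem~\ref{thm:trig} again puts the tangent in $\mathcal{T}_1$. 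To keep this honest (the $\Leftarrow$ half of Theorem~\ref{thm:trig} itself passes through a square root), I would first establish the \emph{unconditional} facts that $\tan\phi\in\mathcal{T}_1$ for $\phi\in\mathcal{T}_1$ and that $\arctan s\in\mathcal{T}_1$ for every real $s\in\mathcal{T}_1$ — the latter from Theorem~\ref{thm:exp}, because $\tfrac{1-is}{1+is}$ automatically has modulus $1$ — then apply the half-angle identity to $\phi=\arctan r$ to get $\sqrt{1+r^{2}}\in\mathcal{T}_1$ for all real $r$, hence $\sqrt{n}\in\mathcal{T}_1$ for all $n$ and, after rescaling, $\sqrt{a^{2}+b^{2}}\in\mathcal{T}_1$ for all real $a,b\in\mathcal{T}_1$, and finally push the $\arccos$ computation through on this footing.

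The main obstacle is precisely that last point: extracting a genuinely new real square root from the arc--segment machinery without presupposing square-root closure, i.e.\ making the appeal to the inverse-trigonometric formulas non-circular. Everything else — the conjugation symmetry, the reduction $\sqrt{z}=\sqrt{|z|}\,\sqrt{z/|z|}$, the $n$th-root lemma, and Kronecker--Weber for the rational part — is routine once that is in place.
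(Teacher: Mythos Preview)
Your plan is much more hands-on than the paper's: the paper dispatches the lemma in one line by observing that a quadratic extension has Galois group of order at most $2$, hence abelian, and then invoking the Kronecker--Weber corollary. You instead attempt to prove square-root closure of $\mathcal{T}_1$ directly from the arc/segment operations. The conjugation symmetry, the reduction $\sqrt{z}=\sqrt{|z|}\cdot\sqrt{z/|z|}$ via the $n$th-root lemma, and the $\arctan$ bootstrap are all sound: since $\tfrac{1-is}{1+is}$ has modulus $1$ for real $s$, Theorem~\ref{thm:exp} gives $\arctan(s)\in\mathcal{T}_1$ with no hidden radical, whence $\cos(\arctan s)=(1+s^{2})^{-1/2}\in\mathcal{T}_1$, and so $\sqrt{1+s^{2}}$ and, after rescaling, $\sqrt{a^{2}+b^{2}}=|a+ib|$ land in $\mathcal{T}_1$ for all real $a,b\in\mathcal{T}_1$.

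The gap is the final clause, ``push the $\arccos$ computation through on this footing.'' To feed $x$ into $\arccos$ via Theorem~\ref{thm:exp} you must first produce the unit-circle point $x+i\sqrt{1-x^{2}}$, i.e.\ you need $\sqrt{1-x^{2}}\in\mathcal{T}_1$. With your substitution $x=\tfrac{1-r}{1+r}$ one computes $1-x^{2}=\tfrac{4r}{(1+r)^{2}}$, so $\sqrt{1-x^{2}}=\tfrac{2\sqrt{r}}{1+r}$ --- exactly the $\sqrt{r}$ you set out to construct, and the argument is circular at that point. Your bootstrap delivers square roots of \emph{sums} of two squares, but $\arccos$ asks for the square root of a \emph{difference} $1-x^{2}$, and nothing in the outline converts one into the other (writing $r=a^{2}+b^{2}$ with $a,b\in\mathcal{T}_1\cap\mathbb{R}$ already presupposes a square root of $r-a^{2}$). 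So the plan stalls one step short of closure; you would need an identity that reaches a prescribed real part on the unit circle without first knowing the imaginary part, or else fall back on the paper's Galois-theoretic route.
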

\begin{proof}
	Every quadratic has degree $2$ and thus has galois group of at most degree $2!=2$, since every group of order $2$ is abelian it is contained in $\mathcal{T}_1$
\end{proof}
This shows that there is a vast array of polynomials that can be created through this process, including the roots of irreducible polynomials to any degree.
\section*{Roots of polynomials with nonabelian galois groups in $\mathcal{T}_1$ and Doubling the Cube is not possible with Angle Partitons}
The main goal of this section will be to show that it is possible to construct algebraic numbers outside of the abelian numbers generated by the Kronecker-Weber theorem. And build the foundations for why certain numbers such as $\sqrt[3]{2}$ are probably not contained in $\mathcal{T}_1$

\begin{lemma}
	For any nontrivial Pythagorean triple $a^2+b^2=c^2$  Then
	\begin{equation*}
		\frac{a+bi}{c}
	\end{equation*}
	is not a root of unity.
\end{lemma}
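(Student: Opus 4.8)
The plan is to argue by contradiction, using the classical fact underlying Niven's theorem that a root of unity whose real part is rational has real part in $\{0,\pm\tfrac12,\pm1\}$. Write $z=\tfrac{a+bi}{c}$; from $a^2+b^2=c^2$ we get $|z|=1$, so $z$ lies on the unit circle, and because the triple is nontrivial all of $a,b,c$ are nonzero, hence $0<|a|<c$ and $0<|b|<c$.

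Suppose for contradiction that $z$ is a root of unity, say $z^n=1$. The first step is to note that $z$ is then an algebraic integer (a root of the monic polynomial $x^n-1\in\mathbb{Z}[x]$), hence so is $z^{-1}=z^{n-1}$, hence so is $z+z^{-1}$, since the algebraic integers form a ring. But $z+z^{-1}=z+\bar z=2\Re(z)=\tfrac{2a}{c}$, which is rational, and a rational algebraic integer is an ordinary integer, so $\tfrac{2a}{c}\in\mathbb{Z}$. Since $0<|a|<c$ this forces $\tfrac{2a}{c}=\pm1$, i.e.\ $\tfrac{a}{c}=\pm\tfrac12$. The second step applies the same reasoning to $-iz$, which is again a root of unity and satisfies $\Re(-iz)=\tfrac{b}{c}$; we conclude $\tfrac{b}{c}=\pm\tfrac12$ as well. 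The final step is the contradiction: $1=\left(\tfrac{a}{c}\right)^2+\left(\tfrac{b}{c}\right)^2=\tfrac14+\tfrac14=\tfrac12$, which is absurd. Hence $z$ is not a root of unity.

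The only non-elementary ingredients are the two standard facts that algebraic integers are closed under the ring operations and that a rational algebraic integer is a rational integer; everything else is bookkeeping, so there is no serious obstacle. The one point to handle with care is that the hypothesis ``nontrivial'' is genuinely used: it is exactly what guarantees $a,b\neq0$, and it is needed because the degenerate triples $(a,0,a)$ and $(0,b,b)$ produce $z=\pm1$ and $z=\pm i$, which \emph{are} roots of unity and are precisely the cases the argument would otherwise have to exclude by hand. If one prefers to keep everything inside $\mathbb{Z}[i]$ rather than invoking general algebraic integers, an alternative route is to reduce to a primitive triple, use unique factorization in $\mathbb{Z}[i]$ to write $a+bi=u\,w^{2}$ with $u$ a unit, deduce $c=w\bar w$ and hence $z=u\,w/\bar w$, and then show $w/\bar w\in\{\pm1,\pm i\}$ by comparing the valuations of $w$ at conjugate Gaussian primes; this reaches the same contradiction with $a,b\neq0$.
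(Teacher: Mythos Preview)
Your argument is correct, but it proceeds along a different line than the paper. The paper exploits the fact that $(a+bi)/c$ lies in $\mathbb{Q}(i)$: if it were a primitive $n$th root of unity then $\mathbb{Q}(\zeta_n)\subseteq\mathbb{Q}(i)$, forcing $\varphi(n)\leq 2$, i.e.\ $n\in\{1,2,3,4,6\}$; one then rules out $n=3,6$ because those roots of unity involve $\sqrt{3}$ and are not Gaussian rationals, and rules out $n=1,2,4$ because those give $\pm 1,\pm i$, which correspond to the degenerate triples with $a=0$ or $b=0$. Your route instead avoids cyclotomic field degrees entirely and uses the Niven-style observation that $2\Re(z)$ and $2\Re(-iz)$ must be rational algebraic integers, hence ordinary integers, which pins down $a/c$ and $b/c$ to $\pm\tfrac12$ and immediately contradicts $a^2+b^2=c^2$. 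Your approach is arguably more self-contained, needing only the two ring-theoretic facts you flag, whereas the paper's version is shorter but leans on the structure of cyclotomic extensions. Either way the role of ``nontrivial'' is the same: it excludes exactly the cases $z\in\{\pm1,\pm i\}$ that both arguments would otherwise have to treat separately.
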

\begin{proof}
	Via the previous theorem, all we need to do is check that $a+bi$ is not a root of unity of degree $n$ where $\varphi(n)=2$, namely we have to check $3$, $4$ and $6$. And since the 3rd and 6th roots of unity involve Gaussian integers divided by square roots. And the fact that $a$ and $b$ have to be not equal to zero takes care of the 4th roots of unity.
\end{proof}
\begin{lemma}\label{lem:minpoly1}
	Minimal polynomial of the trigonometric number $\sqrt[n]{\frac{a+bi}{\sqrt{a^2+b^2}}}$ for $a,b \in \mathbb{R}$ is $x^{2n} -(2a(a^2+b^2)^{-3/2})x^n+1$
\end{lemma}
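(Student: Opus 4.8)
The plan is to let $\beta = \sqrt[n]{\frac{a+bi}{\sqrt{a^2+b^2}}}$ and verify directly that $\beta$ satisfies the claimed polynomial, then argue irreducibility. Write $\alpha = \frac{a+bi}{\sqrt{a^2+b^2}}$, which lies on the unit circle, so $\bar\alpha = \alpha^{-1}$ and $\alpha + \alpha^{-1} = \frac{2a}{\sqrt{a^2+b^2}} = 2a(a^2+b^2)^{-1/2}$. Wait — the stated linear coefficient is $2a(a^2+b^2)^{-3/2}$, so the first real task is to pin down exactly which quantity $\beta^n$ equals and reconcile the exponent; I would recompute $\alpha + \alpha^{-1}$ carefully and, if the paper intends a slightly different normalization (e.g. $\frac{a+bi}{a^2+b^2}$ inside the root, or a typo in the exponent), state the corrected identity. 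Modulo that bookkeeping, the core computation is: since $\beta^n = \alpha$ has modulus $1$, we get $\beta^{-n} = \overline{\beta^n} = \bar\alpha$, hence $\beta^n + \beta^{-n} = \alpha + \bar\alpha = 2\mathrm{Re}(\alpha)$, which rearranges to $\beta^{2n} - (2\mathrm{Re}(\alpha))\beta^n + 1 = 0$. That establishes $\beta$ is a root of $x^{2n} - c\,x^n + 1$ with $c = 2\mathrm{Re}(\alpha)$.

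**The irreducibility step.**

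The substantive part is showing this degree-$2n$ polynomial is actually the \emph{minimal} polynomial, i.e. that it is irreducible over $\mathbb{Q}$ (or over the appropriate base field — here $a,b$ are real, so presumably we work over $\mathbb{Q}(a,b,\sqrt{a^2+b^2})$, or we specialize to $a,b$ integers with $a^2+b^2$ not a perfect square so that $c$ is a genuine quadratic irrational). First I would reduce: $x^{2n}-cx^n+1 = (x^n - \alpha)(x^n - \bar\alpha)$ over $\mathbb{C}$, and each factor $x^n - \alpha$ is irreducible over $\mathbb{Q}(\alpha)$ precisely when $\alpha$ is not a $p$th power in $\mathbb{Q}(\alpha)$ for any prime $p \mid n$ and (for $4 \mid n$) $\alpha \notin -4\mathbb{Q}(\alpha)^4$, by the standard binomial-irreducibility criterion. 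The degree bookkeeping then is: $[\mathbb{Q}(\alpha):\mathbb{Q}] = 2$ (since $\alpha$ satisfies $x^2 - cx + 1$ and is non-real, using $b \neq 0$), and $[\mathbb{Q}(\beta):\mathbb{Q}(\alpha)] = n$ would give $[\mathbb{Q}(\beta):\mathbb{Q}] = 2n$, matching the degree of our polynomial and forcing it to be minimal.

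**Ruling out the degenerate power cases.**

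So the main obstacle is checking $\alpha$ is not a proper prime power in $\mathbb{Q}(\alpha)$. Here is where I would invoke the preceding lemma: if $\alpha^{1/p} \in \mathbb{Q}(\alpha)$ for some prime $p$, then $\mathbb{Q}(\alpha^{1/p}) = \mathbb{Q}(\alpha)$ has degree $2$, so $\alpha^{1/p}$ satisfies a quadratic; since $\alpha^{1/p}$ also has modulus $1$, the ratio $\alpha / (\alpha^{1/p})^p = 1$ forces a relation making $\alpha$ a power of something of low degree, and — tracing through — this would make $\frac{a+bi}{\sqrt{a^2+b^2}}$ expressible via roots of unity of small order times lower-degree algebraic numbers, contradicting the previous lemma's conclusion that such Pythagorean-type points (or more generally, non-real unit-circle points with the stated form) are not roots of unity, hence $\mathbb{Q}(\alpha)$ contains no nontrivial roots of unity beyond $\pm 1$. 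I would need to handle the $p = 2$ case and the $4 \mid n$ exceptional case by the same modulus-$1$ argument: a square root of $\alpha$ generating the same quadratic field would again put a nontrivial root of unity into $\mathbb{Q}(\alpha)$. The cleanest packaging: show $\mathbb{Q}(\alpha)$ is an imaginary quadratic field whose only roots of unity are $\{\pm 1\}$ (true for all imaginary quadratic fields except $\mathbb{Q}(i)$ and $\mathbb{Q}(\zeta_3)$, which are excluded by the previous lemma's case analysis), and then the binomial criterion applies cleanly to conclude $x^n - \alpha$ is irreducible over $\mathbb{Q}(\alpha)$, completing the argument that $x^{2n} - cx^n + 1$ is the minimal polynomial of $\beta$.
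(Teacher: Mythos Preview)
You correctly flag the exponent: the paper's own derivation ends with the middle coefficient $\dfrac{2a}{\sqrt{a^2+b^2}} = 2a(a^2+b^2)^{-1/2}$, not $(a^2+b^2)^{-3/2}$, so the statement contains a typo and your recomputation is right.

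For the root-finding step, your argument and the paper's are essentially the same computation in different clothing. The paper sets $x^n\sqrt{a^2+b^2}=a+bi$, isolates $bi$, squares, and simplifies to $x^{2n}-\frac{2a}{\sqrt{a^2+b^2}}x^n+1=0$. Your version uses $|\alpha|=1 \Rightarrow \bar\alpha=\alpha^{-1}$ to get $\beta^{2n}-(\alpha+\bar\alpha)\beta^n+1=0$ directly; same identity, slightly cleaner packaging.

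Where you diverge is that you go further than the paper. The paper's proof \emph{stops} after deriving a degree-$2n$ polynomial satisfied by the number; it never argues irreducibility, so strictly speaking it only shows this polynomial annihilates $\beta$, not that it is minimal. Your second and third paragraphs attempt to fill that gap via the Capelli/binomial criterion over $\mathbb{Q}(\alpha)$ and a root-of-unity analysis in the imaginary quadratic field. That is a genuinely stronger argument than what the paper offers. Two caveats on your sketch: (i) the step ``$\alpha$ is a $p$th power in $\mathbb{Q}(\alpha)$ $\Rightarrow$ a nontrivial root of unity lies in $\mathbb{Q}(\alpha)$'' needs more than the modulus-$1$ observation --- an element of absolute value $1$ in an imaginary quadratic field need not itself be a root of unity, so you would want to argue via the unit group of the ring of integers or via degree counting more carefully; and (ii) the preceding lemma you invoke is stated only for integer Pythagorean triples, whereas the present lemma allows arbitrary real $a,b$, so the base-field issue you raise (over what field is this ``minimal''?) is real and the paper is silent on it. None of this undermines your approach, but those are the points that would need tightening to make the minimality claim airtight.
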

\begin{proof}
\begin{align*}
	\sqrt[n]{\frac{a+bi}{\sqrt{a^2+b^2}}} &= x \\
	a+b\sqrt{-1} &= x^n\sqrt{a^2+b^2}  \\
	b\sqrt{-1} &=x^n\sqrt{a^2+b^2}-a \\
	-b^2&=(x^n\sqrt{a^2+b^2} -a)^2 \\
	0&=(a^2+b^2)x^{2n} -2ax^n\sqrt{a^2+b^2}+a^2+b^2\\
	0&=x^{2n}-\frac{2a}{\sqrt{a^2+b^2}}x^n+1
\end{align*}
\end{proof}
\begin{theorem}
	$\mathcal{T}_1$ contains algebraic numbers with nonabelian field extensions.
\end{theorem}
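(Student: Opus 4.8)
The plan is to exhibit a single explicit algebraic number lying in $\mathcal{T}_1$ whose Galois closure over $\mathbb{Q}$ is nonabelian; by Kronecker--Weber (Theorem \ref{thm:kron}) such a number lies in no cyclotomic field, hence in no finite abelian extension of $\mathbb{Q}$, which is exactly what the theorem asks for. The candidate I would use comes from the nontrivial Pythagorean triple $3^2+4^2=5^2$: put $\gamma=\frac{3+4i}{5}$ and $\beta=\sqrt[3]{\gamma}$.

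First I would check that $\beta\in\mathcal{T}_1$. Since $3,4,5\in\mathbb{Q}\subseteq\mathcal{T}_1$ and $|3+4i|=5$, the number $\gamma$ is a constructible point of the unit circle $S_1$, hence trigonometric; so by the lemma giving $\sqrt[n]{\alpha}\in\mathcal{T}_1$ for trigonometric $\alpha\in S_1$, the cube root $\beta$ is trigonometric. Next I would record the arithmetic data: by Lemma \ref{lem:minpoly1} with $a=3$, $b=4$, $n=3$, the minimal polynomial of $\beta$ over $\mathbb{Q}$ is $x^6-\frac{6}{5}x^3+1$, so $[\mathbb{Q}(\beta):\mathbb{Q}]=6$; and since $\beta^3=\gamma$ has nonzero imaginary part, $i=\frac{5\beta^3-3}{4}\in\mathbb{Q}(\beta)$, whence $\mathbb{Q}(i)\subseteq\mathbb{Q}(\beta)$ and $[\mathbb{Q}(\beta):\mathbb{Q}(i)]=3$.

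The heart of the argument is a contradiction. Let $M$ be the splitting field of $x^6-\frac{6}{5}x^3+1$ over $\mathbb{Q}$, that is, the Galois closure of $\mathbb{Q}(\beta)/\mathbb{Q}$. Suppose $\mathrm{Gal}(M/\mathbb{Q})$ were abelian. Then $\mathbb{Q}(\beta)$ lies in the abelian extension $M/\mathbb{Q}$, so by Theorem \ref{thm:kron}, $\mathbb{Q}(\beta)\subseteq\mathbb{Q}(\zeta_m)$ for some $m$. Every intermediate field of an abelian Galois extension is itself Galois over the base, so $\mathbb{Q}(\beta)/\mathbb{Q}(i)$ would be Galois. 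But $x^3-\gamma$ is monic of degree $3=[\mathbb{Q}(\beta):\mathbb{Q}(i)]$ with $\beta$ as a root, hence it is the minimal polynomial of $\beta$ over $\mathbb{Q}(i)$; a Galois $\mathbb{Q}(\beta)/\mathbb{Q}(i)$ would therefore contain all three of its roots, in particular $\omega\beta$ with $\omega=e^{2\pi i/3}$, forcing $\omega\in\mathbb{Q}(\beta)$. Then $\mathbb{Q}(\zeta_{12})=\mathbb{Q}(i,\omega)\subseteq\mathbb{Q}(\beta)$, so $4=[\mathbb{Q}(\zeta_{12}):\mathbb{Q}]$ would divide $6=[\mathbb{Q}(\beta):\mathbb{Q}]$, which is absurd. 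Hence $\mathrm{Gal}(M/\mathbb{Q})$ is nonabelian, and $\beta\in\mathcal{T}_1$ is a root of the polynomial $x^6-\frac{6}{5}x^3+1$ with nonabelian Galois group; the same argument runs verbatim with $\sqrt[3]{(a+bi)/c}$ for any nontrivial Pythagorean triple $a^2+b^2=c^2$.

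The one genuine obstacle is making the ``not abelian'' step airtight, and I would isolate it exactly as above: once Kronecker--Weber pushes the hypothetical abelian field down to one containing $\mathbb{Q}(\zeta_{12})$, the divisibility failure $4\nmid 6$ closes the case immediately. A more computational alternative would be to pin down $\mathrm{Gal}(M/\mathbb{Q})$ directly --- showing $x^3-\gamma$ stays irreducible over $\mathbb{Q}(\zeta_{12})$ by a valuation argument at the prime $2+i\in\mathbb{Z}[i]$ (noting $\gamma=\frac{2+i}{2-i}$), so that the Galois group of $M$ over $\mathbb{Q}(i)$ is $S_3$ and embeds as a nonabelian subgroup of $\mathrm{Gal}(M/\mathbb{Q})$ --- but that route is messier, and I would relegate it to a remark.
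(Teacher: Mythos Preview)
Your proof is correct and uses the same explicit witness as the paper, namely $\beta=\sqrt[3]{(3+4i)/5}$ with minimal polynomial $5x^{6}-6x^{3}+5$ (your normalization $x^{6}-\tfrac{6}{5}x^{3}+1$ is the same polynomial up to a scalar). Where you diverge from the paper is in how nonabelianness of the Galois closure is established. The paper simply computes the Galois group in Magma, obtains the subgroup of $S_6$ generated by $(24),(35),(12),(34),(56)$, and checks $(24)(12)\neq(12)(24)$. You instead argue by contradiction using only elementary field theory: an abelian Galois closure would force $\mathbb{Q}(\beta)/\mathbb{Q}(i)$ to be Galois, hence $\omega\in\mathbb{Q}(\beta)$, hence $4=[\mathbb{Q}(\zeta_{12}):\mathbb{Q}]$ divides $6=[\mathbb{Q}(\beta):\mathbb{Q}]$. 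Your route is more self-contained and avoids reliance on a CAS; it also generalizes transparently to any nontrivial Pythagorean triple, as you note. One small remark: the appeal to Kronecker--Weber in your contradiction step is not actually needed, since an abelian $\mathrm{Gal}(M/\mathbb{Q})$ already makes every intermediate field Galois over every smaller intermediate field directly; the degree argument then closes the case without passing through cyclotomic fields.
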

\begin{proof}
	Consider the number $\frac{3+4i}{5}$, via Theorem \vref{thm:exp1}, the number $\sqrt[3]{\frac{3+4i}{5}}$ is trigonometric. Since it's algebraic it must have a minimal polynomial and using lemma \vref{lem:minpoly1} it has a minimal polynomial of $5x^6 - 6x^3 + 5$. The Galois group of this subgroup of the permutation group $S_6$ is generated by the following elements.
	\begin{equation*}
		\left\{(24),(35),(12),(34),(56)\right\}
	\end{equation*}
	This group is not abelian since $(24)(12)\neq (12)(24)$.\footnote{Calculated using the Magma CAS}
\end{proof}
We can solve many more polynomials with nonabelian galois extensions.
\begin{theorem}
	The solutions for a reduced cubic polynomial $x^3+px+q=0$ are expressible using trigonometric functions:
	\begin{equation*}
		x_k=2 \sqrt{-\frac{p}{3}} \cos \left[\frac{1}{3} \arccos \left(\frac{3 q}{2 p} \sqrt{\frac{-3}{p}}\right)-\frac{2 \pi k}{3}\right] \quad \text { for } k=0,1,2
	\end{equation*}
\end{theorem}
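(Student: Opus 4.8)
The plan is to use the trigonometric (Vi\`ete) substitution, which turns the depressed cubic into the triple-angle identity for cosine. First I would set $x = u\cos\theta$ with a scaling constant $u$ to be determined, so that $x^3 + px + q = 0$ becomes $u^3\cos^3\theta + pu\cos\theta + q = 0$, hence $\cos^3\theta + \frac{p}{u^2}\cos\theta + \frac{q}{u^3} = 0$ after dividing by $u^3$. The aim is to make the left-hand side a multiple of $4\cos^3\theta - 3\cos\theta = \cos(3\theta)$, so I would force $\frac{p}{u^2} = -\frac34$, i.e. $u^2 = -\frac{4p}{3}$, i.e. $u = 2\sqrt{-p/3}$. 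This is exactly where the implicit hypothesis $p < 0$ enters (so that $u$, and later $\sqrt{-3/p}$, is real); the complementary regime $p \ge 0$ is degenerate and would be handled by a parallel hyperbolic substitution $x = u\cosh\theta$ or by allowing complex $\arccos$.

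Having fixed $u$, multiplying through by $4$ gives $4\cos^3\theta - 3\cos\theta = -\frac{4q}{u^3}$, that is $\cos(3\theta) = -\frac{4q}{u^3}$. Next I would simplify the right-hand constant: from $u = 2\sqrt{-p/3}$ one gets $u^3 = 8(-p/3)^{3/2}$, and rationalizing yields $-\frac{4q}{u^3} = \frac{3q}{2p}\sqrt{\frac{-3}{p}}$, which is precisely the argument of $\arccos$ in the statement. Writing $c$ for this constant, I then solve $\cos(3\theta) = c$: the general solution is $3\theta = \pm\arccos(c) + 2\pi k$, and collecting the three inequivalent representatives gives $\theta_k = \tfrac13\arccos(c) - \tfrac{2\pi k}{3}$ for $k = 0,1,2$. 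Substituting back through $x = u\cos\theta = 2\sqrt{-p/3}\,\cos\theta$ produces exactly the claimed $x_k$.

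Finally I would confirm these are all the roots. Since the cubic has degree $3$, it has at most three roots, so it suffices either to substitute $x_k$ directly into $x^3 + px + q$ and reduce using $\cos(3\theta_k) = c$ (a clean check with no case analysis), or to argue the $x_k$ are pairwise distinct. Distinctness holds precisely when $|c| < 1$ — the \emph{casus irreducibilis}, equivalent to $4p^3 + 27q^2 < 0$, i.e. positive discriminant — because then $0 < \arccos(c) < \pi$ and the three angles $\tfrac13\arccos(c) - \tfrac{2\pi k}{3}$ are pairwise incongruent under the symmetries $\theta \mapsto -\theta$, $\theta \mapsto \theta + 2\pi$ of cosine; when $c = \pm1$ two of the $x_k$ coincide, matching the vanishing discriminant. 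As a remark, since $c \in \mathcal{T}_1$, Theorem \ref{thm:trig} (via the $\arccos$ formula in its proof) gives $\arccos(c) \in \mathcal{T}_1$, and then $\theta_k \in \mathcal{T}_1$ (using $\pi \in \mathcal{C}(\pi) \subseteq \mathcal{T}_1$) and $\cos\theta_k, \sqrt{-p/3} \in \mathcal{T}_1$, so every real cubic with trigonometric coefficients and negative discriminant has all its roots in $\mathcal{T}_1$.

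The main obstacle is bookkeeping around the \emph{casus irreducibilis}: the displayed formula silently assumes $p < 0$ and $|c| \le 1$, so the honest theorem needs that hypothesis stated explicitly (or a companion treatment of the other sign regimes via $\cosh/\sinh$ or a complex-valued $\arccos$). None of this is deep, but it is the one place where a careless statement becomes false, and a secondary cosmetic point is pinning down the branches of $\sqrt{\cdot}$ and $\arccos$ so that the three listed $x_k$ are real and genuinely exhaust the root set.
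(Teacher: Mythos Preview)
Your argument via the Vi\`ete substitution is correct and is in fact the standard derivation: the choice $u=2\sqrt{-p/3}$ forces the triple-angle identity, your simplification of $-4q/u^3$ to $\tfrac{3q}{2p}\sqrt{-3/p}$ is right, and your handling of the three branches of $\arccos$ together with the evenness of cosine correctly accounts for all roots. Your caveats about the implicit hypotheses $p<0$ and $|c|\le 1$ are also well placed.

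The only thing to flag is that the paper does not prove this theorem at all: it simply quotes the formula and attributes it to Nickalls' article on Vi\`ete and the cubic. So there is no ``paper's own proof'' to compare against; you have supplied the argument the paper defers to the literature. Your closing remark that $c\in\mathcal{T}_1$ forces $x_k\in\mathcal{T}_1$ in the casus irreducibilis is exactly the content of the corollary the paper states immediately after this theorem, so you have in effect proved that too.
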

This quite helpful result is given by \textcite{nickalls_viete_2006} paper on Descartes and the cubic equation. Furthermore doing some results on the cubic discriminant quantity $\frac{3 q}{2 p} \sqrt{\frac{-3}{p}}$. Lets us conclude that the polynomial has 3 real roots if the discriminant has a magnitude less than or equal to 1. And has 2 complex roots and 1 real root otherwise.
\begin{corollary}
	If a cubic has three real roots then it splits over $\mathcal{T}_1$
\end{corollary}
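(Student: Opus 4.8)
The plan is to take the three roots supplied by the trigonometric formula of the preceding theorem and push each of them through the closure results already proved, chiefly Theorem \ref{thm:trig}. First I would reduce to the depressed form: an arbitrary cubic $y^3 + a_2 y^2 + a_1 y + a_0$ with coefficients in $\mathcal{T}_1$ becomes $x^3 + px + q$ with $p, q \in \mathcal{T}_1$ after the substitution $y = x - a_2/3$, and since $\mathcal{T}_1$ is a field this translation changes neither the ground field nor the splitting field. A quick sign analysis disposes of the degenerate cases: $f(x) = x^3 + px + q$ has $f'(x) = 3x^2 + p$, so when $p > 0$ it is strictly increasing and has only one real root, while $p = 0$ with three real roots forces $q = 0$ (triple root $0 \in \mathbb{Q} \subseteq \mathcal{T}_1$). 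Hence I may assume $p < 0$.

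With $p < 0$ we have $-p/3 > 0$ and $-3/p > 0$, so $\sqrt{-p/3}$ and $\sqrt{-3/p}$ are positive real square roots of elements of $\mathcal{T}_1$; since $\mathcal{T}_1$ contains every constructible number (Lemma \ref{lem:t1constructible}) and, through the geometric descriptions of the previous sections, is closed under the ruler-and-compass operations — among them extraction of square roots — both lie in $\mathcal{T}_1$. Consequently $D := \tfrac{3q}{2p}\sqrt{-3/p}$ is a \emph{real} element of $\mathcal{T}_1$, and the hypothesis that the cubic has three real roots is exactly the condition $|D| \le 1$ recorded after the formula, so $\arccos D \in [0,\pi]$ is a genuine real number.

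Now I would climb the formula from the inside out. Since $\cos(\arccos D) = D \in \mathcal{T}_1$ and $\arccos D$ is real, the $\cos^{-1}$ half of Theorem \ref{thm:trig} gives $\arccos D \in \mathcal{T}_1$. Because $\pi \in \mathcal{T}_1$ — for instance as half the circumference of the unit circle, obtained from the arc-to-segment operation \textbf{T1} — each angle $\theta_k := \tfrac13 \arccos D - \tfrac{2\pi k}{3}$ for $k = 0, 1, 2$ is a real element of $\mathcal{T}_1$. Applying the forward direction of Theorem \ref{thm:trig} to each $\theta_k$ gives $\cos \theta_k \in \mathcal{T}_1$, and multiplying by $2\sqrt{-p/3} \in \mathcal{T}_1$ yields $x_k \in \mathcal{T}_1$. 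As $x_0, x_1, x_2$ exhaust the roots of $x^3 + px + q$, the cubic splits over $\mathcal{T}_1$.

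The only real content is the reality bookkeeping: every quantity handed to $\arccos$, to $\sqrt{\ \cdot\ }$, and to the outer $\cos$ must be real in order for Theorem \ref{thm:trig} (stated only for real arguments) and the positive-square-root construction to apply, and this is precisely what the sign analysis $p < 0$ together with the equivalence ``three real roots $\iff |D| \le 1$'' secures. The boundary case $|D| = 1$, where a real root is repeated, deserves a separate glance, but the formula simply returns that repeated root, so nothing breaks. I do not expect any step beyond this case-checking to present a genuine obstacle.
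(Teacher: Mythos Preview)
Your proposal is correct and follows essentially the same route as the paper: both hinge on the equivalence ``three real roots $\iff \left|\tfrac{3q}{2p}\sqrt{-3/p}\right|\le 1$'' so that $\arccos$ receives a real input in $[-1,1]$, after which the trigonometric cubic formula together with Theorem~\ref{thm:trig} places each $x_k$ in $\mathcal{T}_1$. The paper's proof is really a one-line sketch of this; your version fills in the reduction to the depressed cubic, the sign analysis forcing $p<0$, and the closure of $\mathcal{T}_1$ under square roots, all of which the paper leaves implicit.
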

\begin{proof}
	The proof for this hinges on the proof that when the input value to $\arccos$ namely $\left(\frac{3 q}{2 p} \sqrt{\frac{-3}{p}}\right)$ is between $-1$ and $1$, then the polynomial will have three real solutions. The alternative case where a cubic has 1 real and 2 complex roots, (ie $x^3-2$), will involve putting values with a magnitude greater than or equal to 1.
\end{proof}
Does this describe all the polynomials we can create with our algebra? This and other questions have a nonzero probability of potentially being answered in this paper!
\begin{lemma}
	The polynomial $x^{2n}+2cx^n+1$ has solutions in $\mathcal{T}_1$ for trigonometric and real $|c|\leq 1$
\end{lemma}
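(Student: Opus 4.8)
The claim is that $x^{2n} + 2cx^n + 1$ has all its roots in $\mathcal{T}_1$ whenever $c$ is trigonometric, real, and $|c| \le 1$. The plan is to treat this as a quadratic in $y = x^n$ followed by an $n$-th root extraction, exactly mirroring the computation already done in Lemma \ref{lem:minpoly1} but run in reverse.

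\begin{proof}[Proof sketch]
First substitute $y = x^n$, reducing the polynomial to $y^2 + 2cy + 1 = 0$, whose roots are $y = -c \pm \sqrt{c^2 - 1}$. Since $|c| \le 1$ we have $c^2 - 1 \le 0$, so $\sqrt{c^2-1} = i\sqrt{1-c^2}$ is purely imaginary, giving $y_{\pm} = -c \pm i\sqrt{1-c^2}$. Observe that $|y_{\pm}|^2 = c^2 + (1-c^2) = 1$, so each root $y_{\pm}$ lies on the unit circle $S_1$. Because $c$ is trigonometric, so is $\sqrt{1-c^2}$ (closure of $\mathcal{T}_1$ under the operations of Theorem \ref{thm:trig} together with the classical square-root construction), hence $y_{\pm} \in \mathcal{T}_1 \cap S_1$. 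Now apply the lemma on $n$-th roots of elements of $S_1$: for $\alpha \in S_1$ trigonometric, $\sqrt[n]{\alpha}$ is trigonometric. This yields one $n$-th root of each $y_\pm$ in $\mathcal{T}_1$; the remaining roots are obtained by multiplying by $\zeta_n^k = e^{2\pi i k/n}$, which are trigonometric (roots of unity). Counting, we obtain $2n$ roots of $x^{2n} + 2cx^n + 1$, all in $\mathcal{T}_1$, so the polynomial splits over $\mathcal{T}_1$.
\end{proof}

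\textbf{Where the work concentrates.} The genuinely load-bearing step is the observation $|y_\pm| = 1$, which is what lets us invoke the $S_1$-root lemma (itself a consequence of Theorem \ref{thm:exp1} with $|\alpha| = 1$) rather than needing to extract $n$-th roots of an arbitrary complex number, which $\mathcal{T}_1$ is not known to support. This is precisely why the hypothesis $|c| \le 1$ is essential: it forces the discriminant to be nonpositive and the two roots of the quadratic to be complex conjugates on the unit circle. If $|c| > 1$ the roots $y_\pm$ would be real but off the unit circle (in fact reciprocal reals), and extracting their $n$-th roots would take us outside the toolkit established so far. I would also remark in passing that this lemma subsumes the earlier example $5x^6 - 6x^3 + 5$ only after rescaling — there $c = -3/5$, confirming $|c| < 1$ — so the lemma is a clean generalization of the nonabelian-extension construction already exhibited.

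\textbf{A minor caveat to check.} One should briefly confirm that $y_+ \ne y_-$ except in the degenerate boundary case $|c| = 1$ (where $y_+ = y_- = -c = \mp 1$ and the polynomial becomes $(x^n \mp 1)^2$), so that in all cases the $2n$ roots listed are genuinely the full root multiset with correct multiplicity; this is immediate and does not affect membership in $\mathcal{T}_1$, since $\pm 1$ and all $\zeta_n^k$ are rational-over-roots-of-unity hence trigonometric.
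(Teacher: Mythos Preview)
Your proof is correct and takes essentially the same approach as the paper: both hinge on the observation that when $|c|\le 1$ the two roots of $y^{2}+2cy+1$ lie on the unit circle, so the $S_1$ $n$-th-root lemma (the special case of Theorem~\ref{thm:exp1} with $|\alpha|=1$) applies. The paper runs the computation in the reverse direction---writing down the explicit root $\sqrt[n]{-c+i\sqrt{1-c^{2}}}$ and invoking Lemma~\ref{lem:minpoly1} to recover $x^{2n}+2cx^{n}+1$ as its minimal polynomial---whereas you solve the quadratic first; your presentation has the minor bonus of exhibiting all $2n$ roots explicitly rather than just one.
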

\begin{proof}
	Consider the minimal polynomial of
	\begin{equation*}
		x=\sqrt[n]{\frac{-1+\sqrt{c^2-1}}{\sqrt{1+c^2-1}}}
	\end{equation*}
	Since it is in our special form we can use Lemma \vref{lem:minpoly1} to say that the minimal polynomial is:
	\begin{align*}
		0&=x^{2n}-\frac{2(-1)}{\sqrt{c^2}}x^n+1\\
		0&=x^{2n}-(-2c)x^n +1\\
		0&=x^{2n}+2cx^n+1
	\end{align*}
	Unfortunately, the limitation is that $\sqrt{c^2-1}$ must be real forces $c$ to be in $0\leq c \leq 1$. Luckily doing some variable substitution and flipping the sign of $a$ in our initial number gives us the minimal polynomial
	\begin{gather*}
		x=\sqrt[n]{\frac{1+\sqrt{c^2-1}}{\sqrt{1+c^2-1}}}\\
		0=x^{2n}-2cx^n+1
	\end{gather*}
	Thus letting us conclude that $|c|\leq 1$.
\end{proof}
\begin{lemma}
	Given a root $z$ of $x^{2n}+2cx^n+1$, the polynomial splits over $\{z,\bar{z},z\zeta_n^1,\bar{z}\zeta_n^1,\dots,z\zeta_n^n,\bar{z}\zeta_n^n\}$
\end{lemma}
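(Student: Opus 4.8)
The plan is to reduce the degree-$2n$ equation to a quadratic via the substitution $w=x^n$, and then to generate all of its roots from the single given root $z$ using only two operations that manifestly preserve the root set: multiplication by a power of $\zeta_n$ (legal since $(\zeta_n^k x)^n = x^n$) and complex conjugation (which, by the corollary on conjugates of roots of a real polynomial, also sends roots to roots). The conceptual point is that these two operations alone already sweep out the whole list, so nothing more than bookkeeping is needed.

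First I would set $w = x^n$, turning $x^{2n}+2cx^n+1=0$ into $w^2 + 2cw + 1 = 0$, a quadratic whose roots $w_1, w_2$ satisfy $w_1 w_2 = 1$. Since $c$ is real with $|c|\le 1$ (the hypothesis inherited from the preceding lemma), $w_{1,2} = -c \pm i\sqrt{1-c^2}$, so $|w_1|=|w_2|=1$ and $w_2 = \overline{w_1} = w_1^{-1}$. Fixing a root $z$ of the original polynomial, $z^n$ is one of $w_1,w_2$, say $z^n = w_1$; in particular $|z|=1$, so $\bar z = z^{-1}$, and $\bar z^{\,n} = \overline{z^n} = \overline{w_1} = w_2$.

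Next I would record two closure facts. Because $(\zeta_n^k x)^n = x^n$, each of $z\zeta_n^k$ satisfies $(z\zeta_n^k)^n = w_1$ and each of $\bar z\zeta_n^k$ satisfies $(\bar z\zeta_n^k)^n = w_2$; since $w_1,w_2$ are exactly the roots of $w^2+2cw+1$, every element of $\{z\zeta_n^k\}_{k} \cup \{\bar z\zeta_n^k\}_{k}$ is a root of $x^{2n}+2cx^n+1$. (Equivalently: the polynomial is self-reciprocal, so $z$ a root forces $z^{-1}=\bar z$ a root, and rotation by $\zeta_n$ does the rest.) The first family lists all $n$ solutions of $x^n = w_1$ and the second all $n$ solutions of $x^n = w_2$.

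Finally I would count. When $|c|<1$ we have $w_1 \ne w_2$, and the two families are disjoint: an equality $z\zeta_n^j = \bar z\zeta_n^k$ would give $z^2 = \zeta_n^{k-j}$ (using $|z|=1$), hence $w_1^2 = z^{2n} = 1$, contradicting $w_1 = -c\pm i\sqrt{1-c^2}\notin\{1,-1\}$. So we have produced $2n$ distinct roots of a polynomial of degree $2n$, which therefore factors into linear terms over the listed set. The borderline case $|c|=1$ must be treated on its own: there $x^{2n}+2cx^n+1 = (x^n+c)^2$, whose $n$ (doubled) roots are the $z\zeta_n^k$, all appearing in the list. The step I expect to require the most care is precisely this case split on $|c|$, together with the justification that $|z|=1$ (which rests on $|w_1|=1$, hence on $c$ being real with $|c|\le 1$); everything else is the routine observation that conjugation and multiplication by $\zeta_n$ already generate the entire root set from $z$.
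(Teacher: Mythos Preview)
Your argument is correct. The paper's proof is much terser: it verifies by direct substitution that $z\zeta_n^k$ is a root (using $(\zeta_n^k)^n=1$), then invokes real coefficients to get the conjugates $\overline{z\zeta_n^k}$, and finally observes that $\overline{\zeta_n^k}$ is again a root of unity so the whole list can be written as $\{z\zeta_n^k,\bar z\zeta_n^k\}$. That is essentially the same endpoint you reach, but via a different mechanism: you pass through the substitution $w=x^n$ and the quadratic $w^2+2cw+1$, which makes visible \emph{why} conjugation swaps the two families (namely $w_2=\overline{w_1}$ once $|c|\le 1$), whereas the paper treats conjugation as a black-box symmetry of real polynomials. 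Your route buys you two things the paper omits: an honest distinctness count (the paper simply asserts ``the set of $2n$ roots'' without checking the two families do not overlap), and a clean treatment of the degenerate case $|c|=1$ where the polynomial is a perfect square. Both are genuine improvements in rigor, and your explicit reliance on $c\in\mathbb{R}$ with $|c|\le 1$ is the right hypothesis to make visible---without it the set $\{z\zeta_n^k,\bar z\zeta_n^k\}$ can collapse to only $n$ elements and the lemma fails as stated.
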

\begin{proof}
	The step is to verify that if $z$ is a root of $x^{2n}+2cx^n+1$ then so is $z\zeta^k_n$ we can do this with substitution and the definition that $(\zeta_n^k)^n=1$ we can simplify down
	\begin{align*}
		0&=(z\zeta_n^k)^{2n}+c(z\zeta_n^k)^{n}+1\\
		0&=z^n(\zeta_n^k)^{2n}+cz^n(\zeta_n^k)^{n}+1\\
		0&=z^n(\zeta_n^k)^{2n}+cz^n(\zeta_n^k)^{n}+1\\
		0&=z^n+cz^n+1\\
		0&=0
	\end{align*}
	Thus verifying that $z\zeta^k_n$ is a root. Since the polynomial has all real coefficients the conjugate of every root $\overline{z \zeta^k_n}$ is also a root. Since the conjugate of any root of unity is also a root of unity, you can write the set of $2n$ roots as $\{z \zeta^k_n,\bar{z}\zeta^k_n\}$
\end{proof}

\begin{theorem}
	If $p(x)$ is an irreducible polynomial over the rationals with real and complex roots then it has no roots in $\mathcal{P}$
	\label{thm:thebigone}
\end{theorem}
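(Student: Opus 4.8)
The plan is to bootstrap the preceding theorem---that $\mathcal{P}\cap\mathbb{R}$ is splitting over $\mathbb{Q}$---up to all of $\mathcal{P}$, and then collapse the statement into a short contradiction. First I would record two structural facts about $\mathcal{P}$. Since extracting real and imaginary parts is an ordinary ruler-and-compass operation (as in the opening lemma on $\Re(a),\Im(a)$) and $i$ is constructible, every $w\in\mathcal{P}$ can be written $w=\Re(w)+i\,\Im(w)$ with $\Re(w),\Im(w)\in\mathcal{P}\cap\mathbb{R}$; conversely $\mathcal{P}\cap\mathbb{R}\subseteq\mathcal{P}$ and $i\in\mathcal{P}$. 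Hence $\mathcal{P}=L(i)$ where $L:=\mathcal{P}\cap\mathbb{R}$, and because $L\subseteq\mathbb{R}$ and $i\notin\mathbb{R}$ we also get the clean identity $L(i)\cap\mathbb{R}=L$.

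Next I would upgrade the splitting (normality) property from $L$ to $\mathcal{P}$. Since $L$ is splitting over $\mathbb{Q}$, it is the splitting field over $\mathbb{Q}$ of the family $S$ consisting of the minimal polynomials of all of its elements; adjoining $i$ simply adjoins the roots of $x^{2}+1$, so $\mathcal{P}=L(i)$ is the splitting field over $\mathbb{Q}$ of $S\cup\{x^{2}+1\}$. Invoking Lemma~\ref{lem:splittower} (or just the definition of a splitting extension), $\mathcal{P}$ is therefore itself splitting over $\mathbb{Q}$: for every irreducible $q\in\mathbb{Q}[x]$, either all roots of $q$ lie in $\mathcal{P}$ or none do.

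The theorem then follows at once. Suppose for contradiction that the irreducible $p$ had some root in $\mathcal{P}$. By the previous paragraph $p$ splits completely over $\mathcal{P}$; in particular the real root $r$ of $p$ lies in $\mathcal{P}$, and being real it lies in $\mathcal{P}\cap\mathbb{R}=L$. But $L$ is splitting over $\mathbb{Q}$ and $p$ is irreducible with a root $r\in L$, so \emph{every} root of $p$ lies in $L\subseteq\mathbb{R}$, contradicting the hypothesis that $p$ also has a non-real (complex) root. Hence $p$ has no root in $\mathcal{P}$.

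The only genuine content is in the first two steps; the final reduction is trivial. I expect the main obstacle to be expository rather than mathematical: one must be careful that ``splitting over $\mathbb{Q}$'' in the all-or-nothing sense used here really does pass from $L$ to $L(i)$, and that $\mathcal{P}$ truly equals $L(i)$ and is not something larger---the latter resting on the observation that the arbitrary-angle-partition constructions, like the ruler-and-compass ones, only ever produce points whose coordinates already lie in the field. One could instead avoid the identification $\mathcal{P}=L(i)$ by treating a real root of $p$ in $\mathcal{P}$ and a non-real root of $p$ in $\mathcal{P}$ as separate cases (the first being immediate from the previous theorem, the second using closure of $\mathcal{P}$ under complex conjugation), but that route splinters into awkward sub-cases, so the normality argument above is the one I would present.
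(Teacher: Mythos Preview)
Your argument is correct and follows essentially the same route as the paper: both rest on the preceding theorem that $\mathcal{P}\cap\mathbb{R}$ is splitting over $\mathbb{Q}$, and both finish by observing that a root of $p$ in $\mathcal{P}$ forces the real root into $\mathcal{P}\cap\mathbb{R}$ and hence all roots into $\mathbb{R}$. The only difference is packaging---the paper argues by the two-case split (real root versus non-real root, invoking ``$\mathcal{P}$ is splitting'' in the latter case without justification), which is precisely the alternative you describe and reject in your final paragraph; your version, proving $\mathcal{P}=L(i)$ is splitting up front, actually fills the gap the paper leaves.
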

\begin{proof}
	Consider what would happen if a real root $k_R$ of $p(x)$ was included, since $\mathcal{P}(x) \cap \mathcal{R}$ is splitting then every root of $p(x)$ must be in $\mathcal{P}(x) \cap \mathcal{R}$, this is impossible since some roots of $p(x)$ are complex. Likewise any complex root $k_I$ cannot be included since $\mathcal{P}$ is splitting, but $k_R$ cannot be included so neither can $k_I$.
\end{proof}
The first form I have seen of a proof for this in the literature is \cite{gleason_angle_1988}, where he proves a weaker version for cubics and angle trisection only using Cardano's closed-form solution. (There is an error in this earlier formulation where he states: "A real cubic equation can be solved geometrically using ruler, compass, and angle-trisector if and only if its roots are all real", this is true of irreducible cubics, but simple examples like $x^3-1$ disprove the general case.) 
\begin{corollary}
	Every irreducible polynomial with an abelian galois extension has all real or all complex roots.
\end{corollary}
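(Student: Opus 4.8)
The plan is to combine Theorem \ref{thm:thebigone} with the Kronecker--Weber theorem and the observation that $\mathcal{P}$ already contains every root of unity. First I would record why $\zeta_n = e^{2\pi i/n} \in \mathcal{P}$: the angle-partition constructions let us cut a full turn into $n$ equal angles, which is precisely the construction of a regular $n$-gon, so each primitive $n$th root of unity is constructible in $\mathcal{P}$, and hence the whole cyclotomic field $\mathbb{Q}(\zeta_n)$ is contained in $\mathcal{P}$. This upgrades the Kronecker--Weber corollary so that it speaks about $\mathcal{P}$ and not merely about $\mathcal{T}_1$.

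Next, let $p(x)$ be irreducible over $\mathbb{Q}$ with abelian Galois group, and let $K$ be its splitting field over $\mathbb{Q}$. Then $K/\mathbb{Q}$ is a finite abelian extension, so by Kronecker--Weber (Theorem \ref{thm:kron}) there is an $n$ with $K \subseteq \mathbb{Q}(\zeta_n) \subseteq \mathcal{P}$. In particular every root of $p$ lies in $\mathcal{P}$.

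Finally I would invoke the contrapositive of Theorem \ref{thm:thebigone}: if $p$ had both a real root and a non-real root, then $p$ would have no root in $\mathcal{P}$, contradicting the previous paragraph. Hence the roots of $p$ are either all real or all non-real, which is the assertion.

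There is no serious obstacle here; the statement is a short deduction, and the only points requiring care are (i) spelling out that partitioning the full angle into $n$ pieces genuinely places $\zeta_n$ in $\mathcal{P}$, so that the Kronecker--Weber corollary can be applied to $\mathcal{P}$ directly, and (ii) stressing that the irreducibility hypothesis is essential — the reducible polynomial $x^3-1$ has an abelian splitting field yet a mix of real and non-real roots, which is exactly the subtlety flagged after Theorem \ref{thm:thebigone}.
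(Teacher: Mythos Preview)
Your proof is correct and follows essentially the same route as the paper: apply Kronecker--Weber to place every root of $p$ inside $\mathcal{P}$, then invoke the contrapositive of Theorem~\ref{thm:thebigone}. The paper compresses this into two sentences, whereas you spell out explicitly why $\zeta_n\in\mathcal{P}$ (so that Kronecker--Weber lands in $\mathcal{P}$ rather than merely in $\mathcal{T}_1$) and flag the role of irreducibility; these are useful clarifications but not a different argument.
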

\begin{proof}
	Via Theorem \vref{thm:kron} our polynomial must have all its roots constructible using angle partitions. Thus our polynomial must have all real or all complex roots via Theorem \vref{thm:thebigone}.
\end{proof}
\begin{corollary}
	Any odd $k$th root of a prime number $\sqrt[k]{p}$, is not constructible using angle partitions, for any odd $k$ greater than 3.
\end{corollary}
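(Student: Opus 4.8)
The plan is to deduce the statement from Theorem~\ref{thm:thebigone} by exhibiting one irreducible rational polynomial that has $\sqrt[k]{p}$ as a root and has both real and complex roots. The obvious candidate is $x^k - p$. The first step is irreducibility: since $p$ is prime, $x^k - p$ is Eisenstein at $p$ (it is monic, all intermediate coefficients are zero, and the constant term $-p$ is divisible by $p$ but not by $p^2$), so it is irreducible over $\mathbb{Q}$ and is therefore the minimal polynomial of $\sqrt[k]{p}$.

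Next I would pin down the root structure. The roots of $x^k - p$ are the numbers $\zeta_k^{\,j}\sqrt[k]{p}$ for $j = 0, 1, \dots, k-1$, where $\zeta_k = e^{2\pi i/k}$. A power $\zeta_k^{\,j}$ is real precisely when it equals $1$ or $-1$; since $k$ is odd, $-1$ is not a $k$th root of unity, so $\zeta_k^{\,j}$ is real only for $j \equiv 0 \pmod k$. Hence $x^k - p$ has exactly one real root, namely $\sqrt[k]{p}$, and exactly $k-1$ non-real roots; as $k > 3$ there is certainly at least one non-real root, so $x^k - p$ has both real roots and complex roots. (Equivalently: for odd $k$ the map $t \mapsto t^k$ is a bijection of $\mathbb{R}$, so $t^k = p > 0$ has a unique real solution, while the remaining roots are non-real.)

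Finally, applying Theorem~\ref{thm:thebigone} with $p(x) = x^k - p$ yields that $x^k - p$ has no root in $\mathcal{P}$; in particular $\sqrt[k]{p} \notin \mathcal{P}$, i.e.\ it is not constructible using angle partitions. I do not expect a genuine obstacle here: the only steps requiring any care are the two inputs just used, namely the irreducibility of $x^k - p$ (Eisenstein) and the observation that for odd $k$ the nontrivial $k$th roots of unity are genuinely non-real. It is worth remarking that the same reasoning also covers $k = 3$, so this corollary in fact contains the classical impossibility of constructing $\sqrt[3]{p}$ with an angle trisector — and hence of doubling the cube — as a special case.
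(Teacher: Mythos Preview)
Your argument is correct and follows essentially the same route as the paper: Eisenstein at $p$ for irreducibility of $x^k-p$, then the observation that for $k\ge 3$ the polynomial has both real and non-real roots, then an appeal to Theorem~\ref{thm:thebigone}. Your treatment is in fact more careful than the paper's, since you actually justify why odd $k$ forces exactly one real root, and your closing remark that $k=3$ is covered as well matches the paper's parenthetical about doubling the cube.
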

\begin{proof}
	Since the minimal polynomial $x^n-p$ is irreducible via Eisenstein's criterion since $p$ divides every coefficient except for the first and $p^2$ does not divide $p$. For $n \geq 3$ the polynomial has both real and complex roots then $\sqrt[n]{p}$ is not constructible using partitions. (Also serves as an argument that doubling the cube is impossible with conventional euclidean geometry.)
\end{proof}

This leads us to the following result that will be further expanded upon in the conclusion.
\begin{conjecture}
	Since doubling the cube is impossible with angle partitions it seems unlikely that it is possible in $\mathcal{T}_1$.
\end{conjecture}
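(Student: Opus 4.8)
The plan is to lift Theorem \ref{thm:thebigone} from the angle-partition field $\mathcal{P}$ to $\mathcal{T}_1$ at the level of algebraic numbers. Since an $n$-fold partition of an angle can be performed by converting the angle to a segment with \textbf{T1}, dividing that segment by $n$ with ruler and compass, and converting back with \textbf{T2}, we have $\mathcal{P}\subseteq\mathcal{T}_1$; and the previous section gives $\sqrt[3]{2}\notin\mathcal{P}$ because $x^{3}-2$ is irreducible over $\mathbb{Q}$ with one real and two non-real roots. As $\sqrt[3]{2}$ is real and algebraic, it therefore suffices to prove that the real algebraic numbers of $\mathcal{T}_1$, namely $A_{\mathbb{R}}:=\mathcal{T}_1\cap\overline{\mathbb{Q}}\cap\mathbb{R}$, form a splitting extension of $\mathbb{Q}$ in the sense of the paper's definition: a field that is splitting over $\mathbb{Q}$ and contains $\sqrt[3]{2}$ would also contain the non-real conjugate $\omega\sqrt[3]{2}$, which is impossible. (This would in passing reprove, and strengthen to $\mathcal{T}_1$, the content of Theorem \ref{thm:thebigone}.)

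I would first present $\mathcal{T}_1$ as the union of a directed system of finitely generated subfields: every element of $\mathcal{T}_1$ lies in some finite tower $\mathbb{Q}=K_0\subset K_1\subset\cdots\subset K_N$ in which each step $K_i\subset K_{i+1}$ is one of (i) a ruler-and-compass (quadratic) extension, legitimate since $\mathcal{T}_1$ is closed under the classical constructions on its elements (Lemma \ref{lem:t1constructible} and the geometric description of $\mathcal{T}_1$); (ii) adjoining $e^{i\alpha}$ for some $\alpha\in K_i\cap\mathbb{R}$, which is \textbf{T2} (Theorem \ref{thm:exp}); or (iii) adjoining $-i\log\beta$ for some $\beta\in K_i$ with $|\beta|=1$, which is \textbf{T1}. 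It then suffices to show that if $\sqrt[3]{2}\in K_N$ for such a tower a contradiction follows, and the natural way to extract it is to track transcendence degree over $\mathbb{Q}$ up the tower.

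The tool I would reach for is Schanuel's conjecture: if $z_1,\dots,z_m$ are $\mathbb{Q}$-linearly independent complex numbers then $\operatorname{trdeg}_{\mathbb{Q}}\mathbb{Q}(z_1,\dots,z_m,e^{z_1},\dots,e^{z_m})\ge m$. Granting it, an exponential step (ii) whose $\alpha$ is transcendental over the field built so far strictly raises transcendence degree, so the only way a fresh \emph{algebraic} element can appear higher up is through a later logarithm step (iii) that spends back a unit of transcendence gained earlier; Schanuel is precisely what pins down the algebraic relations among the adjoined $e^{i\alpha}$, and it should force any such cancellation to reduce to an operation already available in $\mathcal{P}$, namely taking an $n$th root of a unit-modulus algebraic number. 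Propagating the algebraic part $K_i\cap\overline{\mathbb{Q}}$ up the tower, one should then see it grow only by real quadratic extensions, a single imaginary quadratic extension, and splitting extensions attached to polynomials of the shape $x^{2n}+2cx^n+1$ analysed in the previous section --- exactly the ingredients from which $\mathcal{P}\cap\mathbb{R}$ was shown to be splitting --- so that $A_{\mathbb{R}}$ is a union of a splitting tower and Lemma \ref{lem:splittower} finishes the argument.

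The hard part is this last bookkeeping, and it is hard for three reasons. First, Schanuel's conjecture is open, so at best one obtains a conditional theorem; unconditionally one can only dispatch towers of bounded exponential depth, where Lindemann--Weierstrass, Baker's theorem on linear forms in logarithms, and the Gelfond--Schneider theorem substitute for Schanuel (the depth-zero case being just $\mathcal{C}$, and the ``partitions only'' case being $\mathcal{P}$, already handled). Second, even granting Schanuel one must rule out configurations in which several transcendental exponentials $e^{i\alpha_1},\dots,e^{i\alpha_k}$ are algebraically dependent so as to manufacture a new algebraic number with no matching logarithm step; excluding this requires Schanuel in its full ``all relations are consequences of the Schanuel inequality'' strength, or equivalently the model theory of the complex exponential field in the spirit of Zilber's pseudo-exponentiation. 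Third, the quadratic steps are taken over bases that are in general transcendental, and a quadratic extension of a transcendental field can contain algebraic numbers not in the base, so one must be careful about which ``algebraic part'' is being carried up in order to preserve the splitting-tower structure. I expect that settling the first point in the bounded-depth case already gives an unconditional proof that $\sqrt[3]{2}\notin\mathcal{T}_1$ whenever only finitely many nested exponentiations are used, and that the full conjecture is equivalent to a clean special case of Schanuel's conjecture.
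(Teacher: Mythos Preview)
The statement you are addressing is labeled a \emph{Conjecture} in the paper, and the paper gives no proof of it.  The author explicitly leaves the question open and returns to it in the conclusion only to reformulate it as whether $\mathcal{P}$ already captures all algebraic elements of $\mathcal{T}_1$, noting that a proof would have to rule out ``other ways of making algebraic numbers'' from transcendental intermediates (the example offered is $e^{\ln(a)\ln(b)/\ln(c)}$, i.e.\ numbers like $2^{\log_3 5}$) and speculating that some theorem in transcendental number theory might settle it.  There is therefore no paper proof to compare against.

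Your proposal is accordingly not a reconstruction of an existing argument but an attempt to advance the conjecture, and you have located the difficulty accurately.  The reduction to showing that $\mathcal{T}_1\cap\overline{\mathbb{Q}}\cap\mathbb{R}$ is normal over $\mathbb{Q}$ is sound and is essentially the author's own reformulation.  Your appeal to Schanuel's conjecture as the controlling tool goes further than anything in the paper and is the natural choice; you are also right that even granting Schanuel the bookkeeping is delicate, and your three caveats---Schanuel is open, one needs its full ``all algebraic relations are explained'' strength (or the Zilber pseudo-exponentiation framework), and quadratic steps taken over transcendental bases can introduce algebraic elements not in the base---are precisely the obstructions that keep the sketch from being a proof.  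In short, what you have written is a reasonable research outline, correctly flagged as conditional and incomplete, for a statement the paper itself does not claim to prove.
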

\section*{General Intersections of Transcendental Curves}
Limiting ourselves to certain intersections let us use the similar geometric interactions of all curves to produce a field with an interesting algebraic structure. Letting in other kinds of intersections introduces a great amount of complexity to the problem.  Consider first the field created by allowing any sloped line intersections with simple trigonometric functions like $\sin$ and $\cos$

\begin{definition}
	The field generated by allowing sloped line intersections with $\sin$ or $\cos$ is the field $\mathcal{T}_1$ adjoined with the roots of the expression $\sin{x}+ax+b$
\end{definition}
\begin{theorem}\label{thm:t2tran1}
	If both $a$ and $b$ are algebraic then any solution $x$, q$\sin^{-1}(x)$ are transcendental.
\end{theorem}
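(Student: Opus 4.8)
My plan is to route everything through the Lindemann--Weierstrass theorem, used via the corollary that for a \emph{nonzero} algebraic number $\alpha$ the value $e^{\alpha}$ is transcendental, and hence so is $\sin\alpha$: writing $u=e^{i\alpha}$, algebraicity of $\sin\alpha$ would make $u$ a root of $u^{2}-2i\sin\alpha\,u-1=0$, forcing $e^{i\alpha}$ algebraic, which is false. I read the statement as asserting that every solution $x$ of $\sin x+ax+b=0$, together with $\sin^{-1}(x)$, is transcendental when $a,b$ are algebraic; before anything else I would record the one harmless caveat, namely that $x=0$ must be excluded (it is a solution only in the degenerate case $b=0$, where it is trivially algebraic), so the clean statement concerns the genuine intersection points of the line with the sine curve.

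The transcendence of $x$ itself is then one line. Suppose for contradiction that a solution $x\neq 0$ were algebraic. Since $a$ and $b$ are algebraic, $-ax-b$ is algebraic, hence $\sin x=-ax-b$ would be algebraic; but $x$ is a nonzero algebraic number, so Lindemann--Weierstrass forces $\sin x$ to be transcendental --- a contradiction. Therefore $x$ is transcendental, and consequently $\sin x=-ax-b$ is transcendental as well (an algebraic combination of the transcendental $x$ with $a\neq 0$), so the whole intersection point $(x,\sin x)$ lies outside $\overline{\mathbb Q}$; the same observation disposes of $\csc x = 1/\sin x$ and of any fixed nonconstant rational function of $x$.

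The delicate clause is $\sin^{-1}(x)$, which is a legitimate element of $\mathcal{T}_1$ once $x$ is, by the closure of $\mathcal{T}_1$ under the inverse trigonometric operations (Theorem~\ref{thm:trig}). Here the naive contradiction argument stalls: if $\theta:=\sin^{-1}(x)$ were algebraic it would be nonzero, but $\sin\theta=x$ with $\theta$ algebraic is entirely consistent with $x$ transcendental, and the only new information, $\sin(\sin\theta)=-a\sin\theta-b$, merely restates the original equation. Unwinding it shows that an algebraic $\theta$ would force $e^{ix}=\exp\!\big((w^{2}-1)/(2w)\big)$, with $w=e^{i\theta}$, to be algebraic over $\mathbb Q(w)$ --- a transcendence statement about a nested exponential that lies beyond Lindemann--Weierstrass (it does follow from Schanuel's conjecture, since $i\theta$ and $(w^{2}-1)/(2w)$ are $\mathbb Q$-linearly independent). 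I expect this to be the main obstacle, and I would resolve it by either restricting the theorem to the transcendence of $x$ and $\sin x$, or stating the $\sin^{-1}$ clause conditionally on Schanuel's conjecture.
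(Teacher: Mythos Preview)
Your argument for the transcendence of $x$ is correct and is exactly the paper's: assume $x$ algebraic, deduce $\sin x=-ax-b$ algebraic, and invoke Lindemann--Weierstrass to get a contradiction; the paper then notes, as you do, that $\sin x$ is transcendental too. Your added caveat about the degenerate solution $x=0$ when $b=0$ is a genuine improvement over the paper, which omits it.

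On the $\sin^{-1}(x)$ clause you are more honest than the paper. The paper's proof of this theorem in fact establishes only the transcendence of $x$ and $\sin x$; the $\sin^{-1}(x)$ assertion is pushed to a separate corollary whose proof actually shows something different and weaker, namely that $x$ cannot be a rational multiple of $\pi$ (because $\sin(\pi p/q)$ is algebraic). That is not the same as $\sin^{-1}(x)$ being transcendental, and your diagnosis is right: transcendence of $\arcsin x$ for such $x$ is a nested-exponential statement that Lindemann--Weierstrass alone does not reach. So your proposal matches the paper on what is actually proved, and correctly flags the part that neither you nor the paper can justify with the tools at hand.
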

\begin{proof}
	First, assume that $x$ is algebraic, then since $a$ and $b$ are algebraic then $ax+b$ is algebraic, and therefore $\sin{x}$ is algebraic, but if $x$ is algebraic then $\sin{x}$ is algebraic, but via [Lindemann–Weierstrass] $\sin{x}$ and $x$ cant both be algebraic so by contradiction $x$ is transcendental. Since $ax+b$ is transcendental then $\sin{x}$ is also transcendental.
\end{proof}
\begin{corollary}
	Under the constraints of Theorem \vref{thm:t2tran1} then for any solution $x$ then $\sin^{-1}(x)$ must be transcendental
\end{corollary}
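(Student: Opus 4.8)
The plan is to argue by contradiction using Lindemann--Weierstrass, mirroring the proof of Theorem \ref{thm:t2tran1} but applying the transcendence input to the \emph{angle} rather than to the value. Suppose $\theta := \sin^{-1}(x)$ is algebraic. By Theorem \ref{thm:t2tran1} the solution $x$ is transcendental, so $x \neq 0$, and since $\sin\theta = x$ this forces $\theta \neq 0$. Now $\theta$ is a nonzero algebraic number, so by the same corollary of Lindemann--Weierstrass already used to prove Theorem \ref{thm:trig} (that $\sin$, $\cos$, $\tan$ of a nonzero algebraic number are transcendental), the value $\sin\theta = x$ is transcendental. By itself this only reconfirms Theorem \ref{thm:t2tran1}, so the real contradiction has to come from feeding this back into the defining equation: $x$ satisfies $\sin(x) + ax + b = 0$, hence substituting $x = \sin\theta$ gives $\sin(\sin\theta) + a\sin\theta + b = 0$, and the goal becomes showing that no nonzero algebraic $\theta$ can satisfy this.

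The easy half of that is to note that with $a$, $b$ algebraic and $\theta$ algebraic, the term $a\sin\theta + b$ is a transcendental number of very controlled shape, an algebraic combination of $e^{i\theta}$ and $e^{-i\theta}$. The difficult term is $\sin(\sin\theta)$, and this is exactly where I expect the main obstacle to sit: $\sin\theta$ is already transcendental, so Lindemann--Weierstrass says nothing about $\sin(\sin\theta)$, and controlling a ``sine of a transcendental'' in full generality would require transcendence statements about nested exponentials beyond what Lindemann--Weierstrass supplies (Schanuel-type territory). So the airtight version of this corollary likely needs either the mild extra hypothesis that the root $x$ is not itself the sine of an algebraic number, or a use of more of the structure of $\sin(x) = -ax - b$ than the transcendence of a single value.

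In the style of the rest of this section I would present the shorter route: once Theorem \ref{thm:t2tran1} has forced $x$ to be transcendental, the identity $\sin^{-1}(x) = -i\ln\!\left(ix + \sqrt{1-x^2}\right)$ from the proof of Theorem \ref{thm:trig} writes $\sin^{-1}(x)$ as a logarithm of the unit-modulus number $w = ix + \sqrt{1-x^2}$; if $\sin^{-1}(x)$ were algebraic and nonzero then $w = e^{i\sin^{-1}(x)}$ would be transcendental by Lindemann--Weierstrass, and one would combine this forced transcendence of $w$ with the equation satisfied by $x$ to finish. The step that genuinely needs care --- and the one most likely to require the caveat above --- is arguing that transcendence of $w$ is actually incompatible with $x$ being a root of $\sin z + az + b$, as opposed to merely being consistent with $x$ being transcendental.
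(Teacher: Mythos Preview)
You read the corollary literally --- that $\arcsin(x)$, the inverse sine of the transcendental solution $x$, is itself a transcendental number --- and you correctly diagnosed that this lands in Schanuel territory: controlling $\sin(\sin\theta)$ for algebraic $\theta$ is genuinely out of reach of Lindemann--Weierstrass alone. Your honest flagging of that obstruction is the right reaction to the statement as written, and nothing in your outline is mathematically wrong; the ``step that genuinely needs care'' you isolate is exactly the one that cannot be closed with the tools on the table.

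The paper, however, does something entirely different and much lighter. Its one-line argument is: if $x = \pi\cdot\tfrac{p}{q}$ for some rational $p/q$, then $e^{ix}$ is a root of unity, so $\sin(x)$ is algebraic, contradicting the conclusion of Theorem~\ref{thm:t2tran1} that $\sin(x) = -(ax+b)$ is transcendental. Hence $x$ is not a rational multiple of $\pi$. That is all the paper proves here. So the corollary's stated claim and its proof do not match: either the intended statement was ``$x/\pi$ is irrational'' (equivalently, $x$ is not of the form $\pi\cdot\tfrac{p}{q}$), or the symbol $\sin^{-1}(x)$ is being used in some nonstandard way. Your elaborate route is aimed at the harder target the text announces; the paper's route hits only the softer target its argument can actually reach. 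If you align your goal with what the paper's proof supports, the whole thing collapses to the two-line observation above and none of the nested-exponential machinery is needed.
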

\begin{proof}
	Because $\sin{\pi\cdot \frac{p}{q}}$ is algebraic, it contradicts the aforementioned theorem. Thus $\pi\cdot \frac{p}{q}$ is not a valid solution.
\end{proof}

\begin{corollary}
	For any numbers $a \in \mathbb{R}$ and $b \in \mathcal{T}_1$ the equation $\sin(x)=ax+(\sin(b)-ab)$ has a trigonometric solution
\end{corollary}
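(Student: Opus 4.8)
The plan is to exhibit an explicit trigonometric solution rather than to argue existence abstractly, since the equation has been set up precisely so that one solution is already visible. The right-hand side $ax + (\sin(b) - ab)$ is an affine function of $x$ whose constant term $\sin(b) - ab$ has been chosen exactly so that the line it describes passes through the point $(b, \sin(b))$, which lies on the graph of $\sin$. So the first and essentially only step is to substitute $x = b$ and compute
\begin{equation*}
	a\cdot b + (\sin(b) - ab) = \sin(b),
\end{equation*}
which is exactly the value of the left-hand side $\sin(x)$ at $x = b$. Hence $x = b$ solves $\sin(x) = ax + (\sin(b)-ab)$.

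The second step is to observe that this solution is trigonometric. By hypothesis $b \in \mathcal{T}_1$, so $x = b$ is itself a trigonometric solution, and that is the whole of the claim. Geometrically, the equation describes an intersection of the sine curve with the line of slope $a$ through $(b, \sin(b))$, and that intersection point manifestly has a trigonometric $x$-coordinate; no appeal to Theorem \vref{thm:trig} or to Lindemann–Weierstrass is needed for this direction (those results are what force the \emph{generic} such intersection to be transcendental).

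There is no genuine obstacle here; the only subtlety worth flagging is interpretive. The content of the corollary is the contrast with Theorem \vref{thm:t2tran1} and its corollaries: although a sloped line with algebraic intercept meeting $\sin$ produces a transcendental number, choosing the intercept \emph{adaptively} as $\sin(b) - ab$ for a prescribed $b \in \mathcal{T}_1$ always recovers a point already inside $\mathcal{T}_1$. If one wishes to say slightly more, one can remark that for each fixed real slope $a$ the assignment $b \mapsto b$ (this intersection) is the identity on $\mathcal{T}_1$, so that no trigonometric number is ever lost when these sloped intersections are adjoined; but for the statement as written the single substitution $x = b$ suffices, and I would present the argument in essentially the two lines above.
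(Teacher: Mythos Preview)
Your proof is correct and matches the paper's own argument exactly: the paper's entire proof is the single line ``A solution of $x=b$ satisfies the equation.'' Your additional commentary on the geometric meaning and the contrast with Theorem~\ref{thm:t2tran1} is accurate context, but the substance of the argument is the same substitution.
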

\begin{proof}
	A solution of $x=b$ satisfies the equation.
\end{proof}
\begin{corollary}
	The problem statement is equivalent to finding all roots of $bx-\frac{x^3}{6}+\frac{x^5}{25}+\dots$
\end{corollary}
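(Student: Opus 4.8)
The plan is to replace $\sin$ by its everywhere-convergent power series and observe that, after merely renaming a coefficient, the defining equation becomes the displayed one. Since $\sin$ is entire, $\sin(x) = \sum_{k \ge 0} \frac{(-1)^{k}}{(2k+1)!}x^{2k+1}$ for all $x \in \mathbb{C}$; the oddness of $\sin$ means only odd powers occur, so the field-generating equation $\sin(x) + ax + b = 0$ is term-for-term the same equation as $\bigl((1+a)x - \tfrac{x^{3}}{6} + \tfrac{x^{5}}{120} - \cdots\bigr) + b = 0$. In the natural case $b = 0$ (where $x = 0$ is automatically a root and the genuine content is the remaining roots), writing ``$b$'' for the linear coefficient $1+a$ produces exactly $bx - \tfrac{x^{3}}{6} + \tfrac{x^{5}}{5!} - \cdots = 0$, so that the two root-finding problems coincide; here I would flag that the printed denominator $25$ ought to read $5! = 120$, and, more usefully, that the general coefficient is $\tfrac{(-1)^{k}}{(2k+1)!}$. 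If a nonzero constant term is instead to be carried along, I would first invoke the trigonometric solution $x_{0}$ furnished by the preceding corollary, translate by $x \mapsto x + x_{0}$ to force $0$ to be a root — which removes the constant term — and only then relabel; one must note that this translation keeps the series free of even powers only when $\sin(x_{0}) = 0$ (e.g.\ $x_0 \in \pi\mathbb{Z}$), since $\sin(x+x_{0}) = \sin(x_{0})\cos x + \cos(x_{0})\sin x$ would otherwise reintroduce even powers.

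Having done that I would make the word ``equivalent'' precise. Each operation connecting $f(x) := \sin(x) + ax + b$ to $g(x) := bx - \tfrac{x^{3}}{6} + \tfrac{x^{5}}{5!} - \cdots$ — termwise substitution of a series that converges uniformly on compact sets, an affine change $x \mapsto x + x_{0}$ with $x_{0} \in \mathcal{T}_{1}$, a relabelling of coefficients, and at most one division by the factor $x$ — is reversible and carries the root set of $f$ to the root set of $g$ by a bijection that is $\mathcal{T}_1$-definable. Consequently $\alpha$ is a root of $f$ iff its image is a root of $g$, and since $x_{0} \in \mathcal{T}_1$ and $\mathcal{T}_{1}$ is a field, $\alpha \in \mathcal{T}_{1}$ iff that image lies in $\mathcal{T}_{1}$; hence adjoining to $\mathcal{T}_1$ the roots of $f$ and adjoining the roots of $g$ yield the same field, which is exactly the asserted equivalence.

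The obstacle I expect to dominate is not any calculation but this last bookkeeping: I must verify that the division by $x$ discards only the already-identified trigonometric root and creates no spurious ones, that the translation by $x_0$ really stays inside $\mathcal{T}_1$ and is a bijection on roots, and — the subtlest point — that the whole rearrangement is carried out over $\mathbb{C}$, not merely over $\mathbb{R}$, so that no complex roots are silently dropped when passing between $f$ and $g$. With those points secured the corollary is immediate, since after the reduction ``solve $\sin(x) + ax + b = 0$'' and ``solve $bx - \tfrac{x^{3}}{6} + \tfrac{x^{5}}{5!} - \cdots = 0$'' are literally the same question about the same entire function.
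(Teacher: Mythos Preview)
Your approach---substitute the Taylor series for $\sin$ and relabel the linear coefficient---is exactly what the paper does; its entire proof is the single displayed expression $-(b_1 x + a_1) + \bigl(x - \tfrac{x^3}{3!} + \tfrac{x^5}{5!} - \tfrac{x^7}{7!}\bigr)$ with no further commentary. Your additional care about the constant term, the typo $25$ versus $5!$, and the precise meaning of ``equivalent'' goes well beyond the paper's own argument, which leaves all of those points unaddressed.
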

\begin{proof}
	Using the Taylor series expansion for $\sin(x)$ we have that we are trying to find the roots of
	\begin{equation*}
		-\left(b_1x+a_1\right)+\left(x-\frac{x^3}{3!}+\frac{x^5}{5!}-\frac{x^7}{7!}\right)
	\end{equation*}
\end{proof}
\begin{theorem}
	Any sloped line intersecting the quadratrix involves finding the solution to $a+\frac{b}{x}=\cot(x)$
\end{theorem}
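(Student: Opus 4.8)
\emph{Proposal.} The plan is to substitute an explicit parameterization of the quadratrix into the equation of a general sloped line and simply rearrange; the statement is really an identification of the governing equation, so the work is a one-line computation once the parameterization is in hand. Using the coordinate conventions of the earlier sections ($A$ at the origin, $B$ on the unit circle, the line $\overline{AB}$ serving as the axis from which the defining distance is measured), recall the description of the quadratrix already used in the proof that the field of Definition~\ref{def:quadt1field} equals $\mathcal{T}_1$: the point of the quadratrix lying on the ray from the origin that makes angle $\alpha$ with $\overline{AB}$ is the complex number $|\alpha| + i\,|\alpha|\cot\alpha$. Geometrically this is immediate from the definition — a point at parameter $s$ along that ray has "distance from $\overline{AB}$'' equal to $s\sin\alpha$, and the defining condition that this distance equal the arc length $\alpha$ forces $s = \alpha/\sin\alpha$, giving coordinates $(\alpha,\ \alpha\cot\alpha)$. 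So, up to the absolute-value bookkeeping on the sign of $\alpha$, the quadratrix is the parametric curve $\alpha \mapsto (\alpha,\ \alpha\cot\alpha)$, i.e.\ the graph $y = x\cot x$ on the relevant range of $x$.

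Next I would write an arbitrary non-vertical line as $y = ax + b$ (taking the slope to be $a$ and the intercept $b$, matching the letters in the statement). A point $(\alpha,\ \alpha\cot\alpha)$ of the quadratrix lies on this line exactly when $\alpha\cot\alpha = a\alpha + b$. The quadratrix contains no point with $\alpha = 0$ — that value is only a limit point of the curve, yielding the non-quadratrix point $(0,1)$ — so every genuine intersection has $\alpha \neq 0$, and dividing through by $\alpha$ gives $\cot\alpha = a + b/\alpha$. Renaming the parameter $\alpha$ as $x$ yields precisely $a + \frac{b}{x} = \cot x$, and conversely each solution $x$ of this equation in the parameter range of the curve produces the intersection point $(x,\ x\cot x) = (x,\ ax+b)$. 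The case of a vertical line $x = c$ is excluded by the word "sloped'' (and anyway reduces to a direct evaluation of $\cot$, introducing nothing new).

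The only points needing care are bookkeeping rather than mathematical, and they are the closest thing here to an obstacle: (i) pinning down the quadratrix parameterization with the paper's normalization, and observing that any stray multiplicative constant coming from a different choice of circle radius or arc-length scale would merely rescale the (arbitrary) parameters $a$ and $b$ and so leaves the \emph{form} of the equation untouched; (ii) handling the absolute values for rays with $\alpha < 0$, which by relabeling or by substituting $u = |\alpha|$ reduces to the $\alpha > 0$ case; and (iii) recording the $\alpha = 0$ exclusion so that the division by $\alpha$ is legitimate. Once these are dispatched, matching the slope of the line with $a$ and its intercept with $b$ completes the argument.
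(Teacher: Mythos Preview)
Your proposal is correct and follows essentially the same route as the paper: parameterize the quadratrix as the graph $y = x\cot(\text{angle})$, set this equal to the line $y = ax+b$, and divide through by $x$. The paper's three-line argument starts from the normalization $y = x\cot(\pi x/2)$ and then silently absorbs the $\pi/2$ into a rescaling of the variable at the last step, which is exactly the normalization issue you flagged and handled in your point~(i); your version is in fact more careful about the $x=0$ exclusion and the sign bookkeeping than the paper's own proof.
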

\begin{proof}
	\begin{align*}
		ax+b&=x\cot(\pi x/2)\\
		a+\frac{b}{x}&=\cot(\pi x /2) \\
		a+\frac{b}{x}&=\cot(x)
	\end{align*}
\end{proof}

Furthermore, some basic plots seem to indicate that there is a trove of complex solutions to even the simple base case of $\sin(x)-ax$
\begin{figure}[h!]
	\center
	\includegraphics[width=.6\textwidth]{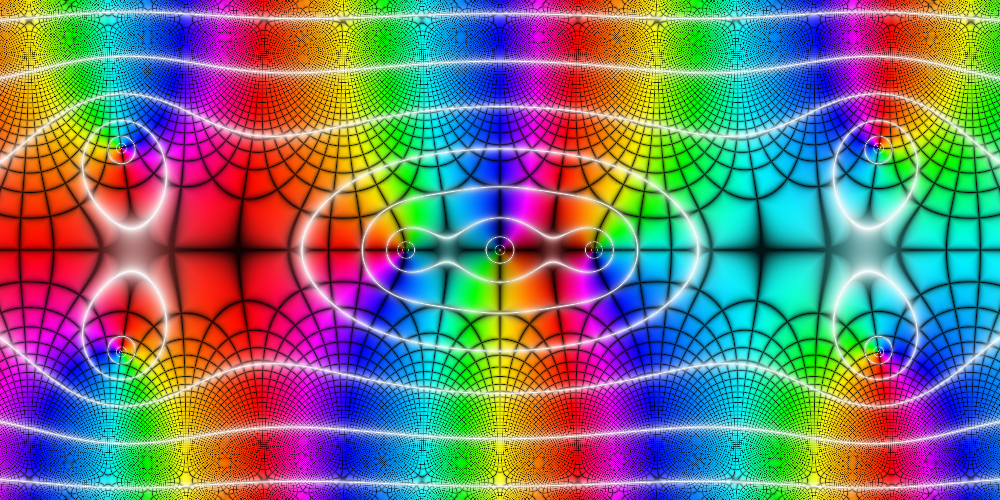}
	\caption[]{$\sin(x)-x/2$ in the complex plane}
	\label{fig:sinhalf} 
\end{figure}
Furthermore, the where $\sin(x)$ and $ax$ intersect and are tangent has a fascinating expression in the complex plane, where the complex solutions combine on the real line into a solution with a multiplicity of 2, before separating on the real line as seen in Figure \vref{fig:sintangent}.
\begin{figure}[h!]
	\center
	\includegraphics[width=.6\textwidth]{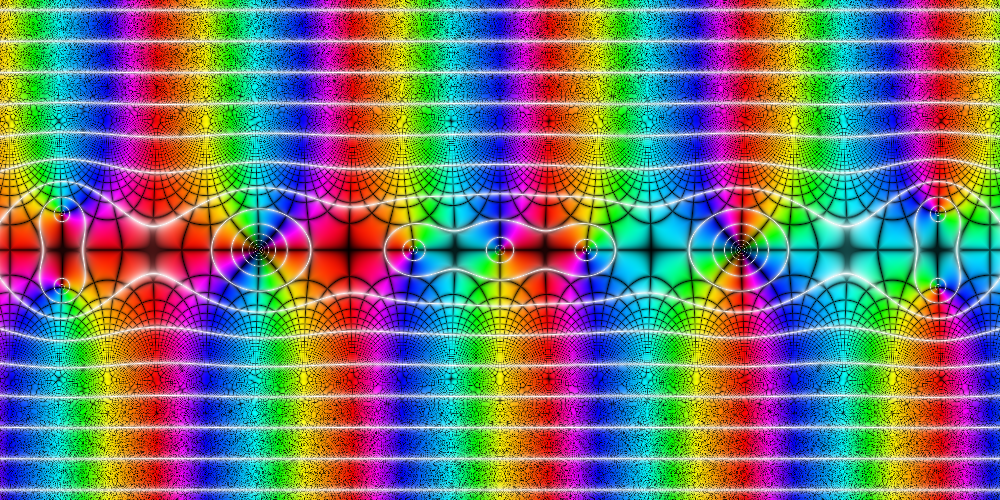}
	\caption[]{$\sin(x)-0.1283x$ in the complex plane}
	\label{fig:sintangent} 
\end{figure}
What's interesting is that the characteristic equations for the quadratrix seem to buck the trend and only have solutions appearing on the real line.
\begin{figure}[h!]
	\center
	\includegraphics[width=.6\textwidth]{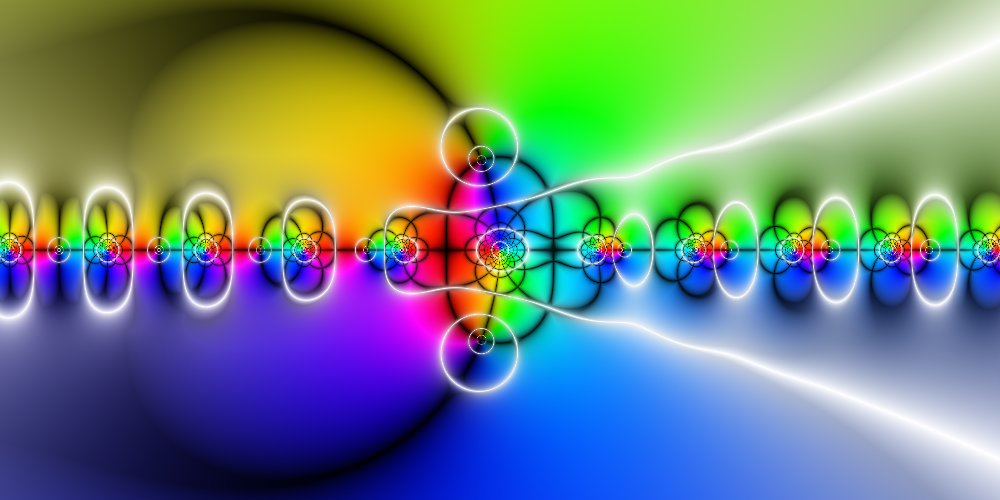}
	\caption[]{$\cot(x)-.2-3/x$ in the complex plane}
	\label{fig:quad1} 
\end{figure}

\section*{Conclusion}
Despite the best efforts of the author some important questions on this matter remain unresolved, 
\begin{conjecture}
	Does the field created by allowing arbitrary partition of angles $\mathcal{P}$ completely encapsulate the algebraic numbers in $\mathcal{T}_1$
\end{conjecture}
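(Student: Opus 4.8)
The plan is to reduce the conjecture to a single statement about exponentials and logarithms of algebraic numbers, and then attack that statement with transcendence theory. As a preliminary remark, every angle-partition step divides some argument $\arg z$ with $z\in\mathcal{T}_1$ and $|z|=1$ by an integer $n$, and the resulting direction $z^{1/n}$ lies in $\mathcal{T}_1$ by the lemma that $n$th roots of trigonometric points of modulus $1$ are trigonometric; Euclidean steps preserve membership in $\mathcal{T}_1$ (Lemma \ref{lem:t1constructible} together with closure of $\mathcal{T}_1$ under ruler-and-compass operations). Hence $\mathcal{P}\subseteq\mathcal{T}_1$, so the conjecture is really the assertion that the \emph{algebraic} part of $\mathcal{T}_1$ is no bigger than that of $\mathcal{P}$, i.e. $\mathcal{T}_1\cap\overline{\mathbb{Q}}\subseteq\mathcal{P}$.

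Next I would give an internal description of $\mathcal{P}\cap\overline{\mathbb{Q}}$. Let $\mathcal{R}$ be the smallest subfield of $\overline{\mathbb{Q}}$ containing $\mathbb{Q}$ that is closed under taking square roots and under $\zeta\mapsto\zeta^{1/n}$ for every algebraic $\zeta$ with $|\zeta|=1$ and every $n\ge 1$. A routine induction on the length of a construction shows $\mathcal{P}\cap\overline{\mathbb{Q}}=\mathcal{R}$: each such root is realizable by a partition or a Euclidean step, and conversely every Euclidean step on algebraic data is a field operation or a square root, while every partition step normalizes (by factoring out the modulus) to $\zeta\mapsto\zeta^{1/n}$ on a modulus-$1$ algebraic number. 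Thus the conjecture is equivalent to the single inclusion $\mathcal{T}_1\cap\overline{\mathbb{Q}}\subseteq\mathcal{R}$.

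The heart of the matter is then an induction over the generation of $\mathcal{T}_1$, carrying a transcendence-degree invariant. At each stage one has a finitely generated subfield $K\subseteq\mathcal{T}_1$, and new elements arise from field operations, square roots, $\alpha\mapsto e^{i\alpha}$ for real $\alpha\in K$, and $\beta\mapsto\log\beta$ for $|\beta|=1$, $\beta\in K$. The two sub-claims to establish are: (i) $\log\beta=i\arg\beta$ is never algebraic unless $\beta=1$, by Lindemann–Weierstrass, so logarithms never directly introduce algebraic numbers; and (ii) if $\alpha\in\mathcal{T}_1$ is real and $e^{i\alpha}$ is algebraic, then $e^{i\alpha}\in\mathcal{R}$, where by induction the arguments $\arg\xi_1,\dots,\arg\xi_m$ out of which $\alpha$ is built by field operations correspond to algebraic modulus-$1$ numbers $\xi_j$ already lying in $\mathcal{R}$. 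The content of (ii) is that the only field combinations of arguments that can land back inside $\arg(\overline{\mathbb{Q}}\cap S^1)$ are "divide by an integer" and "add", i.e. exactly the $\zeta\mapsto\zeta^{1/n}$ and multiplicative operations available in $\mathcal{R}$; in particular a genuinely nonlinear combination of independent arguments, e.g. $\arg\xi_1\cdot\arg\xi_2$, cannot collapse to an algebraic value. This is precisely what a Schanuel-type estimate delivers: for $\theta=\arg\xi$ transcendental the set $\{i\theta,i\theta^2\}$ is $\mathbb{Q}$-linearly independent, so $\operatorname{trdeg}_{\mathbb{Q}}\mathbb{Q}(\theta,\theta^2,e^{i\theta},e^{i\theta^2})\ge 2$, which forces $e^{i\theta^2}\notin\overline{\mathbb{Q}}$.

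The main obstacle is exactly sub-claim (ii): ruling out \emph{all} hidden algebraic relations among the countably many arguments that can accumulate inside $\mathcal{T}_1$ appears to require Schanuel's conjecture in full strength, since no unconditional multidimensional transcendence statement of this generality is known. A realistic fallback is to prove the conjecture conditionally on Schanuel, or to prove unconditionally the special case in which the construction uses only a bounded number of $\exp$/$\log$ alternations — the linear sub-case being accessible through Baker's theorem on linear forms in logarithms, and the first nonlinear case through the Gelfond–Schneider circle of ideas. Even granting the transcendence input, making the induction airtight — in particular controlling how square-root extensions interleave with the $\exp$/$\log$ tower without enlarging $\mathcal{R}$ — is the part I expect to absorb most of the effort.
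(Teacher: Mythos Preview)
This statement is labeled a \emph{conjecture} in the paper, and the paper does not prove it; the surrounding text in the Conclusion explicitly lists it as unresolved. So there is no paper proof to compare against, only the author's informal commentary on why the question is hard.

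Your plan is well aligned with that commentary. The paper isolates exactly the same obstruction you do: after reducing $\mathcal{T}_1$-constructions to iterated $\exp$/$\ln$ on modulus-$1$ numbers, the maps $f_q(z)=e^{q\ln z}$ with rational $q$ are what $\mathcal{P}$ captures, and the difficulty is ruling out that something like $g(a,b,c)=e^{\ln(a)\ln(b)/\ln(c)}$ can ever land on an algebraic value outside $\mathcal{P}$. Your sub-claim~(ii) is precisely this, phrased in terms of arguments rather than logarithms, and your example $e^{i\,\arg\xi_1\cdot\arg\xi_2}$ is the paper's $g$ specialized to the unit circle.

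The genuine gap is the one you yourself name: sub-claim~(ii) is not a lemma you can prove, it is Schanuel's conjecture (or a substantial fragment of it). Baker's theorem controls only $\mathbb{Q}$-linear forms in logarithms, and Gelfond--Schneider only a single quotient $\log\alpha/\log\beta$; neither reaches a genuine product of two arguments, let alone the higher iterates that appear after several $\exp$/$\log$ alternations interleaved with square roots. What you have written is therefore a correct reduction of the conjecture to a well-known open problem in transcendence theory, together with an honest assessment that the unconditional statement is out of reach --- which is exactly where the paper leaves matters. Present it as a research outline or a conditional result, not as a proof.
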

The strategy used for this involved turning our geometric constructions into algebra and then using our base algebraic objects to construct the following class of functions that take in algebraic numbers and output algebraic numbers.
\begin{gather*}
	f_q: \mathbb{A} \rightarrow \mathbb{A} \\
	f_q(z)= e^{q \cdot \ln(z)}
\end{gather*}
However, saying that this class of functions generates all algebraic numbers would involve proving that there are no other "ways" of making algebraic numbers. For example, consider the function

\begin{gather*}
	g: \mathbb{A}\times\mathbb{A}\times\mathbb{A} \rightarrow \mathbb{C}\\
	g(a,b,c)=e^{\frac{\ln(a)\ln(b)}{\ln(c)}}
\end{gather*}
We know that this function does produce algebraic output when $c$ equals 0. Or when either $a$ or $b$ are expressible as $c$ raised to a rational power. (Actually plugging in values gives you results that look like $2^{\log_3(5)}$.) There might very well be a theorem in transcendental number theory that could easily resolve this.

Likewise, there are a bunch of additional questions that could be answered about allowing general intersections, namely.
\begin{conjecture}
	Are all the fields allowing general intersections with the transcendental curves (trig functions, Quadratrix of Hippias, Archemedian Spiral) identical? (Do they form a larger $\mathcal{T}_2$)
\end{conjecture}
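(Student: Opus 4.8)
The plan is to run the same ``mutual simulation'' argument that identified each of the restricted--intersection fields with $\mathcal{T}_1$, but now at the level of the three families of transcendental characteristic equations. First I would pin down, for each curve, the equation governing its intersection with an arbitrary sloped line not through the origin, in a normalized form: for $\sin$/$\cos$ this is $\sin x = ax + b$; for the quadratrix, rescaled to $(\theta,\theta\cot\theta)$, it is $\theta\cos\theta = (a\theta+b)\sin\theta$; and for the Archimedean spiral $(\theta\cos\theta,\theta\sin\theta)$ it is $\theta\sin\theta - a\theta\cos\theta = b$, i.e. $\theta(\sin\theta - a\cos\theta) = b$ (the spiral's ``general'' field is not explicitly defined in the paper, so defining it is part of the task). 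Substituting $w = e^{i\theta}$ and using $\cos\theta = \tfrac12(w+w^{-1})$, $\sin\theta = \tfrac1{2i}(w-w^{-1})$, $\theta = -i\ln w$, all three become equations of the shape $F(x, e^{ix}) = 0$ with $F$ a low-degree rational expression over $\mathcal{T}_1$ in two formal variables; this is the natural common ambient, since Theorem~\ref{thm:exp} already makes $x \leftrightarrow e^{ix}$ a $\mathcal{T}_1$--constructible correspondence.

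Second, I would dispatch the cheap coincidences. Since $\cos x = \sin(x+\pi/2)$ and $\pi \in \mathcal{T}_1$, the $\sin$--line field equals the $\cos$--line field. Since the quadratrix and spiral points on lines \emph{through the origin} already lie in $\mathcal{T}_1$ (established in the quadratrix and spiral sections), the only genuinely new content in every case is the off--origin lines. I would then try to realize the spiral equation inside the quadratrix field and conversely: in the $w = e^{i\theta}$ picture both are of the form $\theta\cdot L_1(w) = L_2(w)$ with $L_i$ Laurent polynomials of degree at most one, so the hope is that after clearing denominators they differ only by a $\mathcal{T}_1$--invertible change of variables together with the available branch relation $\theta = -i\ln w$, which would give spiral $\leftrightarrow$ quadratrix.

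Third --- and this is where the real difficulty lies --- one must decide whether the $\sin$--line equation $\sin x = ax+b$ (which after the substitution reads ``Laurent polynomial in $w$ equals a $\mathcal{T}_1$--linear function of $x$'', with no $x\cdot w$ coupling) can generate, over $\mathcal{T}_1$ and under re--intersection, the quadratrix/spiral--type equations (where $x$ multiplies a nonconstant Laurent polynomial in $w$ and $\cos$ and $\sin$ are genuinely coupled). I do not expect a slick substitution here: unlike the $\mathcal{T}_1$ situation there is no Lindemann--Weierstrass--type structure theorem saying that solutions of one mixed transcendental equation are algebraically expressible through solutions of another, and the solution sets carry qualitative invariants --- compare the plots in Figures~\ref{fig:sinhalf}--\ref{fig:quad1}, where the quadratrix roots stay on the real line while the $\sin$--line roots migrate into $\mathbb{C}$. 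So the honest outcome of the plan is a dichotomy: either (a) produce an explicit common normal form $F(x,e^{ix}) = 0$ and a transfer lemma placing each family's roots in the field generated by the others, which would prove the three fields coincide and justify the name $\mathcal{T}_2$; or (b) isolate a transcendence--theoretic obstruction --- most plausibly a Schanuel--type statement about the analytic variety cut out by $F$, or a differential--algebraic invariant of the solution branches --- separating the $\sin$--line field from the quadratrix field. My expectation is that the easy coincidences (all trigonometric curves among themselves, and spiral $\leftrightarrow$ quadratrix) go through, and that ``trigonometric--line versus quadratrix--line'' is the genuinely open core of the conjecture.
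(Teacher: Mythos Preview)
This statement is listed in the paper as an open \emph{conjecture} in the Conclusion, not as a theorem; the paper gives no proof and in fact says explicitly ``I suspect that if it is true it would be very hard to prove.'' So there is no paper proof to compare your proposal against.

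Your write-up is honest about this: it is not a proof but a research plan, and you yourself flag the third step as the place where you ``do not expect a slick substitution'' and where the outcome is a dichotomy rather than a result. That assessment is consistent with the paper's own stance. A few concrete remarks on the plan itself: the ``cheap coincidences'' you list ($\sin$ versus $\cos$, and lines through the origin for the spiral and quadratrix) are fine and already implicit in the $\mathcal{T}_1$ sections. The hoped-for spiral~$\leftrightarrow$~quadratrix step, however, is not obviously easier than the trig--quadratrix step: writing both as $\theta\cdot L_1(w)=L_2(w)$ with $w=e^{i\theta}$ does put them in a common \emph{shape}, but a ``$\mathcal{T}_1$--invertible change of variables'' between the two solution sets would have to respect the transcendental coupling $\theta=-i\ln w$, and you have given no mechanism for producing such a change --- this is the same obstacle you correctly identify in step three, so you should not expect step two to go through any more readily. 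In short, your proposal does not close the gap, and the paper does not claim to either; the conjecture remains open.
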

I suspect that if it is true it would be very hard to prove, but on the other hand, it would be very pretty if it was true.
\begin{conjecture}
	Does allowing general intersections in any of the transcendental curves allow the construction of algebraic numbers that are impossible in $\mathcal{T}_1$
\end{conjecture}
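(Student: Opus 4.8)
The plan is to argue that the answer is \emph{no}: I expect that allowing general intersections adds no algebraic numbers beyond those already in $\mathcal{T}_1$. Writing $\mathcal{T}_2$ for the field generated by $\mathcal{T}_1$ together with all general (sloped-line, arbitrary-circle) intersections with the three transcendental curves, the target statement is $\mathcal{T}_2 \cap \overline{\mathbb{Q}} = \mathcal{T}_1 \cap \overline{\mathbb{Q}}$. The first move is the reduction already used throughout the ``Geometric Definitions'' section: by the parameterizations of the quadratrix and the Archimedean spiral, a general intersection with any of the three families reduces to solving an equation of the analytic type $\sin x = ax+b$ (for the quadratrix, $\cot x = a + b/x$; for the spiral, a mixed equation of the same flavour), so it suffices to control the field obtained by repeatedly adjoining real solutions of such equations.

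Next I would realize $\mathcal{T}_2$ as the union of a tower $\mathcal{T}_1 = K_0 \subseteq K_1 \subseteq K_2 \subseteq \cdots$, where $K_{n+1}$ is generated by $K_n$ together with every real solution $x$ of every equation $\sin x = ax+b$ with $a,b \in K_n$, and prove by induction on $n$ that $K_n \cap \overline{\mathbb{Q}} \subseteq \mathcal{T}_1$. Given $\alpha \in K_{n+1}$ algebraic, one writes $\alpha$ as a rational expression over $K_n$ in finitely many new roots $x_1,\dots,x_m$, each satisfying $\sin x_j = a_j x_j + b_j$ with $a_j,b_j \in K_n$. A few degenerate families must be carved out as harmless: horizontal lines ($a_j = 0$), lines passing through a point of the curve whose coordinates already lie in $K_n$, and the tangency / ``$x = b$'' configurations from the corollaries above; in each of these the construction is forced and contributes nothing new. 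The heart of the argument is then to show that in every \emph{non}-degenerate case the numbers $x_1,\dots,x_m$ together with $e^{ix_1},\dots,e^{ix_m}$ are algebraically independent over $K_n$, so that a rational function of them over $K_n$ can be algebraic over $\mathbb{Q}$ only when the construction collapses into one of the degenerate families, whence $\alpha \in \mathcal{T}_1$.

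The main obstacle is exactly that independence statement, which lies beyond current transcendence theory: it is a Schanuel-type assertion about the $x_j$ and the $e^{ix_j}$, and even its smallest instance is open — it is not known whether the unique positive real root $x_0$ of $\sin x = x/2$ is algebraically independent from $\pi$, or even irrational in any strong quantitative sense. I therefore expect the realistic deliverable to be a \emph{conditional} theorem: assuming Schanuel's conjecture, general intersections with the transcendental curves add no new algebraic numbers to $\mathcal{T}_1$. Unconditionally, the best one can currently record is the weaker statement packaged in Theorem~\ref{thm:t2tran1} and its corollaries: a \emph{single} intersection with algebraic parameters yields only transcendental numbers, so the sole escape route to a new algebraic number is through iterated constructions that feed transcendental parameters back into the curves — and that is precisely the door Schanuel's conjecture would shut.

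Two side remarks would accompany this. First, the quadratrix deserves separate treatment: the numerical evidence in Figure~\ref{fig:quad1} that $\cot x = a + b/x$ has only real roots, if made rigorous, would both eliminate stray complex solutions from the bookkeeping and suggest that the quadratrix case might yield to more elementary real-analytic arguments than the $\sin$-line case, so isolating and settling that ``all real roots'' phenomenon is a clean sub-problem worth pursuing on its own. Second, the functions $f_q(z) = e^{q\ln z}$ and $g(a,b,c) = e^{\ln(a)\ln(b)/\ln(c)}$ flagged in the Conclusion provide a consistency check: their only algebraic special values are (rational) powers of algebraic numbers — all of which $\mathcal{T}_1$ already contains — so no cheap counterexample is visible, and the expectation that the answer is \emph{no} is compatible with everything currently provable.
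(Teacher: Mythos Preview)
The paper does not prove this statement at all: it is listed as an open \emph{conjecture} in the Conclusion, with the remark ``There might very well be a clever trick in the algebra to allow this and if any reader would be interested in continuing this work further I would recommend they consider starting here.'' So there is no proof in the paper to compare your proposal against, and in fact the author's informal leaning is the \emph{opposite} of yours --- the paper leaves open the possibility that general intersections \emph{do} produce new algebraic numbers.

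Your write-up is therefore a research outline, not a proof, and you are candid about this: the entire argument hinges on an algebraic-independence statement for the $x_j$ and $e^{ix_j}$ that you correctly identify as a Schanuel-type assertion, currently unprovable even in the single-variable case $\sin x = x/2$. That is the genuine gap, and it is not a repairable one with present tools. What you have is a clean conditional statement (``assuming Schanuel, $\mathcal{T}_2 \cap \overline{\mathbb{Q}} = \mathcal{T}_1 \cap \overline{\mathbb{Q}}$'') together with a useful structural reduction of the problem to that independence question; that is more than the paper offers, but it does not settle the conjecture, and you should present it as a heuristic and a conditional result rather than as a proof.
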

There might very well be a clever trick in the algebra to allow this and if any reader would be interested in continuing this work further I would recommend they consider starting here.

\section*{Acknowledgements}

I would like to sincerely thank Dr. John Carter for advising me on this project and helping me through the publication process, as well as teaching the History of Mathematics class where the quadratrix was first introduced to me.

\printbibliography

\end{document}